\def\R{\mathbb R}
\def\N{\mathbb N}
\def\Q{\mathbb Q}
\def\C{\mathbb C}
\def\1{{\mathbbm 1}}
\def\supp{{\mathrm {supp}}}
\def\dist{{\mathrm {dist}}}
\def\conv{{\mathrm {conv}}}
\renewcommand\Re{{\mathrm {Re}}\,}
\renewcommand\Im{{\mathrm {Im}}\,}
\def\Arg{{\mathrm {Arg}}\,}
\let\O=\Omega
\let\e=\varepsilon
\let\vp=\varphi
\let\vt=\vartheta
\let\t=\tilde
\let\ol=\overline
\let\ul=\underline
\let\mc=\mathcal
\let\di=\displaystyle
\def\sse{\ \Leftrightarrow\ }
\def\pe{principal eigenvalue}
\newcommand{\su}[2]{\genfrac{}{}{0pt}{}{#1}{#2}}
\newcommand{\rest}[2]{\!\left.\!\hspace{1pt}#1\right|_{#2}}
\def\thm#1{Theorem \ref{thm:#1}}
\def\seq#1{(#1_n)_{n\in\N}}
\def\limn{\lim_{n\to\infty}}
\newenvironment{formula}[1]{\begin{equation}\label{#1}}
                       {\end{equation}\noindent}
\def\Fi#1{\begin{formula}{#1}}
\def\Ff{\end{formula}\noindent}
\newtheorem{theorem}{Theorem}[section]
\newtheorem{proposition}[theorem]{Proposition}
\newtheorem{lemma}[theorem]{Lemma}
\theoremstyle{definition}
\newtheorem{definition}[theorem]{Definition}
\title{\bf The shape of expansion induced by a line with fast diffusion in Fisher-KPP equations}
\author{Henri {\sc Berestycki}$^{\hbox{a }}$,  
Jean-Michel {\sc Roquejoffre}$^{\hbox{b }}$, Luca {\sc Rossi}$^{\hbox{c }}$\\
\footnotesize{$^{\hbox{a }}$ Ecole des Hautes Etudes en Sciences Sociales}\\
\footnotesize{ CAMS, 54, bd Raspail F-75270 Paris, France}\\
\footnotesize{$^{\hbox{b }}$ Institut de Math\'ematiques de Toulouse,
Universit\'e Paul Sabatier}\\
\footnotesize{118 route de Narbonne, F-31062 Toulouse Cedex 4, France}\\
\footnotesize{$^{\hbox{c }}$Universit\`a degli Studi di Padova}\\
\footnotesize{Dipartimento di Matematica, Via Trieste, 63 -
35121 Padova, Italy}\\
}
\begin{document}

\maketitle

\def\W{\mc{W}}

%%%%%%%%%%%%%%%%%%%%%%%%%%%%%%%%%%%%%%%%%%%%%%%%%%%%%%%%%%%%%%%%%%

\begin{abstract}
We establish a new property of Fisher-KPP type propagation in a
plane, in the presence of a line with fast diffusion. We prove that the line 
enhances the asymptotic speed of propagation
in a cone of directions. Past the critical angle given by this cone, the 
asymptotic speed of
propagation coincides with the classical Fisher-KPP invasion speed. Several qualitative
properties  are further derived, such as the limiting behaviour when the diffusion on the line goes
to infinity.
\end{abstract}

\section{Introduction}

In \cite{BRR2} we introduced a new model to
describe biological invasions in the
plane when a strong diffusion takes place on a straight line.  
In this model, we consider a coordinate system on $\R^2$ with the $x$-axis
coinciding with the line, referred to as ``the road''. The rest of the plane is
called ``the field''.
For given time $t\geq0$, we let $v(x,y,t)$ denote the density
of the population at the point $(x,y)\in\R^2$ of the field
and $u(x,t)$ denote the density at the point $x\in\R$ of the road. 
Owing to the symmetry of the problem, one can restrict the field to the upper
half-plane $\O:=\R\times(0,+\infty)$.
There, the dynamics is assumed to be given by a standard Fisher-KPP equation 
with diffusivity~$d$, whereas, on the road, there is no reproduction nor 
mortality
and the diffusivity is given by another constant $D$. We are especially
interested in the case
where $D$ is much larger than $d$.
On the vicinity of the road there is a constant exchange between the densities $u$, and the one in the field adjacent
to the road, $\rest{v}{y=0}$, given by two rates $\mu,\nu$ respectively. That is, a proportion $\mu$ of $u$ jumps off the road into the field while a proportion $\nu$ of $\rest{v}{y=0}$ goes onto the road.

This model gives rise to the following system:
\Fi{Cauchy}
\begin{cases}
\partial_t u-D \partial_{xx} u= \nu\rest{v}{y=0}-\mu u, &
x\in\R,\
t>0\\
\partial_t v-d\Delta v=f(v), & (x,y)\in\O,\ t>0\\
-d\partial_y\rest{v}{y=0}=\mu u-  \nu\rest{v}{y=0}, & x\in\R,\ t>0,
\end{cases}
\Ff
where $d,D,\mu,\nu$ are positive constants and $f\in C^1([0,+\infty))$
satisfies the usual KPP type assumptions:
$$f(0)=f(1)=0,\quad f>0\text{ in }(0,1),\quad f<0\text{ in
}(1,+\infty),\quad f(s)\leq f'(0)s \text{ for }s>0.$$
These hypotheses will always be understood in the following without further
mention.
We complete the system with initial conditions:
$$
\rest{u}{t=0}=u_0\quad\text{in }\R,\qquad
\rest{v}{t=0}=v_0\quad\text{in }\O,
$$
where $u_0$, $v_0$ are always assumed to be nonnegative, bounded and continuous.
The existence of a classical solution for this Cauchy problem has been derived 
in
\cite{BRR2}, together with the weak and strong comparison principles. 

Let $c_K$ denote the KPP spreading velocity (or invasion speed) \cite{KPP} in
the field:
$$
c_K=2\sqrt{df'(0)}.
$$
This is the asymptotic speed at which the population would spread in any direction in the open
space - i.e., when the road is not present (see \cite{AW0}, \cite{AW}).
\\
The question that we treat in this paper is the following. In \cite{BRR2} (c.f.~also Theorem 1.1 in
\cite{BRR3}) we proved that there exists
$c_*\geq c_K$ such that, if $(u,v)$ is the solution of \eqref{Cauchy} 
emerging from $(u_0,v_0)\not\equiv(0,0)$, there holds that
\begin{equation}
\label{e1.5}
\begin{array}{rll}
&\displaystyle \forall c>c_*,\quad\
\lim_{t\to+\infty}\sup_{\su{|x|>ct}{y\geq0}}
|(u(x,t),v(x,y,t))|=0,\\
&\displaystyle \forall c<c_*,\ a>0,\quad\ 
\lim_{t\to+\infty}\sup_{\su{|x|<ct}{0\leq 
y<a}}|(u(x,t),v(x,y,t))-(\nu/\mu,1)|=0.
\end{array}
\end{equation}
Moreover, $c_*>c_K$ if and only if $D>2d$. In other words, the solution spreads 
at velocity $c_*$ in the direction of the road.

Clearly, the convergence of $v$ to $1$ in the second limit cannot hold uniformly
in~$y$. The purpose of this
paper is precisely to understand the asymptotic limits in various directions, 
and this turns out to be a rather  delicate issue.
Here is one of our main results.
\begin{theorem}
\label{t1.1}
There exists $w_*\in C^1([-\pi/2,\pi/2])$ such that
$$\forall c>w_*(\vt),\quad
\lim_{t\to+\infty}v(x_0+ ct\sin\vt, y_0+ ct\cos\vt,t)=0,$$
$$\forall0\leq c<w_*(\vt),\quad\lim_{t\to+\infty}v(x_0+ct\sin\vt,y_0+ 
ct\cos\vt,t)=1,$$
locally uniformly in $(x_0, y_0)\in\ol\O$ and uniformly in 
$(c,\vt)\in\R_+\times[-\pi/2,\pi/2]$ such that
$|c-w_*(\vt)|>\e$, for any given $\e>0$.

Moreover, $w_*\geq c_K$ and, if $D> 2d$,  there is $\vt_0\in(0,\pi/2)$ such 
that $w_*(\vt)>c_K$ if and only if $|\vt|>\vt_0$.
\end{theorem}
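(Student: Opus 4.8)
The plan is to recognize $w_*$ as an explicit geometric-optics (Freidlin--G\"artner type) speed built from the road invasion speed $c_*$ of \eqref{e1.5}, and then to prove two matching one-sided bounds. For $\vt\in[-\pi/2,\pi/2]$ I would set
$$w_*(\vt):=\Big(\inf_{\eta\in\R}\Big[\frac{|\eta|}{c_*}+\frac1{c_K}\sqrt{(\sin\vt-\eta)^2+\cos^2\vt}\,\Big]\Big)^{-1},$$
the reciprocal of the least ``time'' to reach the unit vector $(\sin\vt,\cos\vt)$ by running along the road at speed $c_*$ up to an abscissa $\eta$ and then crossing the field at speed $c_K$. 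An elementary minimisation --- a Snell-type refraction law --- gives the minimiser $\eta=0$, hence $w_*(\vt)=c_K$, exactly when $|\vt|\le\vt_0:=\arcsin(c_K/c_*)$, while for $\vt_0\le|\vt|\le\pi/2$ it yields the closed form $w_*(\vt)=c_*\big(\sin|\vt|+\tfrac{\sqrt{c_*^2-c_K^2}}{c_K}\cos\vt\big)^{-1}$, which is strictly increasing from $c_K$ at $\vt_0$ to $c_*$ at $\pi/2$ and matches the constant $c_K$ in a $C^1$ fashion at $\vt_0$ (both one-sided derivatives vanish there). Taking $\eta=0$ always gives $w_*\ge c_K$, and when $c_*=c_K$ --- i.e.\ $D\le 2d$, by \eqref{e1.5} --- one has $\vt_0=\pi/2$ and $w_*\equiv c_K$. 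Hence, once the two limits are established, the $C^1$ regularity and all the qualitative assertions follow from this computation together with the equivalence $c_*>c_K\iff D>2d$ recalled from \eqref{e1.5}.

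For the lower bound (convergence to $1$ when $c<w_*(\vt)$) I would use the road as a shortcut. Fix $\vt$ and a near-minimiser $\eta_*$ in the formula above. By \eqref{e1.5} the road, together with a strip of the field above it, is invaded up to $|x|\sim c_*t$, so by time $\sim\frac{c|\eta_*|}{c_*}t$ the state near the point $(c\eta_*t,0)$ is already close to $(\nu/\mu,1)$. Anchoring a Fisher--KPP invasion of the field there --- i.e.\ bounding $v$ from below by a compactly supported expanding subsolution in $\O$ issued from $(c\eta_*t,0)$, the coupling with the road equation and the exchange condition at $y=0$ being accommodated by the comparison machinery of \cite{BRR2} --- one gets $v\approx1$ on a half-disk of radius $\sim c_K\big(1-\tfrac{c|\eta_*|}{c_*}\big)t$, which for $t$ large contains $\big(x_0+ct\sin\vt+O(1),\,y_0+ct\cos\vt+O(1)\big)$ precisely when $c\big[\tfrac{|\eta_*|}{c_*}+\tfrac1{c_K}\sqrt{(\sin\vt-\eta_*)^2+\cos^2\vt}\big]<1$, i.e.\ for every $c<w_*(\vt)$. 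The weak comparison principle of \cite{BRR2} then forces $v\to1$. Since $\eta_*$ and all these estimates can be taken locally uniformly in $(x_0,y_0)$ and uniformly for $\vt$ in the compact interval, and $|c-w_*(\vt)|>\e$ leaves a fixed margin, the convergence holds in the uniform form stated (using continuity of $w_*$ and compactness of $[-\pi/2,\pi/2]$).

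For the upper bound (convergence to $0$ when $c>w_*(\vt)$) I would work with the linearised system --- legitimate because $f(s)\le f'(0)s$, so its solutions are supersolutions of \eqref{Cauchy} that need only dominate the initial datum, which we take compactly supported (or decaying, as in \cite{BRR2}). The idea is to produce, for each direction, an exponential supersolution $\bar v=A\,e^{-\alpha x-\beta y+\sigma t}$ in the field and $\bar u=B\,e^{-\alpha x+\sigma t}$ on the road, with $\sigma=d(\alpha^2+\beta^2)+f'(0)$ and with $\alpha,\beta,A,B$ linked by the road equation and by the exchange inequality $-d\partial_y\rest{\bar v}{y=0}\ge\mu\bar u-\nu\rest{\bar v}{y=0}$; admissibility of $(\alpha,\beta)$ then reads $(d\beta+\nu)\big(d\beta^2-(D-d)\alpha^2+f'(0)+\mu\big)\ge\mu\nu$, and the infimum of the ratio $\sigma/(\alpha\sin\vt+\beta\cos\vt)$ over this admissible set is exactly the Legendre dual of the geometric-optics formula, hence equals $w_*(\vt)$. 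Thus for $c>w_*(\vt)$ one finds an admissible triple with $\sigma<c(\alpha\sin\vt+\beta\cos\vt)$, the corresponding $\bar v$ tends to $0$ along the ray, and comparison forces $v\to0$, uniformly as before. I expect this construction to be the main obstacle: the supersolution must at once satisfy the road equation and the Robin-type interface inequality in a non-axis-aligned geometry, decay throughout the whole cone of directions near $\vt$ (so as to yield the uniform statement rather than a single ray), and dominate the initial datum; and checking that the two relations among the four exponents can be balanced so as to reproduce exactly the speed $w_*(\vt)$ --- in particular across the transition angle $\vt_0$, where the pure-field and road-mediated mechanisms exchange roles --- is the delicate point. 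Together with the lower bound, this pins $w_*$ to the formula above, and the qualitative conclusions follow from the elementary analysis recorded in the first paragraph. $\hfill\square$
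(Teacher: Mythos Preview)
Your proposed $w_*$ is not the correct spreading speed: it describes the boundary of the set $\ul{\mc{W}}=\conv\big((\ol B_{c_K}\cap\ol\O)\cup[-c_*,c_*]\times\{0\}\big)$, and the paper proves in Theorem~\ref{thm:main}(ii) and Proposition~\ref{pro:W} that the true expansion shape $\mc{W}$ \emph{strictly} contains $\ul{\mc{W}}$ whenever $D>2d$. In particular your critical angle $\arcsin(c_K/c_*)$ is the quantity the paper calls $\vt_1$, and the actual $\vt_0$ (given by \eqref{theta0}) satisfies $\vt_0<\vt_1$. The paper even anticipates your heuristic in the discussion: ``It would have been tempting to think that $\mc{W}$ coincides with $\ul{\mc{W}}$\dots The fact that the asymptotic expansion shape is actually larger than this set is remarkable.'' So both one-sided bounds in your outline break down.

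Your lower bound --- reach $(c\eta_* t,0)$ along the road, then launch a KPP half-disk of speed $c_K$ --- is precisely the Huygens construction that produces $\ul{\mc{W}}$; it cannot reach points in $\mc{W}\setminus\ul{\mc{W}}$, so for $\vt_0<|\vt|<\vt_1$ it yields only speed $c_K$ while the true $w_*(\vt)$ is already larger. The paper's lower bound (Section~\ref{sec:subsolutions}) instead builds, for each $\vt>\vt_0$, a compactly supported subsolution moving at speed arbitrarily close to the genuine $w_*(\vt)$, obtained from \emph{complex} exponential solutions of the linearised system via Rouch\'e's theorem and a reflection/truncation argument; only after this does one combine with the road spreading \eqref{e1.5} as you suggest. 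For the upper bound, your exponential supersolutions are the right objects and do give the correct $w_*(\vt)$ --- this is exactly Section~\ref{sec:exp} --- but the assertion that ``the infimum of $\sigma/(\alpha\sin\vt+\beta\cos\vt)$ over the admissible set is the Legendre dual of the geometric-optics formula'' is false: that infimum is the paper's $w_*(\vt)$, which is strictly larger than your geometric-optics speed for $\vt_0<|\vt|\le\pi/2$. The two formulas coincide only in the degenerate case $D\le 2d$.
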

\noindent In other words, this theorem provides the spreading velocity in every 
direction $(\sin \vt,\cos\vt)$, and reveals a critical angle phenomenon: the 
road influences 
the propagation on the field much further than just in the horizontal direction.
In Section~\ref{sec:main}, we state a slightly more general result, 
\thm{main}.
% 
% We establish here the existence of an asymptotic speed of spreading in 
% any given direction $\xi$. Namely, the existence of a quantity $w^*$ such that
% $$\forall c>w_*,\quad
% \lim_{t\to+\infty}v(z_0+ ct\,\xi,t)=0,$$
% $$\forall0\leq c<w_*,\quad\lim_{t\to+\infty}v(x_0+ct\sin\vt,y_0+ 
% ct\cos\vt,t)=1,$$
% locally uniformly in $z_0$. The asymptotic speed $w^*$ depends on the direction 
% $\xi$, and the above limits actually hold uniformly in $\xi$ and in $c$ bounded 
% away from $w^*$.
% We further show that in the case where the road enhances the invasion speed in 
% the horizontal direction, i.e.~when $D>2d$, it also enhances the speed in a 
% cone of directions around the horizontal one. Outside this cone the road has no 
% effect on the asymptotic speed, which coincides with the standard one $c_K$.
%  below.
% They reveal a critical angle phenomenon and show that the road influences the 
% propagation on the field much further than just in the horizontal direction.

The paper is organised as follows. In Section \ref{sec:main} we state the main 
results and discuss them. In Section \ref{sec:exp} 
we compute the planar waves of system \eqref{Cauchy} linearised around $v\equiv 
0$. In 
Section \ref{sec:subsolutions}, we construct compactly supported subsolutions to 
\eqref{Cauchy}, based on the already computed planar waves. This is perhaps 
the most technical part of the paper, but which yields a lot of of information 
about the system. The main result, that is, the asymptotic spreading velocity 
in every direction, is proved in Section \ref{sec:spreading}. Section 
\ref{sec:w*} 
is devoted to further properties of the asymptotic speed in therms of the angle 
of the spreading directions with the road. Finally, Section \ref{sec:T} 
describes the modifications that should be made when further effects, namely 
transport and mortality on the road, are included. A comparison result between 
generalised sub and supersolutions is given in the appendix.

% This, roughly speaking, means that solutions $(u,v)$ emerging from
% nontrivial, compactly supported initial data converge as $t\to+\infty$ to the
% unique positive stationary solution
% $(\nu/\mu,1)$. Moreover, there is an asymptotic speed in the direction of
% the line at which this convergence takes place. This, roughly speaking, means
% that on points moving with speed $c$ in the direction $(\pm 1,0)$, $(u,v)$
% converges to $(0,0)$ if $c$ is larger than this asymptotic speed, whereas it
% converges to $(\nu/\mu,1)$ if $c$ is smaller.
% The precise statement will be reclaimed in the next section. Finally, 

%%%%%%%%%%%%%%%%%%%%%%%%%%%%%%%%%%%%%%%%%%%%%%%%%%%%%%%%%%%%%%%%%%

% Let us start by defining this notion precisely.
\section{Statement of results and discussion}\label{sec:main}
\subsection{The main result and some extensions}
We say that \eqref{Cauchy} admits the {\em asymptotic expansion shape} $\W$ if 
any solution $(u,v)$ emerging from a compactly
supported initial datum $(u_0,v_0)\not\equiv(0,0)$ satisfies
% $$\lim_{t\to+\infty}\sup_{\su{x<(w_*^--\e)t}{y\geq0}}
% |(u(x,t),v(x,y,t))|=0,$$
% $$\lim_{t\to+\infty}\sup_{\su{x>(w_*^++\e)t}{y\geq0}}
% |(u(x,t),v(x,y,t))|=0,$$
% $$\forall a>0,\quad\
% \lim_{t\to+\infty}\sup_{\su{(w_*^-+\e)t<x<(w_*^+-\e)t}{
% 0\leq y<a}}|(u(x,t),v(x,y,t))-(U,V(y))|=0,$$
\Fi{W+}
\forall\e>0,\qquad
\lim_{t\to+\infty}\sup_{\su{(x,y)\in\ol\O}{\dist(\frac1t(x,y),\W)>\e}}
v(x,y,t)=0,
\Ff
\Fi{W-}
\forall\e>0,\qquad\lim_{t\to+\infty}\sup_{\su{(x,y)\in\ol\O}{\dist(\frac1t(x,y),
\ol\O\backslash\W)>\e}}|v(x,y,t)-1|=0.
\Ff
Roughly speaking, this means that the upper level sets of $v$
look approximately like $t\W$ for $t$ large enough. Let us emphasise that the
shape $\W$ does not depend on the particular initial datum -- which is a strong property.
In order  for conditions \eqref{W+}, \eqref{W-} in this definition to genuinely make sense (and not be vacuously
satisfied -- think of the set $\W=\Q^2\cap\Omega$),
we further require that the asymptotic expansion shape coincides with the closure of its
interior.
This condition automatically implies that the asymptotic expansion shape is
unique when it exists.
% 
% Further requiring that the
% asymptotic expansion shape is closed makes it unique by definition, when it
% exists. 

In the sequel, we will sometimes consider the polar coordinate system
with the angle taken with respect to the vertical axis. 
Namely, we will write points in the form $r(\sin\vt,\cos\vt)$.
We now state the main result of this paper.
\begin{theorem}\label{thm:main}
Assume the above conditions on $f$.
\begin{enumerate}[(i)]
\item {\em(Spreading).} 
Problem \eqref{Cauchy} admits an asymptotic expansion shape $\W$. 
\item {\em(Shape of $\mc{W}$).} 
The set $\mc{W}$ is convex and it is of the form
$$\mc{W}=\{r(\sin\vt,\cos\vt)\ :\
-\pi/2\leq\vt\leq\pi/2,\ \ 0\leq r\leq w_*(\vt)\}.$$
Here, $w_*\in C^1([-\pi/2,\pi/2])$, is even and there is $\vt_0\in(0,\pi/2)$
such that
$$w_*=c_K\ \text{ in }[0,\vt_0],\qquad 
w_*'>0\ \text{ in }(\vt_0,\pi/2].$$
 Moreover, $\W$ contains the set
$$\ul{\mc{W}}:=\conv\big((\ol B_{c_K}\cap\ol\O)\cup
[-w_*(\pi/2),w_*(\pi/2)]\times\{0\}\big),$$
and the inclusion is strict if $D>2d$.
\item {\em(Directions with enhanced speed).} If $D\leq2d$ then
$\vt_0=\pi/2$. Otherwise, if $D>2d$, $\vt_0<\pi/2$. Furthermore, as functions
of $D$, $\vt_0$ is strictly decreasing for $D>2d$ and $w_*(\vt)$ is strictly 
increasing if $\vt>\vt_0$. 
\end{enumerate}
\end{theorem}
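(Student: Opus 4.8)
The plan is to build the result around the linearised planar waves of system \eqref{Cauchy} and a matching pair of sub/supersolutions, so that $\W$ emerges as the "Wulff-type" set determined by these waves. First I would analyse, in the spirit of the exponential ansatz, the solutions of the system linearised around $v\equiv0$ of the form $(e^{-\alpha(x-ct)},\, e^{-\alpha(x-ct)}\phi(y))$, or more generally rotated plane waves $e^{-\alpha\xi\cdot((x,y)-ct e)}$ in the field matched to a boundary exchange with the road; the road equation then forces an algebraic relation (a dispersion relation) between the decay vector, the speed, and $D,d,\mu,\nu$. Minimising the speed over admissible exponential rates in each direction $\vt$ produces a candidate function $w_*(\vt)$, and one checks directly that for $D\le 2d$ the road never beats the KPP rate, while for $D>2d$ there is a threshold angle $\vt_0$ past which the road-driven exponential is the slowest-decaying admissible mode, hence governs spreading. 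This is the computation announced for Section~\ref{sec:exp}.

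Next I would establish the upper bound $v\to0$ for $c>w_*(\vt)$ along rays by constructing a supersolution: take a finite (or integral) superposition of the linearised plane waves above, with rates chosen so that the resulting exponentially-decaying function dominates $(u_0,v_0)$ initially and, by the KPP inequality $f(s)\le f'(0)s$, remains a supersolution of the full nonlinear system for all time; comparison (the weak comparison principle from \cite{BRR2}) then confines the bulk of $v$ to the set $\{\dist(\tfrac1t(x,y),\W)\le\e\}$. For the lower bound $v\to1$ strictly inside $\W$, the key input is the family of compactly supported subsolutions announced for Section~\ref{sec:subsolutions}: starting from the plane waves, one truncates and corrects them to produce genuine compactly supported subsolutions of \eqref{Cauchy} that move at any prescribed speed $c<w_*(\vt)$ in direction $\vt$ (and, crucially, translates of such subsolutions tile a neighbourhood of any compact subset of $\interior\W$). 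Placing such a subsolution below the solution at a large time — which is legitimate because $(u,v)$ becomes locally uniformly positive by the strong comparison principle and a spreading argument along the road using \eqref{e1.5} — and letting it evolve, one gets $\liminf v\ge$ the relevant steady state, which a standard sliding/zero-set argument upgrades to $v\to1$. Convexity of $\W$ follows since $\W$ is (a level set of) an infimum of speeds over a convex family of exponential modes, equivalently a support-function description; that $\W=\ol{\interior\W}$ and hence uniqueness come for free from this representation.

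The remaining structural claims — $w_*\in C^1$, $w_*=c_K$ on $[0,\vt_0]$ with $w_*'>0$ beyond, the explicit lower set $\ul{\W}$ and strictness of the inclusion for $D>2d$, and the monotonicity of $\vt_0$ and $w_*$ in $D$ — I would extract by a careful study of the dispersion relation from Section~\ref{sec:exp} as a function of $(\alpha,\vt,D)$: implicit-function-theorem smoothness away from the corner, an explicit comparison at $\vt=0$ showing the road mode ties the KPP mode exactly when $D=2d$, and differentiation in $D$ of the minimised speed (an envelope/Hamilton--Jacobi computation) to get strict monotonicity. The set $\ul{\W}$ is recovered because the "worst" plane waves are the purely vertical KPP one (giving the cap $\ol B_{c_K}\cap\ol\O$) and the horizontal road-driven one (giving the segment on $\{y=0\}$ of half-length $w_*(\pi/2)=c_*$), and convexity forces their convex hull to lie inside $\W$; strictness for $D>2d$ is exactly the enhancement in the cone $|\vt|>\vt_0$. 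The main obstacle, and the technical heart of the paper, is the construction of the compactly supported subsolutions: plane waves of a cooperative-type system coupled through a boundary condition do not truncate as cleanly as scalar KPP fronts, so one must design the cutoff and the lower-order corrector on both $u$ and $v$ simultaneously, control the sign of the residual near the truncation boundary and along $\{y=0\}$, and verify that the family of such subsolutions is rich enough to fill $\interior\W$ in every direction including the delicate regime near $\vt_0$ where two modes compete.
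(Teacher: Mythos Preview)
Your outline is correct and follows the paper's strategy essentially point for point: the plane-wave analysis of Section~\ref{sec:exp} defines $w_*$ and supplies the supersolutions for~\eqref{W+}; the compactly supported subsolutions of Section~\ref{sec:subsolutions}, launched from the road after invoking~\eqref{e1.5} exactly as you suggest, yield~\eqref{W-} via a Liouville argument; and the shape properties of $\W$ (convexity, $C^1$ regularity, the strict inclusion $\W\supsetneq\ul\W$) are read off from the geometry of the algebraic system~\eqref{algebraic} in the $(\beta,\alpha)$ plane. You have also correctly located the technical heart in the subsolution construction---the paper's device there is to pass to complex exponents via Rouch\'e's theorem for $c$ just below $w_*(\vt)$, take real parts to obtain oscillatory solutions whose positivity components are bounded strips, and then truncate those strips in the direction $\xi^\perp$ by subtracting a reflected copy of~$V$.
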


If $D\leq2d$ then $\W\equiv\ol B_{c_K}\cap\ol\O$, that is, the road has no
effect on the asymptotic speed of spreading, in any direction, which means that 
the asymptotic speed coincides with the Fisher -KPP invasion speed $c_K$.  On the contrary, in the case
$D>2d$, the
spreading speed  is enhanced in all directions outside a cone around the normal to the
road. The closer the direction to the road, the higher the speed. 
Of course, $w_*(\pm\pi/2)$ coincides with $c_*$ from \eqref{e1.5}.
The opening $2\vt_0$ of this cone is explicitly given by \eqref{theta0} below.
The case $D>2d$ is summarized by Figure \ref{fig:W}.
% Let us mention the following weaker formulation for $\W$:
%
% $$\forall \zeta\in\interior(\ol\O\meno\W),\quad 
% \lim_{t\to+\infty}(u(x+ct,t),v(x+ct,y,t))=(0,0),$$
% $$\forall \zeta\in\interior(\W),\quad 
% \lim_{t\to+\infty}(u(x+ct,t),v(x+ct,y,t))=(U,V(y)),$$
% locally uniformly in $(x,y)\in\ol\Omega$.
%
\begin{figure}[ht]
\hoffset=.5cm
\begin{center}
\includegraphics[height=5cm]{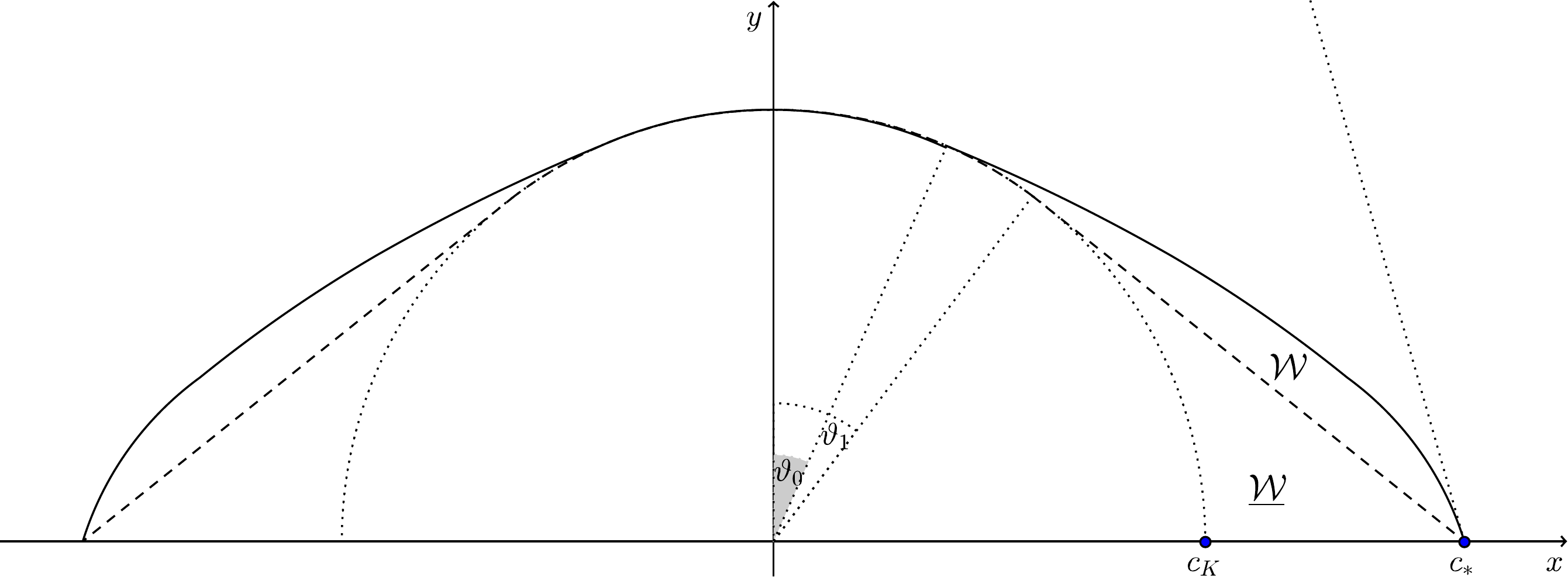}
\caption{The sets $\mc{W}$ (solid line) and $\ul{\mc{W}}$ (dashed line)
in the case $D>2d$.}
\label{fig:W}
\end{center}
\end{figure}

% The inclusion $\W\supset\ul{\W}$ stated in \thm{main} part (ii)
% is a consequence of the fact that $\W$ is convex and that it contains the
% half disc $\ol B_{c_K}\cap\ol\O$ and the segment
% $[-w_*(\pi/2),w_*(\pi/2)]\times\{0\}$.

The inclusion $\mc{W}\supset\ul{\mc{W}}$ yields the following estimates on
$\mc{W}$:
$$\vt_0<\vt_1:=\arcsin\frac{c_K}{c_*},\qquad
\forall\vt\geq\vt_1,\quad
w_*(\vt)>\frac{c_K\,c_*}
{c_K\sin\vt+\sqrt{c_*^2-c_K^2}\cos\vt}.$$
Consider now $w_*$ and $c_*$ as functions of $D$, with the other parameters
frozen. We know from \cite{BRR2} that $c_*\to\infty$ as
$D\to\infty$. Hence, the above
inequalities yield 
$$\lim_{D\to\infty}\vt_0=\lim_{D\to\infty}\vt_1=0,\qquad
\forall\vt>0,\quad\liminf_{D\to\infty}w_*(\vt)\geq \frac{c_K}{\cos\vt}.$$
Since $w_*(\vt)\leq c_K/\cos\vt$, as it is readily seen by comparison with
the tangent line $y=c_K$, we have the following
\begin{proposition}
As functions of $D$, the quantities $\vt_0$ and $w_*$ satisfy
$$\lim_{D\to\infty}\vt_0=0,\qquad 
\forall\vt\in[-\pi/2,\pi/2],\quad\lim_{D\to\infty}w_*(\vt)=
\frac{c_K}{\cos\vt}.$$
That is, as $D\nearrow\infty$, the set $\W$ increases to fill up the whole 
strip $\R\times[0,c_K)$.
\end{proposition}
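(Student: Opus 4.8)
The plan is to pass to the limit $D\to+\infty$ in two-sided bounds on $w_*$ that are already at hand. From below, the inclusion $\ul\W\subseteq\W$ in \thm{main}(ii) gives, as recorded just above the statement, $\vt_0<\vt_1:=\arcsin(c_K/c_*)$ together with
\begin{equation*}
\forall\,\vt\in[\vt_1,\pi/2),\qquad
w_*(\vt)>\frac{c_K\,c_*}{c_K\sin\vt+\sqrt{c_*^2-c_K^2}\,\cos\vt}\,.
\end{equation*}
From above, comparison of $\W$ with the horizontal line $y=c_K$ gives $w_*(\vt)\le c_K/\cos\vt$ for every $\vt$. The only external ingredient is that $c_*\to+\infty$ as $D\to+\infty$, proved in \cite{BRR2}.

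Granting these, the first limit is immediate: $0\le\vt_0<\arcsin(c_K/c_*)\to0$, hence $\vt_0\to0$ (and $\vt_1\to0$ as well). For the second, fix $\vt\in(0,\pi/2)$. Since $\vt_1\to0$, for $D$ large one has $\vt\ge\vt_1$, so the displayed lower bound applies; dividing its numerator and denominator by $c_*$ and letting $c_*\to+\infty$, the right-hand side converges to $c_K/\cos\vt$. Thus $\liminf_{D\to+\infty}w_*(\vt)\ge c_K/\cos\vt$, which with the upper bound $w_*(\vt)\le c_K/\cos\vt$ yields $\lim_{D\to+\infty}w_*(\vt)=c_K/\cos\vt$. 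The remaining angles are treated directly: $w_*(0)=c_K$ for all $D$; $w_*(\pm\pi/2)=c_*\to+\infty=c_K/\cos(\pm\pi/2)$; and negative $\vt$ follow from the evenness of $w_*$.

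It remains to read off the geometric picture. By \thm{main}(iii), for each fixed $\vt$ the map $D\mapsto w_*(\vt)$ is nondecreasing (equal to $c_K$ while $\vt\le\vt_0(D)$, strictly increasing once $\vt>\vt_0(D)$), so $D\mapsto\W$ is nondecreasing for inclusion. Writing points as $r(\sin\vt,\cos\vt)$ and using the pointwise limit just obtained, one checks that $\bigcup_{D>0}\W=\R\times[0,c_K)$ up to a subset of the boundary line $\{y=c_K\}$: indeed a point with $0<y<c_K$ has $r<c_K/\cos\vt=\lim_D w_*(\vt)$ and hence lies in $\W$ for $D$ large, a point $(x,0)$ lies in $\W$ as soon as $|x|\le c_*$, while conversely $y=r\cos\vt\le w_*(\vt)\cos\vt\le c_K$ on every $\W$.

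I do not expect a real obstacle: the argument is a limit passage in inequalities already established. The one place needing a little care is the last step — turning the pointwise convergence $w_*(\cdot)\to c_K/\cos(\cdot)$ and the monotonicity of $\W$ in $D$ into the clean set identity, in particular the behaviour near the road $y=0$ (handled by $c_*\to+\infty$) and near the top of the strip $y=c_K$.
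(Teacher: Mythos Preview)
Your argument is correct and follows precisely the route taken in the paper: the paragraph immediately preceding the Proposition already records $\vt_0<\vt_1=\arcsin(c_K/c_*)$, the lower bound for $w_*(\vt)$ coming from $\ul\W\subset\W$, the upper bound $w_*(\vt)\le c_K/\cos\vt$ from the tangent line $y=c_K$, and the fact that $c_*\to\infty$ from \cite{BRR2}; the Proposition is then stated as an immediate consequence, exactly as you derive it. Your additional care with the endpoint angles and the set-theoretic reading of the limit is fine but goes slightly beyond what the paper spells out.
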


Let us give an extension of Theorem \ref{thm:main}.
In \cite{BRR3}, we further investigated the effects of transport and reaction on
the road.
This results in the two additional terms $q\partial_x u$ and
$g(u)$ in the first equation of \eqref{Cauchy}.
We were able to extend the results of \cite{BRR2} under a concavity
assumption on $f$ and $g$. The additional assumption on $f$ is not required if
$g$ is a pure mortality term, i.e., $g(u)=-\rho u$ with $\rho\geq0$.
This is the most relevant case from the point of view of the applications to
population dynamics. 
The system with transport and pure mortality on the road reads
\Fi{CauchyT}
\begin{cases}
\partial_t u-D \partial_{xx} u+q\partial_x u= \nu\rest{v}{y=0}-\mu u
-\rho u, & x\in\R,\ t>0\\
\partial_t v-d\Delta v=f(v), & (x,y)\in\O,\ t>0\\
-d\partial_y\rest{v}{y=0}=\mu u-  \nu\rest{v}{y=0}, & x\in\R,\ t>0,
\end{cases}
\Ff
with $q\in\R$ and $\rho\geq0$.
The first difference with \eqref{Cauchy} is that $(\nu/\mu,1)$ is no longer a
solution if $\rho\neq0$. However, we showed in \cite{BRR3} that
\eqref{CauchyT} admits a unique positive, bounded, stationary solution
$(U_S,V_S)$, with $U_S$ constant and $V_S$ depending only on $y$ and such that
$V_S\to1$ as $y\to+\infty$. We then derived the existence of the asymptotic
speed of spreading (to $(U_S,V_S)$) in the direction of the line. This is not 
symmetric if $q\neq0$. There are indeed
two asymptotic speeds of spreading $c_*^\pm$, in the directions $\pm(1,0)$
respectively. They satisfy $c_*^\pm\geq c_K$, with strict inequality if and only 
if
\Fi{enhancement}
\di\frac{D}d>2+\frac\rho{f'(0)}\mp\frac q{\sqrt{df'(0)}}.
\Ff

The method developed in the present paper to prove \thm{main} can be adapted
to the case of system \eqref{CauchyT}.
The details on how this is achieved are given in Section \ref{sec:T} below.
In this framework, the notion of the asymptotic expansion shape is
modified by replacing $1$ with $V_S(y)$ in \eqref{W-}.
\begin{theorem}\label{thm:mainT}
For system~\eqref{CauchyT}, the following properties hold true:
\begin{enumerate}[(i)]
\item {\em(Spreading).} 
There exists an asymptotic expansion shape $\W$. 
\item {\em(Expansion shape).} 
The set $\mc{W}$ is convex and it is of the form
$$\mc{W}=\{r(\sin\vt,\cos\vt)\ :\
-\pi/2\leq\vt\leq\pi/2,\ \ 0\leq r\leq w_*(\vt)\},$$
with $w_*\in C^1([-\pi/2,\pi/2])$ such that
$$w_*=c_K\ \text{ in }[\vt_-,\vt_+],\qquad 
w_*'<0\ \text{ in }[-\pi/2,\vt_-),\qquad
w_*'>0\ \text{ in }(\vt_+,\pi/2],$$
for some critical angles $-\pi/2\leq\vt_-<0<\vt_+\leq\pi/2$.
\item {\em(Directions with enhanced speed).} If \eqref{enhancement} does not
hold then $\vt_\pm=\pm\pi/2$. Otherwise, if \eqref{enhancement} holds,
$\vt_\pm\neq\pm\pi/2$. 
% and $\vt_\pm$ is a strictly decreasing function of $D-2d$.
\end{enumerate}
\end{theorem}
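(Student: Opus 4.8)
The plan is to reduce Theorem~\ref{thm:mainT} to Theorem~\ref{thm:main} by mimicking, step by step, the proof we have already assembled for the symmetric system \eqref{Cauchy}, keeping track of the two places where the extra terms $q\partial_x u$ and $-\rho u$ genuinely change the analysis: the computation of planar exponential waves, and the loss of symmetry $\vt\mapsto-\vt$. For part~(i), the ingredients are the comparison principle for \eqref{CauchyT} (established in \cite{BRR3} and recalled in the appendix), the existence and characterisation of the stationary state $(U_S,V_S)$ with $V_S(y)\to1$, and the generalised sub- and supersolutions. The supersolution side is handled as in Section~\ref{sec:spreading}: above any cone-like region $\frac1t(x,y)\notin\mc W^\e$ one builds an exponentially decaying supersolution from the linearised waves of Section~\ref{sec:exp}, adapted to the operator $\partial_t-D\partial_{xx}+q\partial_x+(\mu+\rho)\cdot$ on the road. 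The subsolution side reuses the compactly supported subsolutions of Section~\ref{sec:subsolutions}; because $f(v)\le f'(0)v$ still holds and $g(u)=-\rho u$ is linear, no concavity hypothesis is needed, exactly as noted in the excerpt. This yields $w_*(\vt)$ as the speed in direction $(\sin\vt,\cos\vt)$ and, as in Theorem~\ref{thm:main}, forces $\mc W$ to be the subgraph $\{r(\sin\vt,\cos\vt):0\le r\le w_*(\vt)\}$ with $w_*\in C^1$.

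For part~(ii), convexity of $\mc W$ follows from the same argument as before: $w_*$ is obtained as an infimum of speeds over a family of planar supersolutions, hence $\mc W$ is an intersection of half-planes intersected with $\ol\O$. The new point is the location and monotonicity of the flat part. One computes, as in Section~\ref{sec:exp}, the admissible decay vectors for waves of the linearised system; the dispersion relation on the road now reads (schematically) $Dp^2-qp-(\mu+\rho)=\cdots$, which is no longer even in $p$, so the critical speed in direction $\pm(1,0)$ splits into $c_*^+\neq c_*^-$ precisely when \eqref{enhancement} holds with the corresponding sign. Away from the road the field equation is unchanged, so for directions near the vertical the best supersolution is still the isotropic Fisher--KPP wave, giving $w_*(\vt)=c_K$ on a closed interval $[\vt_-,\vt_+]$. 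That $w_*=c_K$ exactly on an interval, with $\vt_-<0<\vt_+$, and $w_*$ strictly monotone outside it, comes from analysing where the ``road-assisted'' family of supersolutions beats the constant $c_K$: this is a one-variable calculus exercise on the explicit dispersion relation, the sign of $q$ shifting the interval off-centre. Strict monotonicity of $w_*$ on $(\vt_+,\pi/2]$ (resp.\ $[-\pi/2,\vt_-)$) follows because the optimal supersolution there depends analytically and strictly monotonically on the direction, just as in Section~\ref{sec:w*}.

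For part~(iii), the criterion $\vt_\pm=\pm\pi/2$ versus $\vt_\pm\neq\pm\pi/2$ is a direct consequence of the threshold \eqref{enhancement}: by \cite{BRR3}, $c_*^\pm>c_K$ iff \eqref{enhancement} holds with the corresponding choice of sign, and $\vt_\pm\neq\pm\pi/2$ is equivalent to the speed in a neighbourhood of the horizontal direction $\pm(1,0)$ exceeding $c_K$, i.e.\ to $c_*^\pm>c_K$, using continuity of $w_*$ and the fact that $w_*(\pm\pi/2)=c_*^\pm$. If \eqref{enhancement} fails for the $+$ sign, then as in Theorem~\ref{thm:main}(iii) the road-assisted supersolutions never beat $c_K$ for $\vt>0$, so $\vt_+=\pi/2$; symmetrically for $\vt_-$ and the $-$ sign.

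\medskip

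\emph{Main obstacle.} The genuinely delicate step is the construction of the compactly supported subsolutions of Section~\ref{sec:subsolutions} in the non-symmetric, drifted setting: the subsolution must be matched across the interface $y=0$ with the correct Robin-type condition, must be a subsolution of the drifted road equation with an effective decay rate that is no longer symmetric in $x$, and must still be compactly supported so that it can be placed under an arbitrary positive solution after a waiting time. Getting the interface matching and the exponential ansatz to be simultaneously consistent on both sides of the flat part of $w_*$ — and to degenerate correctly to the Fisher--KPP subsolution in the vertical cone — is where the bulk of the work lies; the rest of the argument is a faithful, if lengthy, transcription of the symmetric case.
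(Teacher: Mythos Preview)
Your proposal is correct and follows essentially the same route as the paper's Section~\ref{sec:T}: adapt Sections~\ref{sec:exp}--\ref{sec:w*} to the drifted system, tracking the modified dispersion relation on the road and the loss of $\vt\mapsto-\vt$ symmetry. The one place where the paper is more precise than your sketch is your ``main obstacle'': the compactly supported subsolution construction goes through almost verbatim once one checks that the new $\alpha_D^+(c,\beta)$ remains concave in $\beta$ and satisfies $\alpha_D^+(c,0)\geq0$ (the latter coming from $\rho\geq0$), which preserves the slope inequalities \eqref{slopes}; and the characterisation $w_*=c_K\Leftrightarrow\xi_1\in[s_-,s_+]$ with $s_-<0<s_+$ comes cleanly from the concavity of the new function $\Phi(\xi_1)$, with $|s_\pm|<1\Leftrightarrow\Phi(\pm1)<0\Leftrightarrow$~\eqref{enhancement}.
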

\begin{figure}[ht]
\hoffset=.5cm
\begin{center}
\includegraphics[height=7cm]{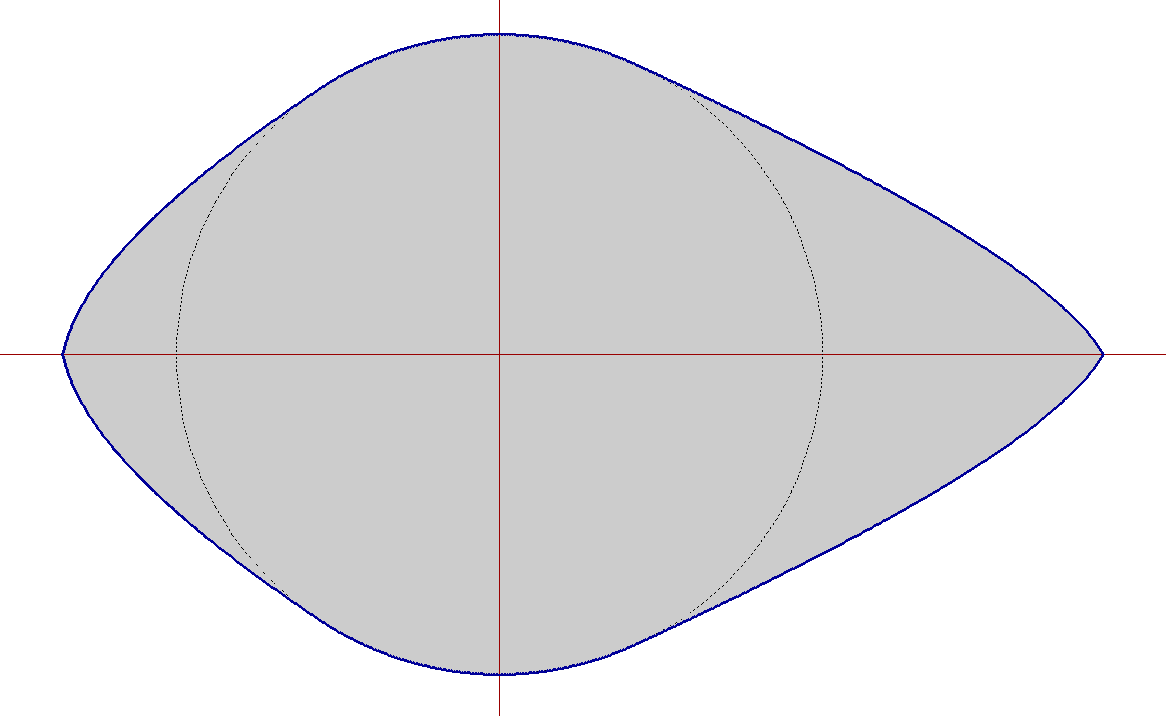}
\caption{The asymptotic expansion shape in the presence of a transport term
towards right on the road ($q>0$).}
\label{fig:Eye}
\end{center}
\end{figure}
\subsection{Discussion and comments}
Let us first comment on how the spreading velocity in the direction 
$\xi=(\sin\vt,\cos\vt)$ is sought for. 
It will be the least $c>0$ 
such that the linearisation of \eqref{Cauchy} around 0 admits solutions of the 
form
$$
(U(x,t),V(x,y,t))=(e^{-(\alpha,\beta)\.((x,0)-ct\xi)},\gamma
e^{-(\alpha,\beta)\.((x,y)-ct\xi)}),
$$
with $\alpha,\beta\in\R$, $\gamma>0$. Let us point out that $V$ is not 
exactly a planar wave in the direction $\xi$, for the simple reason that its 
level sets are not hyperplanes orthogonal to $\xi$, but to $(\alpha,\beta)$.
We will find that, when $D>2d$ and $\vt$ is larger than a critical angle $\vt_0$,
the vector $(\alpha,\beta)$ associated with the least $c$ is not parallel 
to $\xi$.
This is the reason why the velocity $w_*(\vt)$ looks different from the 
classical Freidlin-G\"artner
formula \cite{FG}, that we recall here: for a scalar equation of the form 
\begin{equation}
\label{e2.1}
u_t-\Delta u+b(x)\.\nabla u=\mu(x)u-u^2,
\end{equation}
with $\mu>0$, $\mu$ and $b$ 1-periodic, the spreading velocity in the 
direction $\xi$ is given by 
\begin{equation}
w_*(\xi)=\inf_{\xi\.\xi'>0}\frac{c_*(\xi)}{\xi\.\xi'}
\label{e2.4}
\end{equation}
where $c_*(\xi)$ is the least $c$ such that the linearisation of \eqref{e2.1} around 0:
\begin{equation}
\label{e2.2}
u_t-\Delta u+b(x)\.\nabla u=\mu(x)u,
\end{equation}
admits solutions of the form 
$$
\phi(x)e^{\lambda(x\.\xi-ct)},\  \   \   \   \   \phi>0,\ \hbox{1-periodic.}
$$
The optimal assumption for $\mu$ is not, by the way, $\mu>0$. A more general assumption is 
$\lambda_1^{per}(-\Delta-\mu(x))<0$,
where $\lambda_1^{per}$ denotes the first periodic eigenvalue.
In any case, \eqref{e2.4} gives the formula
$$\forall\xi,\xi'\in\R^N\backslash\{0\},\quad  c_*(\xi)\geq w_*(\xi)\xi\.\xi'.$$
We will see in Section 6 (Lemma \ref{lem:tangent} below) that a similar, but different, formula 
holds in our case, namely,
$$\forall \vt\in[\vt_0,\pi/2],\ \t\vt\in[0,\pi/2],\quad
w_*(\t\vt)\leq\frac{\cos(\vt-\vp_*(\vt))}{\cos(\t\vt-\vp_*(\vt))}\,w_*(\vt).
$$
It will, in fact, be derived as a consequence of the expression of the spreading velocity.

Several proofs of the Freidlin-G\"artner formula have been given, besides that 
of~\cite{FG}. See Evans-Souganidis \cite{ES1} for a viscosity 
solutions/singular perturbations approach, Weinberger \cite{W02} for an abstract 
monotone system proof; Berestycki-Hamel-Nadin \cite{BHNadin} for a PDE proof. 
See also \cite{BHN1} for equivalent formulae and estimates of the spreading 
speed in periodic media, as well as \cite{BN1} for one-dimensional general 
media.  Many of these results are explained, and developped, in \cite{BHbook}.

Let us now discuss the shape of the set ${\mc{W}}$ in Theorem \ref{thm:main}, 
and how it compares to~$\ul{\mc{W}}$. The latter  has a very natural 
interpretation as the reachable set from the origin in time~$1$ by moving with 
speed $c_*$ on the road and $c_K$ in the field.
Indeed, considering trajectories obtained by moving on the road until time 
$\lambda\in[0,1]$ and then on a straight line in the field for the remaining
time $1-\lambda$,
% Namely, subject to this constraint on the speed, one can check that the fastest
% trajectories
% from the origin to a given point are either
% segments forming an angle $\vt$ less than $\vt_1:=\arcsin(c_K/c_*)$ with the
% vertical, or an horizontal segment followed by a segment forming exactly the
% angle $\vt_1$ with the vertical.
one finds that the reachable set is
the convex hull of the union of the segment
$[-c_*,c_*]\times\{0\}$ and the half-disc
$\ol B_{c_K}\cap\ol\O$, that is, $\ul{\mc{W}}$.

Another way to obtain the set $\ul{\mc{W}}$ is the following: consider a 
set-valued map $t\mapsto U_t\in\ol\O$ and impose 
that the trace of $U_t$ expands at speed $c_*$ on the $x$-axis, and that the 
rest evolves by asking that the normal velocity of its boundary 
equals $c_K$. In PDE terms, $U_t=\{(x,y)\ :\phi(x,y,t)\geq1\}$, where $\phi$ 
solves the eikonal equation
\[\begin{cases}
\phi_t-c_K\vert\nabla\phi\vert=0 & t>0,(x,y)\in\Omega\\
\phi(x,0,t)=\1_{[-c_*t,c_*t]}(x) & t>0,x\in\R
\end{cases}\]
So, the family of sets $(U_t)_{t>0}$ is simply obtained by 
applying the Huygens principle
with the segment $[-c_*t,c_*t]$ on the road as a source.  
In other words, $t\ul\W=U_t$ and it evolves with normal 
velocity $c_K$. Notice that imposing that a family of sets $(tA)_{t>0}$ evolves 
with normal velocity $c_K$ forces the curvature of $A$ to be either $1/c_K$ or 
$0$, i.e.,~$A$ is locally either a disc of radius $c_K$ or a half-plane.
It would have been tempting to think that ${\mc{W}}$   coincides with 
$\ul{\mc{W}}$, just as in the singular perturbation approach to front propagation 
in parabolic equations or systems - see Evans-Souganidis \cite{ES1}, \cite{ES2}.
The fact that the asymptotic expansion shape is actually larger than this set
is remarkable. And, as a matter of fact, we estimate in Proposition 
\ref{pro:normalspeed} below
the difference between ${\mc{W}}$ and $\ul{\mc{W}}$ in terms of the normal velocities of
their boundaries when magnified by $t$. Namely, we discover that the normal 
speed of $(t\W)_{t>0}$ at a boundary point $t(\sin\vt,\cos\vt)$, 
$\vt>\vt_0$, coincides with the normal speed of the level lines of
the planar wave for the linearised system which defines $w_*(\vt)$ 
- see the next section. 
This speed is larger than $c_K$ because the exponential decay rate of $V$ in 
the direction orthogonal to its level lines is less than the critical one: 
$\sqrt{f'(0)/d}$. 
We expect this decay to be approximatively satisfied for large time by the 
solution of \eqref{Cauchy} emerging from a compactly supported initial datum. 
Thus, heuristically, the presence of the road would result in an ``unnatural'' 
decay for solutions of the KPP equation with compactly supported initial data, 
which, in turns, would be the reason why $\W$ does not coincide with the set 
$\ul\W$ following from Huygens' principle.

%%%%%%%%%%%%%%%%%%%%%%%%%%%%%%%%%%%%%%%%%%%%%%%%%%%%%%%%%%%%%%%%%%

%%%%%%%%%%%%%%%%%%%%%%%%%%%%%%%%%%%%%%%%%%%%%%%%%%%%%%%%%%%%%%%%%%

\section{Planar waves for the linearised system}
\label{sec:exp}

Consider the linearisation of system \eqref{Cauchy} around $v=0$:
\Fi{linear}
\begin{cases}
\partial_t u-D \partial_{xx} u= \nu\rest{v}{y=0}-\mu u &
x\in\R,\
t>0\\
\partial_t v-d\Delta v=f'(0)v & (x,y)\in\O,\ t>0\\
-d\partial_y\rest{v}{y=0}=\mu u(x,t)-  \nu\rest{v}{y=0} & x\in\R,\ t>0.
\end{cases}
\Ff
% Up to rescaling the parameters, we can take $\nu=1$.
Take a unit vector $\xi=(\xi_1,\xi_2)$, with $\xi_2\geq0$. By symmetry, we
restrict to $\xi_1\geq0$. As said above, solutions are sought for in the form
\begin{equation}
\label{exp}
(U(x,t),V(x,y,t))=(e^{-(\alpha,\beta)\.((x,0)-ct\xi)},\gamma
e^{-(\alpha,\beta)\.((x,y)-ct\xi)}),
\end{equation}
with $c\geq0$, $\gamma>0$ and $\alpha,\beta\in\R$ (not
necessarily positive). This leads to the system
$$
\begin{cases}
c\,\xi\.(\alpha,\beta)-D\alpha^2=\nu\gamma-\mu\\
c\,\xi\.(\alpha,\beta)-d(\alpha^2+\beta^2)=f'(0)\\
d\gamma\beta=\mu-\nu\gamma.
\end{cases}
$$
The third equation yields $\gamma=\mu/(\nu+d\beta)$ and then
$\beta>-\nu/d$. Setting $\chi(s):=\mu s/(\nu+s)$, the system on $(\alpha,\beta)$
reads
\begin{equation}\label{algebraic}
\begin{cases}
c\xi_1\alpha+c\xi_2\beta-D\alpha^2=-\chi(d\beta)\\
c\xi_1\alpha+c\xi_2\beta-d(\alpha^2+\beta^2)=f'(0).
\end{cases}
\end{equation}
The first equation in the unknown $\alpha$ has the roots
$$\alpha_D^\pm(c,\beta):=\frac1{2D}\left(c\xi_1\pm\sqrt{(c\xi_1)^2+
4D\left(c\xi_2\beta+\chi(d\beta)\right)}\right),$$
which are real if and only if $\beta$ is larger than some value 
$\ul\beta(c)\in(-\nu/d,0]$.
The set of real solutions
$(\beta,\alpha)$ of the first equation in \eqref{algebraic} is then
$\;\Sigma(c)=\Sigma^-(c)\cup\Sigma^+(c)$, with
$$\Sigma^\pm(c):=\{(\beta,\alpha_D^\pm(c,\beta))\ :\
\beta\geq\ul\beta(c)\}.$$
This is a smooth curve with leftmost
point $(\ul\beta(c),c\xi_1/2D)$. Rewriting the second equation in
\eqref{algebraic} as
$\displaystyle|(\alpha,\beta)-\frac c{2d}\xi|^2=\frac{c^2}{4d^2}-\frac{f'(0)}d$,
we see that it has solution if and only if $c\geq c_K$, where 
$c_K:=2\sqrt{df'(0)}$ is the
invasion speed in the field. 
In the $(\beta,\alpha)$ plane, it represents the circle $\Gamma(c)$ of centre
$C(c)$ and radius $r(c)$ given by
$$C(c)=\frac c{2d}(\xi_2,\xi_1),\qquad r(c)=\frac{\sqrt{c^2-c_K^2}}{2d}.$$
Let $\mc{S}(c)$ denote the closed set bounded from below by $\Sigma^-(c)$ and
from above by
$\Sigma^+(c)$ and let $\mc{G}(c)$ denote the closed disc
with boundary $\Gamma(c)$.
Exponential functions of the type \eqref{exp} are supersolutions of
\eqref{linear} if and only if $(\beta,\alpha)\in \mc{S}(c)\cap
\mc{G}(c)$. Since the centre $C(c)$ belongs to the line
$s\mapsto s(\xi_2,\xi_1)$ and the closest point of $\Gamma(c)$ to the origin,
$P(c):=C(c)-r(c)(\xi_2,\xi_1)$, satisfies
$$P'(c)\.(\xi_2,\xi_1)=\frac1{2d}-\frac c{d\sqrt{c^2-c_K^2}}<0,\qquad
\lim_{c\to+\infty}P(c)=0,$$
we find that 
$$\forall c'\geq c\geq c_K,\quad \mc{G}(c')\supset\mc{G}(c),\qquad
\bigcup_{c\geq c_K}\mc{G}(c)=\{(\beta,\alpha)\ :\
(\beta,\alpha)\.(\xi_2,\xi_1)>0\}.$$
% 
% As $c$ increases, the radius of $\Gamma(c)$ increases and its bottom
% $(B_1(c),B_2(c))$ moves
% rightward and downward, since
% $$B_1'(c)\geq0,\qquad
% B_2'(c)=\frac1{2d}\left(\xi_1-\frac c{\sqrt{c^2-c_K^2}}\right)<0.$$ 
% In addition, as $c\to\infty$, $B_2(c)$ tends to $0$ or $-\infty$ depending on
% the fact that $\xi_2=0$ or not.
On the other hand, $\alpha_D^+(c,\beta)$ is increasing in $c$ and concave
in $\beta$, the latter following from the concavity of
$c\xi_2\beta+\chi(d\beta)$.                     
 \begin{figure}[ht]
 \centering
 \subfigure[$c<w_*$]
   {\includegraphics[width=4.5cm]{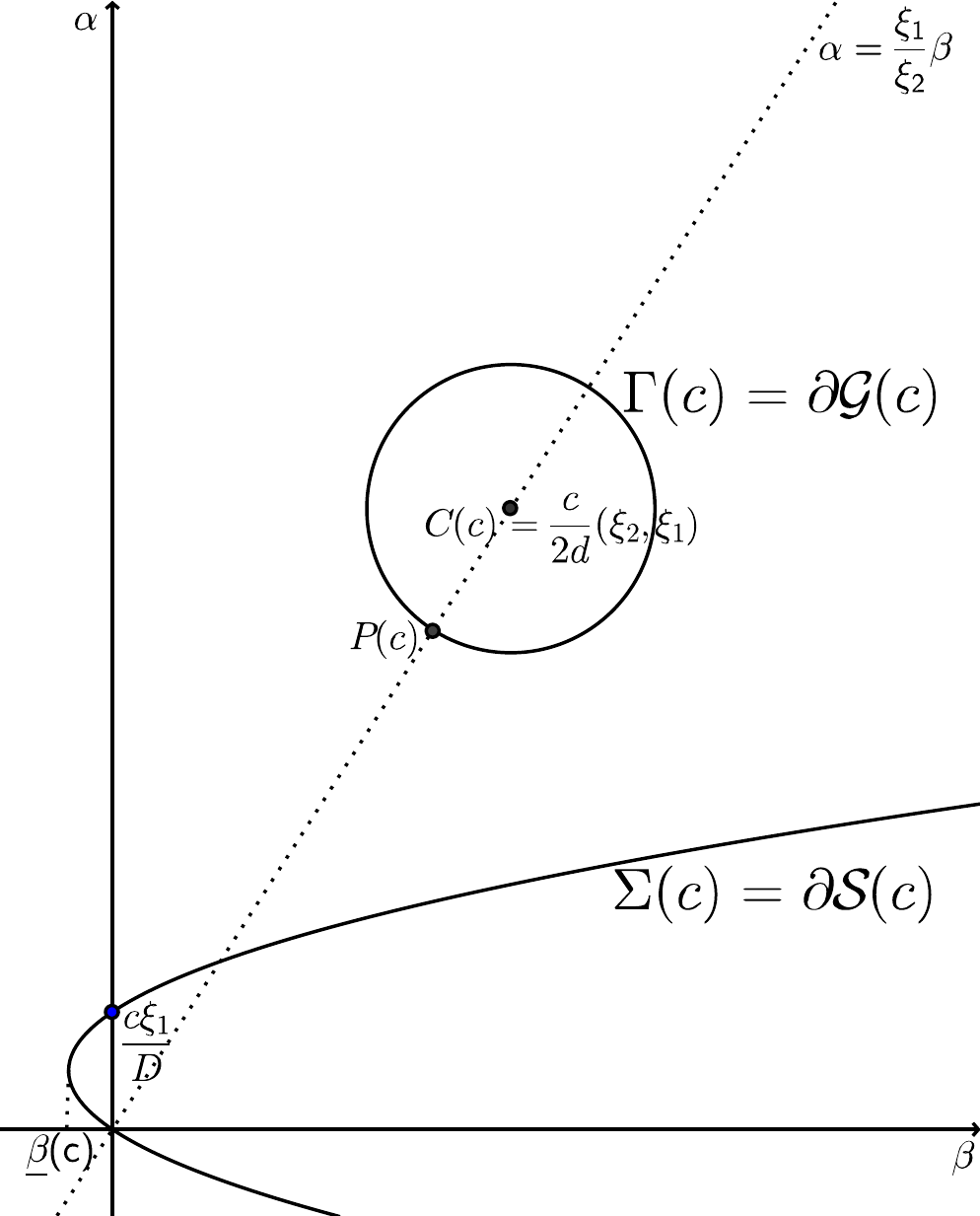}}
 \hspace{3mm}
 \subfigure[$c=w_*$]
   {\includegraphics[width=4.5cm]{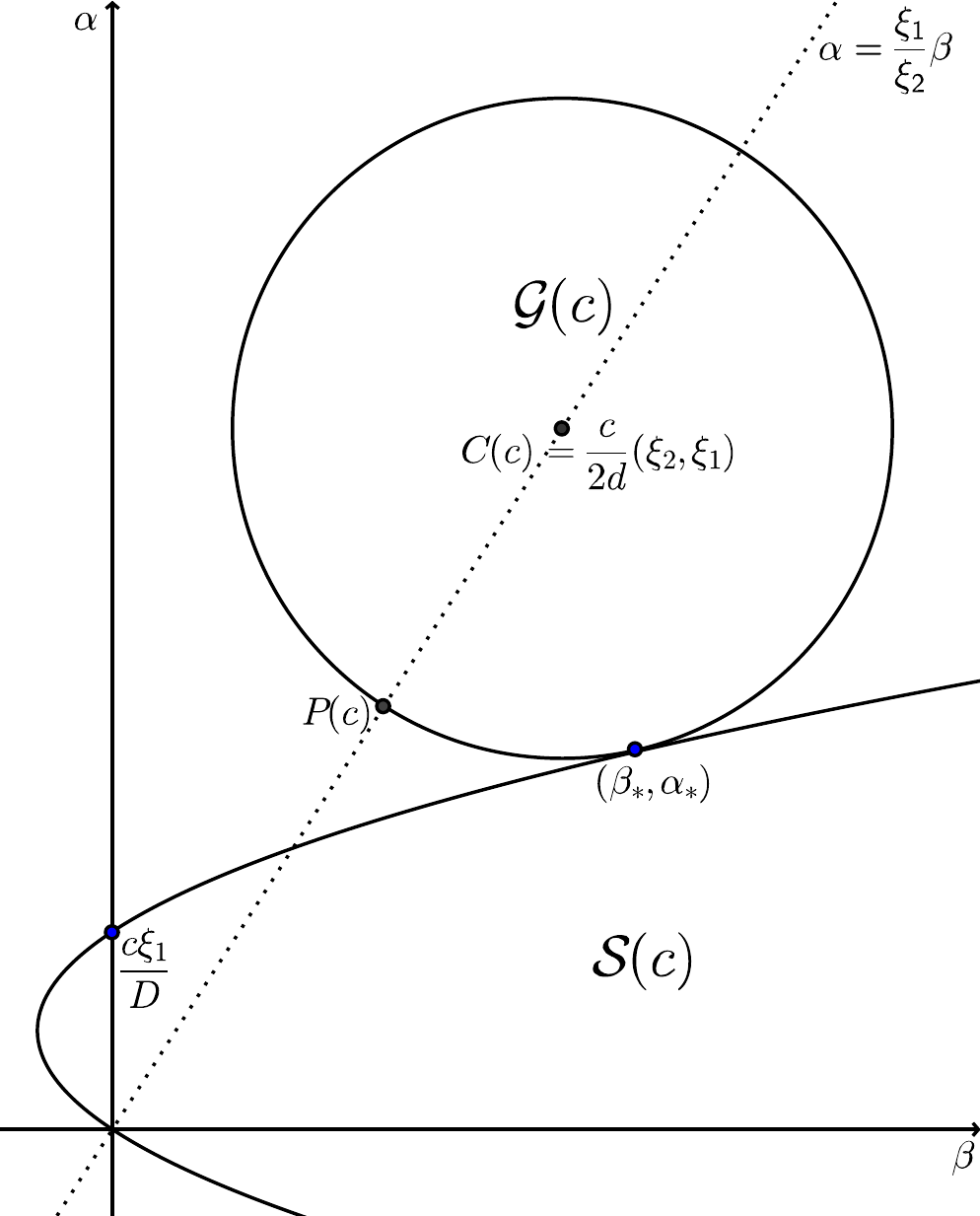}}
 \hspace{3mm}
 \subfigure[$c>w_*$]
   {\includegraphics[width=4.5cm]{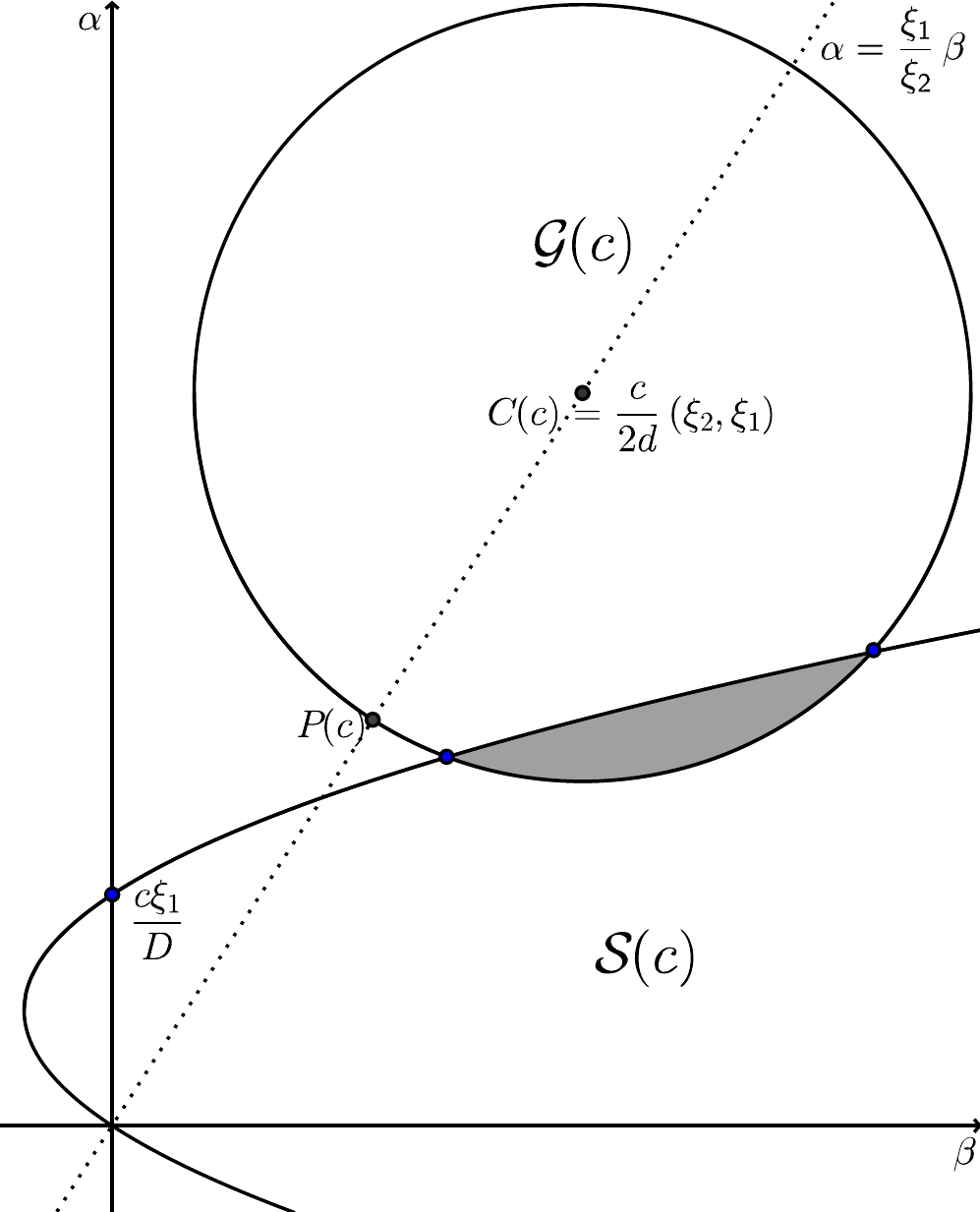}}
 \caption{The case $w_*>c_K$; supersolutions correspond to the shaded region.}
 \label{fig:w*>cK}
%  \end{figure}
% \begin{figure}[ht]
\vspace{5mm}
\centering
 \subfigure[$c=c_K$]
   {\includegraphics[width=4.5cm]{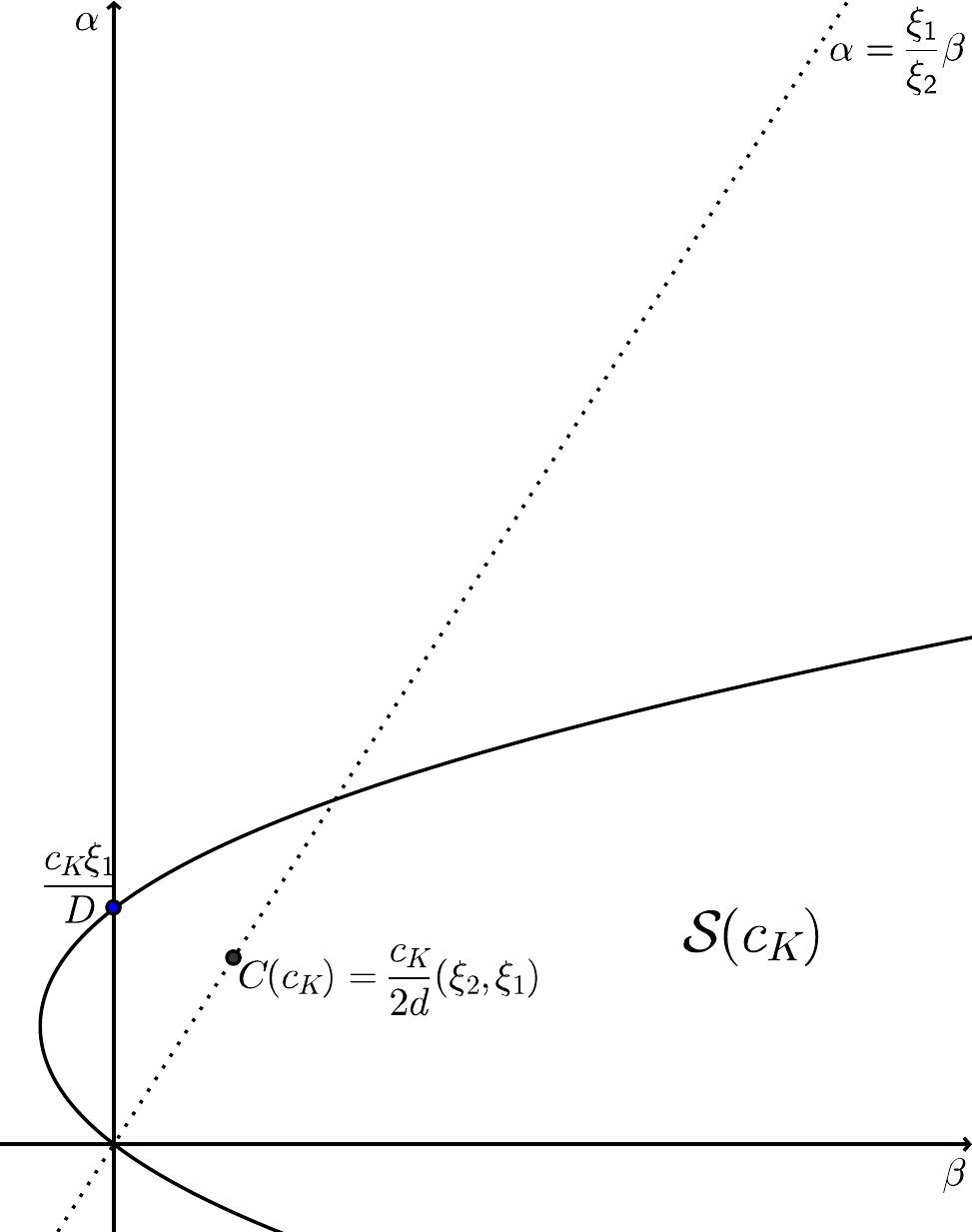}}
 \hspace{20mm}
 \subfigure[$c>c_K$]
   {\includegraphics[width=4.5cm]{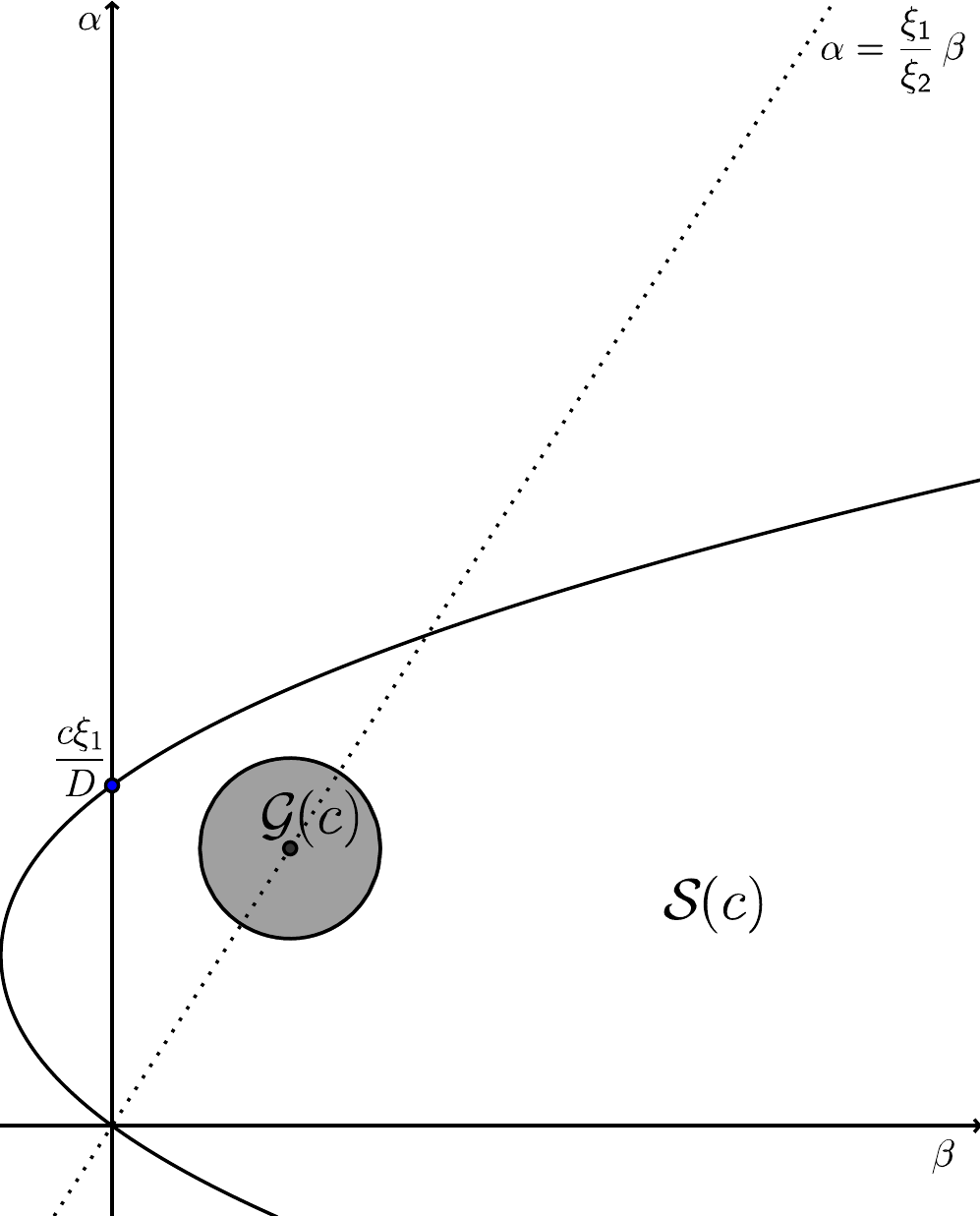}}
 \caption{The case $w_*=c_K$; supersolutions correspond to the shaded region.}
 \label{fig:w*=cK}
 \end{figure}

\noindent Therefore, there exists $w_*\geq c_K$, depending on $\xi$, such
that 
$$\mc{S}(c)\cap\mc{G}(c)\neq\emptyset\quad\sse\quad c\geq w_*,$$
with $\mc{S}(w_*)\cap\mc{G}(w_*)$ consisting in a singleton, denoted by
$(\beta_*,\alpha_*)$, see Figures~\ref{fig:w*>cK} and~\ref{fig:w*=cK}. 
Moreover,
$w_*=c_K$ if and only if $C(c_K)\in\mc{S}(c_K)$, namely, if and only if $C(c_K)$ 
satisfies the first
condition in \eqref{algebraic} with $=$ replaced by $\geq$ :
$$\frac{c_K^2}{2d}-\frac{Dc_K^2}{4d^2}\xi_1^2\geq
-\frac{\mu c_K\xi_2}{2\nu+c_K\xi_2}.$$
Since $\xi_1^2=1-\xi_2^2$, this inequality rewrites
$$
2d+D(\xi_2^2-1)+\frac{4d^2\mu\xi_2}{2\nu c_K+c_K^2\xi_2}\geq0.
$$
The function $\Phi:[0,+\infty)\to\R$ defined by
$$\Phi(s):=2d+D(s^2-1)+\frac{4d^2\mu s}{2\nu c_K+c_K^2s},$$ is increasing
and satisfies $\Phi(0)=2d-D$, $\Phi(1)>0$. As a consequence,
$w_*=c_K$ if and only if either $D\leq2d$, or $D>2d$ and 
$\xi_2\geq\Phi^{-1}(0)\in(0,1)$. Observe that the sets $\mc{S}(c)$ shrink as 
$D$ increases and therefore $w_*$ is a strictly increasing function of $D$ when 
$w_*>c_K$. 
% We call $(\beta_*,\alpha_*)$ the unique point
% belonging to $\mc{S}(w_*)\cap\mc{G}(w_*)$. It reduces to the
% centre $C(c_K)$ if $w_*=c_K$, otherwise it is the unique tangent point
% between $\mc{S}(w_*)$ and $\mc{G}(w_*)$. 

We now consider the critical speed $w_*$ as a function of the angle formed by
the vector $\xi$ and the vertical axis. Namely, for $\vt\in[-\pi/2,\pi/2]$,
we call $w_*(\vt)$ the quantity defined above associated with
$\xi=(\sin\vt,\cos\vt)$. 
We further let $(\beta_*(\vt),\alpha_*(\vt))$ denote the
first contact point $(\beta_*,\alpha_*)$.
For $\vt=\pi/2$, the above construction reduces exactly
to the one of \cite{BRR2}, thus $w_*(\pi/2)$ coincides with the value $c_*$ 
arising in \eqref{e1.5}.
The function $w_*$ is even and continuous, as it is immediate to verify.
We know that if $D\leq2d$ then $w_*\equiv c_K$. Otherwise, if $D>2d$,
$w_*(\vt)>c_K$ if and only if $\vt>\vt_0$, where
\Fi{theta0}
\vt_0:=\arccos(\Phi^{-1}(0)).
\Ff
Notice that $\vt_0$ is a decreasing function of $D$.
We finally define
$$\mc{W}:=\{r(\sin\vt,\cos\vt)\ :\
-\pi/2\leq\vt\leq\pi/2,\ \ 0\leq r\leq w_*(\vt)\}.$$
% Suppose that $\xi_2<\Phi^{-1}(D-2d)$, whence $w_*(\xi)>c_K$. The bifurcation
% analysis of \cite{BRR2} yields, for $c<w_*(\xi)$ close
% enough to $w_*(\xi)$, the existence of an exponential solution of
% the linearised system with $\alpha,\beta,\gamma\in\C\backslash\R$.
The object of Sections \ref{sec:subsolutions} and \ref{sec:spreading} is to
show that $\mc{W}$ is the asymptotic expansion set for \eqref{Cauchy}.

%%%%%%%%%%%%%%%%%%%%%%%%%%%%%%%%%%%%%%%%%%%%%%%%%%%%%%%%%%%%%%%%%%

\section{Compactly supported subsolutions}
\label{sec:subsolutions}

This section is dedicated to the construction, for all $\vt\in(-\pi/2,\pi/2)$,
of
compactly supported
subsolutions moving in the direction $\xi=(\sin\vt,\cos\vt)$ with speed
less than, but arbitrarily close to, $w_*(\vt)$. 
We derive the following
\begin{lemma}\label{lem:gensub}
For all $\vt\in(-\pi/2,\pi/2)$ and $\e>0$, there exist $c>w_*(\vt)-\e$ and a
pair $(\ul u,\ul v)$ of nonnegative functions with the following properties:
$\rest{\ul u}{t=0}$
and $\rest{\ul v}{t=0}$ are compactly supported,
\Fi{sub>0}
\exists(\hat x,\hat y)\in\ol\O,\quad \forall t\geq0,\quad
 \ul v(\hat x+ct\sin\vt,\hat y+ct\cos\vt,t)=\ul v(\hat x,\hat y,0)>0,
\Ff
and $\kappa (\ul
u,\ul v)$ is a generalised subsolution of \eqref{Cauchy} for all 
$\kappa\in(0,1]$.
% $\rest{v}{t=0}\not\equiv0$ and
% $$\forall(x,y)\in\ol\O,\ t\geq0,\quad
% \ul v(x+ct\sin\vt,y+ct\cos\vt,t)=\ul v(x,y,0).$$
%
% Moreover, if $|\vt|>\vt_0$, we can take $(\hat x,\hat y)=(0,0)$.
\end{lemma}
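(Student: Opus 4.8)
The plan is to peel off the nonlinearity and reduce everything to the linearised system \eqref{linear}, and then build, for the linearised problem, a compactly supported generalised subsolution that carries the enhanced exponent $(\beta_*,\alpha_*)=\mc{S}(w_*)\cap\mc{G}(w_*)$ of Section~\ref{sec:exp}. By evenness we may take $\vt\in[0,\pi/2)$.

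First I would use the KPP inequality near $0$: since $f\in C^1$ and $f(0)=0$, for every $\eta>0$ there is $\sigma(\eta)>0$ with $f(s)\ge(f'(0)-\eta)s$ for $s\in[0,\sigma(\eta)]$. Fix $\e>0$ and pick $c\in(w_*(\vt)-\e,\,w_*(\vt))$ (and $c>0$, possible since $w_*(\vt)\ge c_K>0$). The construction of Section~\ref{sec:exp} depends continuously on the reaction coefficient — only $\mc{G}(c)$ involves $f'(0)$, through $c_K$ — so the critical speed $w_{*,\eta}(\vt)$ obtained by repeating that construction with $f'(0)$ replaced by $f'(0)-\eta$ satisfies $w_{*,\eta}(\vt)\nearrow w_*(\vt)$ as $\eta\to0$; hence for $\eta$ small, $c<w_{*,\eta}(\vt)$. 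It then suffices to produce $(\ul u,\ul v)\ge0$ with $\rest{\ul u}{t=0},\rest{\ul v}{t=0}$ compactly supported, $\|\ul v\|_\infty\le\sigma(\eta)$, satisfying \eqref{sub>0}, and which is a generalised subsolution of the system obtained from \eqref{Cauchy} by changing $f(v)$ into $(f'(0)-\eta)v$. Indeed, on $\{\ul v>0\}$ one has $(f'(0)-\eta)\ul v\le f(\ul v)$ and where $\ul v=0$ both sides vanish, and since $0\le\kappa\ul v\le\ul v\le\sigma(\eta)$ for $\kappa\in(0,1]$ the first and third equations being linear, $\kappa(\ul u,\ul v)$ is then a generalised subsolution of \eqref{Cauchy} for every such $\kappa$, with \eqref{sub>0} inherited.

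The core step is the construction of this compactly supported subsolution of the $\eta$-linearised system moving at speed $c$, and here one should distinguish two regimes. When $w_*(\vt)=c_K$ (i.e.\ $\vt\le\vt_0$, in particular whenever $D\le2d$) it is enough to take a classical compactly supported KPP subsolution in the field moving at speed $c<c_K$ in direction $\xi$, placed initially away from the road: since $\xi$ has positive vertical component it stays away, the road equations reduce to $0\le0$ with $\ul u\equiv0$, and \eqref{sub>0} holds along any interior ray. The substantial case is $w_*(\vt)>c_K$. There I would start from the genuine exponential solution at speed $w_{*,\eta}(\vt)$, with exponent $(\alpha_*,\beta_*)$, which is precisely the object that injects the road's boost into the field. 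For $c$ slightly below $w_{*,\eta}(\vt)$ the point $(\beta_*,\alpha_*)$ leaves $\mc{S}(c)\cap\mc{G}(c)$ and, complexifying the exponent in one direction (the analogue of complex characteristic roots when $c<c_K$), one gets a solution of the linearised $v$-equation of the form $e^{-(\alpha_r,\beta_r)\.z}\cos((\alpha_i,\beta_i)\.z)$, $z=(x,y)-ct\xi$, still advancing at speed $c$ along $\xi$; truncating it where the cosine is positive, and keeping the companion road profile on $\{y=0\}$, produces a subsolution localised in the direction of $(\alpha_i,\beta_i)$. To obtain honest compact support one must also cut off in the complementary direction: near the road by anchoring the profile to it (the trace may spread at speed up to $c_*=w_*(\pi/2)\ge c$), and away from the road, where the profile is exponentially small, by patching with $0$ through smooth cut-offs supported inside the region where the candidate is already $\le0$, so as to remain a maximum of subsolutions glued across interfaces where the required inequalities survive — this is where the appendix comparison framework enters. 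Taking the free contact point on a level line of the profile yields a trajectory along which $\ul v$ stays equal to its positive initial value, i.e.\ \eqref{sub>0}.

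The main obstacle is exactly this last construction. The geometric difficulty is that for $\vt>\vt_0$ the optimal exponent $(\alpha_*,\beta_*)$ is not parallel to $\xi$ and $c$ may exceed $c_K$, so no compactly supported subsolution living away from the road can move at speed $c$ in any direction; the support must therefore remain anchored to the road while its tip advances at speed $c$ along $\xi$. Hence $(\ul u,\ul v)$ is not a rigidly translating bump — its support grows and the truncation is genuinely $t$-dependent — and this is what forces the use of generalised rather than classical subsolutions, and makes this the technical heart of the section. Everything else (the elementary KPP estimate, the continuity of $w_*$ in the parameters, the final rescaling by $\kappa$, and the reduction above) is routine.
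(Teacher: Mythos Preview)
Your overall architecture --- penalise, split into $\vt\le\vt_0$ (standard KPP bump away from the road) and $\vt>\vt_0$ (complex exponents for $c$ just below the critical speed) --- matches the paper's. The gap is entirely in the truncation for $\vt>\vt_0$, and it is a real one. Inside the strip $\{(\alpha_i,\beta_i)\.z\in(-\tfrac\pi2+\Arg\gamma,\tfrac\pi2+\Arg\gamma)\}$ the function $v=\Re V$ is strictly positive everywhere, so there is no region ``where the candidate is already $\le0$'' in which to hide a cut-off along the strip. Worse, along the strip direction $(-\beta_i,\alpha_i)$ one has $(\alpha_r,\beta_r)\.(-\beta_i,\alpha_i)=\alpha_i\beta_r-\alpha_r\beta_i<0$ by \eqref{slopes}, so $|V|$ grows \emph{exponentially} as one moves away from the road --- your claim that the profile is ``exponentially small'' there is the wrong sign, and this is exactly why no smooth cut-off can work: there is no slack to absorb the cut-off derivatives. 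The paper's device is to subtract from $v$ its reflection $v^L$ through the line $\{(x,y)\.\xi^\perp=L\}$. Since the field equation is rotation-invariant, $v^L$ is again an exact solution, so $v-v^L$ satisfies the field equation identically and vanishes on the reflection line; the positivity component of $v-v^L$ in $\ol\O$ is then genuinely compact at $t=0$ and, because $\xi\.\xi^\perp=0$, translates \emph{rigidly} by $ct\xi$. Thus the support does not grow, contrary to your final paragraph.

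The reflection introduces errors only in the two road equations, where $v^L$ appears as a perturbation of size $|V^L|\le e^{-\sigma L}|V|$ (with $\sigma=(\alpha_r,\beta_r)\.\xi^\perp>0$). To absorb these the paper penalises all three equations of \eqref{linear}, not just the reaction term: the $\e$-slack in the first and third equations is what beats the $e^{-\sigma L}$ contamination for $L$ large. Your penalisation of $f'(0)$ alone would handle the field but leave nothing to close the boundary inequalities.
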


By symmetry, it is sufficient to prove the lemma for $\vt\geq0$.
The case $\vt=\pi/2$ was treated in \cite{BRR2}.
If $\vt\in[0,\vt_0]$ then $w_*(\vt)=c_K$ and the construction
is standard, as we will see in Section \ref{sec:vertical}.
In Section \ref{sec:horizontal} we treat the remaining cases by exploiting the
analysis of planar waves
performed in the previous section. We will proceed as follows:
\begin{enumerate}
\item We first give a definition of generalised subsolutions adapted to our context.

 \item For $c\in(0,w_*(\vt))$ close enough to $w_*(\vt)$, we apply Rouch\'e's 
theorem to prove the existence of a complex exponential solution 
$(U,V)$ of the linearised system, which
moves in the direction $\xi=(\sin\vt,\cos\vt)$ with speed $c$. 
We actually work on a perturbed system
in order to get strict subsolutions of the nonlinear one.

 \item The connected components of the positivity set of 
$u:=\Re U$ are bounded intervals and those of $v:=\Re V$ are infinite strips.
In order to truncate those strips, we consider the reflection $v^L$ of $v$
with respect to the line $(x,y)\.\xi^\perp=L>0$. We then define the pair
$(\ul u,\ul v)$ by setting $(\ul u,\ul v)=(u,v-v^L)$ in a connected component of
the positivity sets of $u$ and $v-v^L$, $(0,0)$ outside.

 \item The function $\ul v$ is automatically a generalised subsolution of the
equation in the
field. We show that, choosing $L$ large enough, $(\ul u,\ul v)$ is a generalised
subsolution of the equations on the road too.
\end{enumerate}

%%%%%%%%%%%%%%%%%%%%%%%%%%%%%%%%%%%%%%%%%%%%%%%%%%%%%%%%%%%%%%%%%%
\subsection{Sub/supersolutions}
\label{sec:preliminar}
In the sequel, we will need to compare the solution of the Cauchy problem with 
a pair $(\ul u,\ul v)$ which is a subsolution inside some regions, vanishes 
on their boundaries, and is truncated to 0 outside. In the case of a single 
equation, such type of functions are {\em generalised subsolutions}, in the 
sense that they satisfy the comparison principle with supersolutions. This 
kind of properties has the flavour of those presented in~\cite{BL}.
In the case of a system, this property may not hold because, roughly speaking, 
one could truncate one component in a region where it is needed for the others 
to be subsolutions. This is why we need a different notion of generalised 
subsolution.

We consider pairs $(\ul u,\ul v)$ such that
$\ul u$ is the maximum of subsolutions of the first equation in \eqref{Cauchy}
with $v=\ul v$, while $\ul v$ is the maximum of subsolutions of the
second equation and of the last equation with $u=\ul u$. More precisely:

\begin{definition}\label{def:gensub}
A pair $(\ul u,\ul v)$ is a {\em generalised subsolution} of \eqref{Cauchy} if
$\ul u$, $\ul v$ are continuous and satisfy the following properties:
\begin{enumerate}[(i)]
 \item for any $x\in\R$, $t>0$, there is a
function $u$ such that $u\leq\ul u$ in a neighbourhood of $(x,t)$ and, at
$(x,t)$ (in the classical sense),
$$u=\ul u,\qquad\partial_t u-D \partial_{xx} u+\mu u\leq\nu 
\rest{\ul v}{y=0};$$
 \item for any $(x,y)\in\ol\O$, $t>0$, there is a function $v$ such that
$v\leq\ul v$ in a neighbourhood of $(x,y,t)$ and, at $(x,y,t)$,
$$v=\ul v,\qquad\partial_t v-d\Delta v\leq f(v)\ \text{ if }y>0,\quad
-d\partial_y v+\nu v\leq\mu\ul u\ \text{\ if }y=0.$$
\end{enumerate}
\end{definition}

Although this will not be needed in the paper, we may define generalised supersolutions  in analogous way, by
replacing ``$\leq$'' with ``$\geq$'' everywhere in Definition
\ref{def:gensub}. This notion is stronger than that of
viscosity solution (see, e.g., \cite{user}).
Nevertheless, it recovers: (i) classical subsolutions, (ii) maxima of
classical subsolutions and (iii) generalised subsolutions in the sense of
\cite{BRR2}.
 From now on, generalised sub and supersolutions are understood
in the sense of Definition \ref{def:gensub}. The comparison principle reads:

\begin{proposition}\label{pro:gencom}
 Let $(\ul u,\ul v)$ and $(\ol u,\ol v)$ be respectively a generalised
subsolution bounded from above and a generalised supersolution bounded from
below of \eqref{Cauchy} such that $(\ul u,\ul v)$ is below $(\ol u,\ol v)$ at
time $t=0$. Then $(\ul u,\ul v)$ is below $(\ol u,\ol v)$ for all $t>0$.
\end{proposition}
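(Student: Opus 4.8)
\textbf{Proof plan for Proposition \ref{pro:gencom}.}
The plan is to adapt the classical parabolic comparison argument to the coupled road--field system, using the structure of Definition \ref{def:gensub} to handle the two ``touching-from-below'' conditions in a unified way. First I would reduce to a finite time horizon: fix $T>0$ and work on $[0,T]$, since passing $T\to\infty$ at the end is free. As is standard, I would introduce the perturbed pair $(\ol u_\delta,\ol v_\delta):=(\ol u+\delta e^{Kt},\ol v+\delta e^{Kt})$ for a large constant $K$ to be chosen, so that strict inequalities are available; one shows $(\ul u,\ul v)$ stays below $(\ol u_\delta,\ol v_\delta)$ on $[0,T]$ and then lets $\delta\to0$. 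The point of the exponential weight is to absorb the zeroth-order terms: since $f\in C^1$ and the solutions are bounded, $f$ is Lipschitz on the relevant range, and the exchange terms $\mu,\nu$ are linear, so a single $K$ depending on $\|f'\|_\infty$, $\mu$, $\nu$ works for both the field equation and the road equation.

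Next comes the core of the argument: suppose, for contradiction, that $(\ul u,\ul v)$ is not below $(\ol u_\delta,\ol v_\delta)$ on $[0,T]$, and let $t_0\in(0,T]$ be the first time at which one of the two differences, $\ul u-\ol u_\delta$ or $\ul v-\ol v_\delta$, reaches $0$ somewhere in $\ol\O$ (or on $\R$). There are two cases. If the contact happens for $\ul v-\ol v_\delta$ at an interior point $(x_1,y_1)$ with $y_1>0$, I would use property (ii) of Definition \ref{def:gensub} to pick a smooth $v\leq\ul v$ near $(x_1,y_1,t_0)$ with $v=\ul v$ there; then $v-\ol v_\delta$ has an interior max at $(x_1,y_1,t_0)$, so $\partial_t(v-\ol v_\delta)\geq0$ and $\Delta(v-\ol v_\delta)\leq0$, and plugging into the differential inequalities gives $f(v)-f(\ol v)\geq \delta K e^{Kt_0}$ while $v\leq\ol v$ at that point forces $f(v)-f(\ol v)\leq \|f'\|_\infty\,(\ol v - v)\le 0$ (after accounting for the sign via the weight), a contradiction for $K$ large. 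If instead the contact for $\ul v-\ol v_\delta$ happens on the boundary $y_1=0$, or the contact is for $\ul u-\ol u_\delta$ on the road, I would treat the road equation and the Robin-type boundary condition together: at a boundary contact point one uses the Hopf-type fact that $-d\partial_y(v-\ol v_\delta)\leq 0$ there (outward normal derivative at a max is $\le0$, i.e. $\partial_y\ge0$), combined with $-d\partial_y v+\nu v\leq\mu\ul u$ and $-d\partial_y\ol v+\nu\ol v=\mu\ol u$ (here it is convenient that the weight $\delta e^{Kt}$ is killed by $\partial_y$ but contributes to the $\nu v$ term), which chains the inequality back to the road quantity $\mu(\ul u-\ol u_\delta)\le 0$ at time $t_0$; symmetrically, the road equation $\partial_t u-D\partial_{xx}u+\mu u\leq\nu\rest{\ul v}{y=0}$ at a road contact point, compared with the supersolution relation for $\ol u$, reduces the sign of $\partial_t(\ul u-\ol u_\delta)$ to that of $\nu(\rest{\ul v}{y=0}-\rest{\ol v_\delta}{y=0})\le 0$. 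The upshot is that at the first contact time the two components reinforce each other rather than allowing an escape, which is exactly what Definition \ref{def:gensub} is engineered to guarantee.

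Two technical points need care. First, the contact may occur ``at infinity'': $\ol\O$ and $\R$ are unbounded, so the first-contact time and point may not be attained. I would deal with this in the usual way, either by assuming a localisation (the subsolution has, in our applications, compactly supported data and finite speed of propagation, so one may cut off to a large ball and control the behaviour on the lateral boundary) or, more cleanly, by inserting an additional small correction of the form $\eta(1+|x|^2+|y|^2)$ — or better $\eta\,e^{\lambda(|x|+|y|)}$ with a modest $\lambda$ — into the supersolution so that the difference tends to $-\infty$ uniformly as $|(x,y)|\to\infty$ on $[0,T]$, forcing any contact into a compact set; then let $\eta\to0$. Second, the functions $v$ and $u$ furnished by Definition \ref{def:gensub} are only ``touching from below locally'' and need only be as regular as is required to evaluate the differential inequality classically at the contact point (one space derivative up to the boundary for $v$, the parabolic operator for $u$), which is precisely what the definition provides, so no extra regularity theory is needed.

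\textbf{Main obstacle.} The delicate part is the bookkeeping at a \emph{boundary} contact point $y_1=0$, where the field equation degenerates into the Robin condition and must be coupled to the road equation: one has to orchestrate the sign of the outward normal derivative, the exchange terms $\mu u-\nu\rest{v}{y=0}$, and the exponential weights on both components simultaneously, and verify that the ``first contact'' really does happen in a way consistent with \emph{both} inequalities (ii)-at-$y=0$ and (i), rather than only one of them. This is where the specific form of the generalised-subsolution definition — taking $\ul u$ as a max of subsolutions of the road equation \emph{with $v=\ul v$}, and $\ul v$ as a max over the field and boundary equations \emph{with $u=\ul u$} — is essential, since it prevents the pathological truncation scenario described in the paragraph before the definition. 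I would expect the full proof to be deferred to the appendix, as the excerpt indicates.
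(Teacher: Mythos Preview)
Your overall architecture (perturb the supersolution, localise at infinity, take the first contact time, and use the test functions from Definition~\ref{def:gensub}) is exactly right, and your discussion of the ``main obstacle'' correctly identifies the boundary contact as the delicate step. However, the specific perturbation you propose, $(\ol u_\delta,\ol v_\delta)=(\ol u+\delta e^{Kt},\ol v+\delta e^{Kt})$, has a genuine gap precisely there.

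The Robin condition $-d\partial_y v+\nu v=\mu u$ contains \emph{no time derivative}, so the large constant $K$ in $\delta e^{Kt}$ gives you nothing at a boundary contact. Concretely, at a point $(x_0,0,t_0)$ where $\ul v=\ol v_\delta$ for the first time, combining the sub- and super-inequalities and the Hopf sign $-d\partial_y(\ul v-\ol v_\delta)\geq0$ yields
\[
0\;\leq\;\mu(\ul u-\ol u_\delta)\;-\;(\nu-\mu)\,\delta e^{Kt_0},
\]
because the correction contributes $\nu\,\delta e^{Kt_0}$ on the $v$-side but $\mu\,\delta e^{Kt_0}$ on the $u$-side. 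Since $\ul u\leq\ol u_\delta$ at time $t_0$, this is a contradiction \emph{only} when $\nu>\mu$; for $\nu\leq\mu$ the argument collapses. Your sentence ``a single $K$ depending on $\|f'\|_\infty,\mu,\nu$ works for both the field equation and the road equation'' is true for those two equations, but it silently skips the third.

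The paper's fix is to use \emph{asymmetric} weights matched to the exchange ratio: after first multiplying everything by $e^{-lt}$ (so the nonlinearity becomes monotone), one sets
\[
\hat u=\ol u+\e\big(\chi(|x|)+t+1\big),\qquad
\hat v=\ol v+\tfrac{\mu}{\nu}\,\e\big(\chi(|x|)+\chi(y)+t+1\big),
\]
with $\chi\geq0$, $\chi=0$ on $[0,1]$, $\chi(r)\to\infty$, and $|\chi''|$ small. The factor $\mu/\nu$ makes the correction satisfy the boundary exchange with \emph{equality}, so $(\hat u,\hat v)$ is a genuine (strict for the first two equations) generalised supersolution regardless of the sign of $\nu-\mu$. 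The contradiction at a boundary contact then comes not from a strict boundary inequality, but from the strict gap $\ul u<\hat u$, which one establishes \emph{first} by ruling out the road contact. This ordering is essential and is what your ``symmetric'' chaining misses.

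A secondary remark: your suggested localisations $\eta(1+|x|^2+|y|^2)$ or $\eta e^{\lambda(|x|+|y|)}$ have second derivatives that enter the parabolic operators with the wrong sign and cannot be absorbed by $K$ (the quadratic one by a fixed amount, the exponential one unboundedly). The paper's slowly growing $\chi$ with $|\chi''|$ small, paired with the linear-in-$t$ term, avoids this cleanly.
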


The proof is similar to the one of Proposition 3.3 in
\cite{BRR2}, even if the notion of sub and supersolution is slightly more
general here. It is included here in Appendix \ref{sec:gencom} for
the sake of completeness.

\subsection{The case $\vt\leq\vt_0$}
\label{sec:vertical}

Let $\lambda(R)$ and $\vp$ be the principal eigenvalue and eigenfunction of the
operator
$-d\Delta-c(\sin\vt,\cos\vt)\.\nabla$ in the two dimensional ball $B_R$, with
Dirichlet boundary condition. This operator can be reduced to a self-adjoint one
by multiplying the functions times 
$e^{(\sin\vt,\cos\vt)\.(x,y)c/2d}$. One then finds that
$(\lambda(R)-c^2/4d)/d$ is equal to the \pe\ of $-\Delta$ in $B_R$. Whence, for
$0<c<w_*(\vt)=c_K$,
$$\lim_{R\to\infty}\lambda(R)=\frac{c^2}{4d}<f'(0).$$
% 
% 
% It is well known that,  
% less than $f'(0)>0$ for $R$ sufficiently large (one reduces to the principal
% eigenvalue of
% the Laplacian by replacing
% $\vp$ with
There is then $R>0$ such that $f(s)\geq\lambda(R)s$ for $s>0$
small enough, and therefore we can normalise the principal eigenfunction $\vp$
in such a way that
$$\forall\kappa\in[0,1],\quad
-d\Delta(\kappa\vp)-c(\sin\vt,\cos\vt)\.\nabla(\kappa\vp)\leq 
f(\kappa\vp)\quad\text{in }B_R.$$
It follows that the pair $(\ul u,\ul v)$ defined by $\ul u\equiv0$,
$$\ul v(x,y,t)=
\begin{cases}
 \vp(x-ct\sin\vt,y-R-ct\cos\vt) & \text{if }(x,y-R)-ct(\sin\vt,\cos\vt)\in B_R\\
 0 & \text{otherwise}
\end{cases}
$$
satisfies the properties stated in Lemma \ref{lem:gensub}.

%%%%%%%%%%%%%%%%%%%%%%%%%%%%%%%%%%%%%%%%%%%%%%%%%%%%%%%%%%%%%%%%%%

\subsection{The case $\vt>\vt_0$}
\label{sec:horizontal}

Suppose now that $D>2d$ and consider $\vt\in(\vt_0,\pi/2)$. Call
$$\xi:=(\sin\vt,\cos\vt),\qquad\xi^\perp:=(-\cos\vt,\sin\vt),$$
and, to ease notation, $w_*=w_*(\vt)$, $\alpha_*=\alpha_*(\vt)$,
$\beta_*=\beta_*(\vt)$.

%%%%%%%%%%%%%%%%%%%%%%%%%%%%%%%%%%%%%%%%%%%%%%%%%%%%%%%%%%%%%%%%%%

\subsubsection{Complex exponential solutions for the penalised system}

We start with the following
\begin{lemma}\label{lem:C-subsolution}
For $c\in(0,w_*)$ close enough to $w_*$, \eqref{linear} admits an
exponential solution $(U,V)$ of
the type \eqref{exp} with $\alpha,\beta,\gamma\in\C\backslash\R$ satisfying
\Fi{slopes}
\Re\alpha,\Re\beta>0,\qquad
0<\frac{\Im\alpha}{\Im\beta}<\frac{\Re\alpha}{\Re\beta}<\frac{\xi_1}{\xi_2}.
\Ff
\end{lemma}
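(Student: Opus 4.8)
The plan is to find the complex exponential solution as a perturbation of the real critical one. At $c=w_*$ we have the contact point $(\beta_*,\alpha_*)$, a real solution of the algebraic system \eqref{algebraic} with $\xi=(\sin\vt,\cos\vt)$, $\vt>\vt_0$, where the curve $\partial\mc S(c)$ (i.e.\ $\Sigma^+$) is tangent to the circle $\Gamma(c)$. For $c$ slightly below $w_*$ the two curves no longer meet in the real $(\beta,\alpha)$-plane, but I expect a pair of complex conjugate intersection points to branch off from $(\beta_*,\alpha_*)$. Concretely, eliminate $\alpha$ (or keep both equations) to reduce \eqref{algebraic} to a single analytic equation $F(\beta,c)=0$, where $F(\beta,w_*)$ has a double root at $\beta=\beta_*$ because of the tangency; near $(\beta_*,w_*)$ one has $F(\beta,c)=a(c-w_*)+b(\beta-\beta_*)^2+\cdots$ with $a,b\neq 0$ of the appropriate signs, so for $c<w_*$ close to $w_*$ the two roots $\beta=\beta_*\pm i\sqrt{a(w_*-c)/b}+o(\sqrt{w_*-c})$ are genuinely non-real. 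This is exactly where Rouch\'e's theorem (as announced in the outline) enters: apply it to $F(\cdot,c)$ on a small circle around $\beta_*$ to conclude that for $c$ near $w_*$ there are exactly two roots inside, hence (not being real since $F(\cdot,c)$ has no real root there) a conjugate pair. Then $\alpha=\alpha_D^+(c,\beta)$ and $\gamma=\mu/(\nu+d\beta)$ are determined and also non-real, giving the solution $(U,V)$ of the form \eqref{exp}.

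**Next I would verify the sign conditions \eqref{slopes}.** The real part: since $(\beta_*,\alpha_*)=P(w_*)$-type contact point lies in the open quadrant $\{(\beta,\alpha)\cdot(\xi_2,\xi_1)>0\}$ and, for $\vt>\vt_0$, is in fact an interior point of $\mc G(w_*)$ pushed onto $\Sigma^+(w_*)$ — one checks from the geometry of Figures~\ref{fig:w*>cK} that $\alpha_*>0$ and $\beta_*>0$ strictly when $\vt>\vt_0$ (the degenerate case $\beta_*=0$ or $\alpha_*=0$ corresponds precisely to $\vt=\vt_0$ or $\vt=\pi/2$). So $\Re\alpha,\Re\beta>0$ for $c$ close to $w_*$ by continuity. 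The imaginary-part inequalities are the delicate ones: writing $\beta-\beta_*\approx i\tau$ with $\tau=\sqrt{a(w_*-c)/b}>0$ (choose the branch with $\Im\beta>0$), the chain rule gives $\Im\alpha\approx \tau\,\partial_\beta\alpha_D^+(w_*,\beta_*)$, so $\Im\alpha/\Im\beta\to \partial_\beta\alpha_D^+(w_*,\beta_*)$ as $c\nearrow w_*$. Hence \eqref{slopes} reduces, in the limit, to
\[
0<\partial_\beta\alpha_D^+(w_*,\beta_*)<\frac{\xi_1}{\xi_2},
\]
and I must show both inequalities are strict at the contact point for $\vt>\vt_0$. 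The quantity $\partial_\beta\alpha_D^+(w_*,\beta_*)$ is the slope of the curve $\Sigma^+(w_*)$ at the tangency point; at tangency it equals the slope of the circle $\Gamma(w_*)$ there, which is $-(\beta_*-C_1)/(\alpha_*-C_2)$ with $C=\frac{w_*}{2d}(\xi_2,\xi_1)$. Positivity follows since $\alpha_D^+$ is increasing then concave in $\beta$ and $\beta_*$ sits to the left of the vertex (equivalently $\alpha_* < C_2$ fails appropriately — again a figure-level check); the bound by $\xi_1/\xi_2$ is the statement that the common tangent line at $(\beta_*,\alpha_*)$ is steeper than the ray $s\mapsto s(\xi_2,\xi_1)$, which holds precisely because the contact point lies strictly on the $\Sigma^+$-branch \emph{above} the line through the origin and the circle's centre — and this is exactly the regime $\vt>\vt_0$ as opposed to $\vt=\pi/2$, where the contact occurs at the "top" and the tangent becomes vertical ($\xi_1/\xi_2\to\infty$, no constraint) or $\vt=\vt_0$ where it occurs on the $\beta=0$ axis.

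**The main obstacle** I anticipate is making the two imaginary-part strict inequalities in \eqref{slopes} rigorous and \emph{uniform} in a neighbourhood of $c=w_*$ rather than only in the limit: one needs that the $o(\sqrt{w_*-c})$ corrections in $\beta-\beta_*$, and their consequences for $\alpha$ and $\gamma$, do not spoil the strict inequalities, and that the degenerate endpoint behaviour (slopes tending to $0$ as $\vt\downarrow\vt_0$, to $\infty$ as $\vt\uparrow\pi/2$) is handled for each fixed interior $\vt$. The clean way is: fix $\vt\in(\vt_0,\pi/2)$, establish the three inequalities \emph{strictly at the limit point} using only the tangency geometry (so each gap is a fixed positive number depending on $\vt$), and then invoke continuity of $\beta(c),\alpha(c),\gamma(c)$ — which the Rouch\'e/implicit-function argument delivers — to get \eqref{slopes} for all $c<w_*$ sufficiently close to $w_*$. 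The penalisation alluded to in the section outline (working with $f'(0)+\e$ in place of $f'(0)$, or similar) is a harmless further perturbation that shifts $w_*$ and the contact point continuously, so it does not affect any of the above; I would carry it along only at the very end to ensure the resulting exponential is a strict supersolution of the linear problem, as needed for the subsolution construction in the rest of Section~\ref{sec:horizontal}.
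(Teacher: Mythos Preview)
Your overall strategy is the same as the paper's: obtain the complex root via Rouch\'e near the tangency point, then read off \eqref{slopes} in the limit $c\to w_*^-$ from the geometry of the contact between $\Sigma^+(w_*)$ and $\Gamma(w_*)$. The existence part and the positivity of $\Re\alpha,\Re\beta$ are fine.

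There is, however, a genuine gap in your treatment of the chain of inequalities. You write that \eqref{slopes} ``reduces, in the limit, to $0<\partial_\beta\alpha_D^+(w_*,\beta_*)<\xi_1/\xi_2$''. This drops a term: since $\Re\alpha/\Re\beta\to\alpha_*/\beta_*$, the limiting form of \eqref{slopes} is the \emph{three}-step chain
\[
0<\text{(tangent slope)}<\frac{\alpha_*}{\beta_*}<\frac{\xi_1}{\xi_2},
\]
and the middle inequality --- tangent slope strictly less than $\alpha_*/\beta_*$ --- is precisely the one you never address. It does not follow from the two outer ones. The paper obtains it (and simultaneously the positivity) from the concavity of $\beta\mapsto\alpha_D^+(w_*,\beta)$: the tangent line to $\Sigma^+$ at $(\beta_*,\alpha_*)$ lies above the graph, hence its $\alpha$-intercept exceeds $\alpha_D^+(w_*,0)=w_*\xi_1/D>0$; a line through $(\beta_*,\alpha_*)$ with positive $\alpha$-intercept has slope strictly less than $\alpha_*/\beta_*$. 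The inequality $\alpha_*/\beta_*<\xi_1/\xi_2$ then comes from the separate observation that $(\beta_*,\alpha_*)$ lies \emph{below} the ray through the origin and the centre $C(w_*)$ (not above, as you wrote), since it is the point of $\Gamma(w_*)$ closest to the convex set $\mc S(w_*)\ni(0,0)$.

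Your paragraph on the upper bound also has the direction reversed: you want the tangent to be \emph{less} steep than the ray $s\mapsto s(\xi_2,\xi_1)$, not steeper. Once you fix these two points, your argument coincides with the paper's.
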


\begin{proof}
For $c<w_*$, problem \eqref{linear} does not admit exponential solutions of
the type \eqref{exp}, with $\alpha,\beta,\gamma\in\R$. However, if $w_*-c$
is small enough, applying the Rouch\'e theorem to the distance between
$\Gamma$ and $\Sigma$ as a function of $\beta$, one obtains an
exponential solution $(U,V)$ with
$\alpha,\beta,\gamma\in\C$, depending on $c$, and satisfying
$$\alpha=\alpha_r+i\alpha_i,\qquad
\beta=\beta_r+i\beta_i,\qquad
\gamma=\frac\mu{\nu+d\beta},$$
$$\beta_r=\beta_*+O(w_*-c),\qquad
0\neq\beta_i=O(\sqrt{w_*-c}).$$
See the proof of Lemma 6.1 in \cite{BRR2} for the details.
Writing separately the real and complex terms of the second equation of the
system \eqref{algebraic} satisfied by $\alpha,\beta$, we get
\Fi{algebraic-C}
\begin{cases}
   c\xi\.(\alpha_r,\beta_r)-d(\alpha_r^2-\alpha_i^2+\beta_r^2-\beta_i^2)=f'(0)\\
   c\xi\.(\alpha_i,\beta_i)-2d(\alpha_r\alpha_i+\beta_r\beta_i)=0.
  \end{cases}
\Ff
The first equation of \eqref{algebraic-C} yields
$$c\xi_1\alpha_r-d(\alpha_r^2-\alpha_i^2)=
f'(0)-[c\xi_2\beta_*-d\beta_*^2]
+o(1)=c\xi_1\alpha_*-d\alpha_*^2+o(1),\quad\text{as }c\to w_*.$$
It follows that
$$\liminf_{c\to w_*^-}(\alpha_r-\frac c{2d}\xi_1)^2\geq
\liminf_{c\to w_*^-}(\alpha_*-\frac c{2d}\xi_1)^2.$$
In particular, $\alpha_r$ stays away from $\frac c{2d}\xi_1$ as $c\to w_*^-$.
Rewriting the second equation of~\eqref{algebraic-C} as 
$(\frac c{2d}\xi_1-\alpha_r)\alpha_i=(\beta_r-\frac c{2d}\xi_2)\beta_i$,
we then infer that $\alpha_i=O(\sqrt{w_*-c})$. Then, since
$\Im\gamma=O(\sqrt{w_*-c})$ too, considering the real part of
\eqref{algebraic}, we eventually find that $\alpha_r=\alpha_*+o(1)$ as
$c\to w_*^-$.

We use again the second equation ofprecisely\eqref{algebraic-C} to derive
$$\lim_{c\to w_*^-}\frac{\alpha_i}{\beta_i}=
\lim_{c\to w_*^-}\frac{\beta_r-\frac{c}{2d}\xi_2}
{\frac{c}{2d}\xi_1-\alpha_r}=\frac{\beta_*-\frac{c}{2d}\xi_2}
{\frac{c}{2d}\xi_1-\alpha_*}.$$
The latter represents the slope of the tangent line to $\mc{G}(w_*)$ at the
point $(\beta_*,\alpha_*)$.
From the convexity of $\mc{S}(w_*)$ we know that this line intersects the
$\alpha$-axis at some $\alpha>c\xi_1/D$. It follows in particular that its
slope is smaller than the one of the line through $(0,0)$ and
$(\beta_*,\alpha_*)$. This, in turn, is less than the slope of the line
through $(0,0)$ and the centre of $\mc{G}(w_*)$, which is parallel to
$(\xi_2,\xi_1)$, see Figure \ref{fig:slopes} (a).
%  \begin{figure}
%  \centering
%  \includegraphics[width=9cm]{slopesX.eps}
%  \caption{Slope of the tangent line at $(\beta_*,\alpha_*)$.}
%  \label{fig:slopes}
%  \end{figure}
We deduce that
$$0<\lim_{c\to w_*^-}\frac{\alpha_i}{\beta_i}<
\frac{\alpha_*}{\beta_*}=\lim_{c\to
w_*^-}\frac{\alpha_r}{\beta_r}<\frac{\xi_1}{\xi_2}.$$
This concludes the proof.
\end{proof}
 \begin{figure}[ht]
 \centering
 \subfigure[Slope of the tangent line at $(\beta_*,\alpha_*)$]
   {\includegraphics[width=6cm]{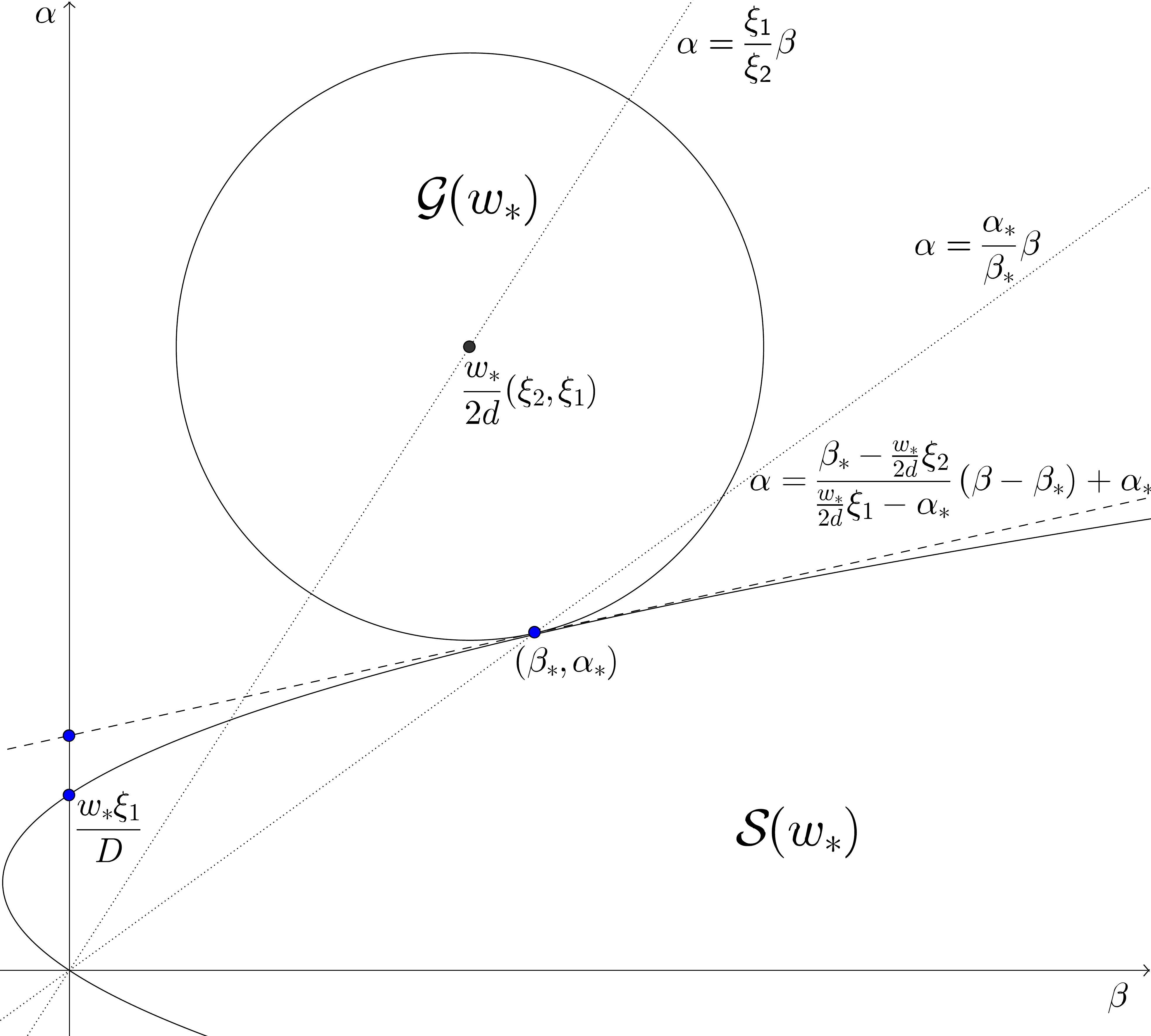}} 
 \hspace{10mm}
 \subfigure[The sets $\mc{V}_t$ and $\mc{V}^L_t$]
   {\includegraphics[width=7cm]{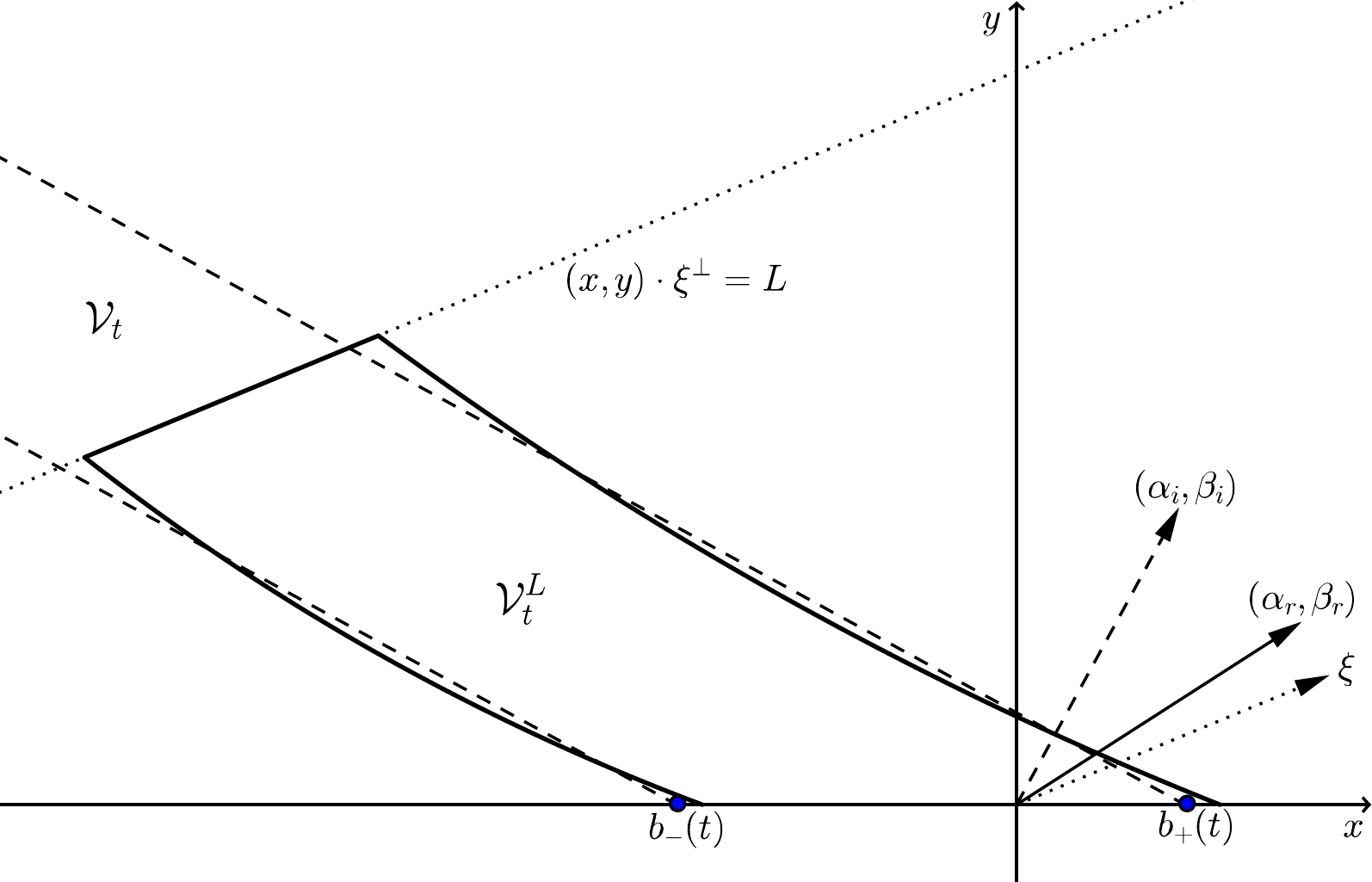}}
 \caption{Relations between the slopes of $\xi$, $(\alpha_r,\beta_r)$ and 
$(\alpha_i,\beta_i)$.}
 \label{fig:slopes}
 \end{figure}

Consider now the penalised system
\Fi{penal}
\begin{cases}
\partial_t u-D \partial_{xx} u=\nu\rest{v}{y=0}-\mu u-\e(u+v) &
x\in\R,\
t>0\\
\partial_t v-d\Delta v=(f'(0)-\e)v & (x,y)\in\O,\ t>0\\
-d\partial_y\rest{v}{y=0}=\mu u-\nu\rest{v}{y=0}
-\e(u+v) & x\in\R,\ t>0.
\end{cases}
\Ff
A small perturbation $\e$ does not affect the qualitative results of  
Section \ref{sec:exp}
\footnote{ the curves $\Sigma,\Gamma$ are replaced by some curves converging
locally uniformly to $\Sigma,\Gamma$ as $\e\to0$, together with their
derivatives}
nor that of Lemma \ref{lem:C-subsolution}. 
Thus, for $\e$ small enough, there exists $w_*^\e$ such that \eqref{penal}
admits exponential solutions in the form \eqref{exp}
with $\alpha,\beta,\gamma\in\R$ for $c\geq w_*^\e$, and
with $\alpha,\beta,\gamma\in\C\backslash\R$ satisfying \eqref{slopes} for
$c<w_*^\e$ close enough to $w_*^\e$. Moreover, $w_*^\e\to w_*$ as $\e\to0$.
We are interested in the complex ones.
Until the end of Section \ref{sec:subsolutions}, $(U,V)$ will denote an
exponential solution of \eqref{penal}, with $\e>0$
sufficiently small, $\alpha,\beta,\gamma\in\C\backslash\R$ satisfying
\eqref{slopes} and $c<w_*^\e$ close to $w_*^\e$.
Changing the sign to the imaginary part of both $U$ and $V$ we still have a
solution. Hence, by \eqref{slopes}, it is not restrictive to assume that
$\Im\alpha,\Im\beta>0$.

We set for short $\alpha_r:=\Re\alpha$, $\alpha_i:=\Im\alpha$,
$\beta_r:=\Re\beta$, $\beta_i:=\Im\beta$. 
 Since $\gamma^{-1}=(\nu+\e+d\beta)/(\mu-\e)$ by the last 
equation of \eqref{penal}, it follows that $\Arg(\gamma^{-1})\in(0,\pi/2)$. 
Resuming, we have:
\Fi{slopes2}
\alpha_r,\alpha_i,\beta_r,\beta_i>0,\qquad
\frac{\alpha_i}{\beta_i}<\frac{\alpha_r}{\beta_r}<\frac{\xi_1}{\xi_2},
\qquad\Arg(\gamma^{-1})\in(0,\pi/2).
\Ff

%%%%%%%%%%%%%%%%%%%%%%%%%%%%%%%%%%%%%%%%%%%%%%%%%%%%%%%%%%%%%%%%%%

\subsubsection{Truncating the exponential solution and the equation in the
field}

The pair $(u,v)$ defined by
$$u:=\Re U=e^{-(\alpha_r,\beta_r)\.[(x,0)-ct\xi]}
\cos((\alpha_i,\beta_i)\.[(x,0)-ct\xi]),$$
$$v:=\Re V=|\gamma|e^{-(\alpha_r,\beta_r)\.[(x,y)-ct\xi]}
\cos((\alpha_i,\beta_i)\.[(x,y)-ct\xi]-\Arg\gamma),$$
is a real solution of \eqref{penal}. 
Consider the following connected components of the positivity sets of
$u$, $v$ at time $0$:
$$\mc{U}=\left(-\frac\pi{2\alpha_i},\frac\pi{2\alpha_i}\right),\qquad
\mc{V}:=\{(x,y)\in\R^2\ :\
(\alpha_i,\beta_i)\.(x,y)\in(-\frac\pi2+\Arg\gamma,\frac\pi2+\Arg\gamma)\}.$$
As the time $t$ increases, these connected components are shifted, becoming
$$\mc{U}_t:=\mc{U}+ct\{\xi_1+\frac{\beta_i}{\alpha_i}\xi_2\},\qquad
\mc{V}_t:=\mc{V}+ct\{\xi\}.$$
In order to truncate the sets $\mc{V}_t$ we consider the reflection with
respect to the line $(x,y)\.\xi^\perp=L$, with $L>0$, where, we recall,
$\xi^\perp:=(-\cos\vt,\sin\vt)$. Namely
$$\mc{R}^L(x,y)=(x,y)+2(L-(x,y)\.\xi^\perp)\xi^\perp.$$
We then define
$$V^L(x,y,t):=V(\mc{R}^L(x,y),t),\qquad v^L:=\Re V^L.$$
The function $v-v^L$ vanishes on $(x,y)\.\xi^\perp=L$ and satisfies the second
equation of~\eqref{penal}. 
The quotient $|V^L|/|V|$ satisfies
$$\frac{|V^L|}{|V|}=\frac{e^{-(\alpha_r,\beta_r)\.[\mc{R}^L(x,y)-ct\xi]}}
{e^{-(\alpha_r,\beta_r)\.[(x,y)-ct\xi]}}=e^{-2(\alpha_r,\beta_r)\.\xi^\perp
(L-(x,y)\.\xi^\perp)}.$$
Let us call $\sigma:=(\alpha_r,\beta_r)\.\xi^\perp$. It follows
from \eqref{slopes2} that $\sigma>0$. Hence,
\Fi{vLbounds}
\frac{|V^L|}{|V|}\leq1\quad\text{ if }\,(x,y)\.\xi^\perp\leq L,\qquad
\frac{|V^L|}{|V|}\leq e^{-\sigma L}\quad\text{ if }\,(x,y)\.\xi^\perp\leq\frac
L2.
\Ff
We deduce that, when restricted to the half-plane $\{(x,y)\.\xi^\perp\leq
L/2\}$, a connected component of the set where 
$(v-v^L)$ is positive at time $t$, denoted by $\mc{V}^L_t$, converges in 
Hausdorff 
distance to $\mc{V}_t$ as $L\to\infty$, uniformly in $t\geq0$.
We can now define
$$\ul u(x,t):=\begin{cases}
               u(x,t) & \text{if }x\in\mc{U}_t\\
               0 & \text{otherwise},
              \end{cases}
\qquad
\ul v(x,y,t):=\begin{cases}
               (v-v^L)(x,y,t) & \text{if }(x,y)\in\mc{V}^L_t\\
               0 & \text{otherwise}.
              \end{cases}$$
We claim that $\ul v$ is bounded. The set $\mc{V}^L_t$ satisfies
$$\mc{V}^L_t\subset\{(x,y)\.\xi^\perp\leq L\}\cap
\{-\pi+\Arg\gamma\leq(\alpha_i,\beta_i)\.[(x,y)-ct\xi]\leq\pi+\Arg\gamma\},$$
as it is seen by noticing that $v=-|V|$ on the boundary of the latter set and
$|v^L|\leq|V|$ if $(x,y)\.\xi^\perp\leq L$.
Thus
$$\ul v\leq2|\gamma|\sup_{\su{(x,y)\.\xi^\perp\leq L}
{[(x,y)-ct\xi]\.(\alpha_i,\beta_i)\geq-\pi+\Arg\gamma}}
e^{-(\alpha_r,\beta_r)\.[(x,y)-ct\xi]}
=2|\gamma|\sup_{\su{(x,y)\.\xi^\perp\leq L}
{(x,y)\.(\alpha_i,\beta_i)\geq-\pi+\Arg\gamma}}
e^{-(\alpha_r,\beta_r)\.(x,y)}.$$
It follows from geometrical considerations that
the latter supremum is finite, c.f.~Figure \ref{fig:slopes} (b). Analytically,
one sees that it is finite if and only if
$$\{(x,y)\.(-\xi^\perp)\geq0\}\cap\{(x,y)\.(\alpha_i,\beta_i)\geq0\}\subset
\{(x,y)\.(\alpha_r,\beta_r)\geq0\},$$
which is equivalent to
% $$[\xi\.(\alpha_r,\beta_r)][\xi\.(\alpha_i,\beta_i)]\geq0,\qquad
% [(\beta_i,\alpha_i)\.(-\xi^\perp)]
% [(\beta_i,\alpha_i)\.(\alpha_r,\beta_r)]\geq0,$$
require that
$(\alpha_r,\beta_r)=\lambda_1(-\xi^\perp)+\lambda_2(\alpha_i,\beta_i)$ with
$\lambda_1,\lambda_2\geq0$.
This property holds true by \eqref{slopes2}.
% that the latter supremum is a maximum, attained at $(x,y)$ such that 
% $(x,y)\.\xi^\perp=L$, $(x,y)\.(\alpha_i,\beta_i)=-\pi+\Arg\gamma$.
We therefore have that $(\ul u,\ul v)$ is bounded. Furthermore,
$\ul v$ is a generalized subsolution of the second equation of~\eqref{penal}. 
Since $f(s)\geq(f'(0)-\e)s$ for $s>0$ small enough, 
we can renormalise $(\ul u,\ul v)$ in such a way that $\kappa\ul v$ is a
generalized subsolution of the second equation of~\eqref{Cauchy} too, for all 
$\kappa\in[0,1]$. 
% Even if it means translating in the $x$ variable, we can assume that 
% $\ul v(0,0,0)>0$.
Next, like $v$, $v^L$ satisfies $v^L((x,y)+ct\xi,t)=v^L(x,y,0)$ and thus
\eqref{sub>0} holds.
It only remains to show that $(\ul u,\ul v)$ is a generalized subsolution of the
equations on the road in the sense of Definition \ref{def:gensub}.

%%%%%%%%%%%%%%%%%%%%%%%%%%%%%%%%%%%%%%%%%%%%%%%%%%%%%%%%%%%%%%%%%%

\subsubsection{The equations on the road}

Let us write
$$\mc{U}_t=(a_-(t),a_+(t)),\qquad
\mc{V}_t\cap\{y=0\}=(b_-(t),b_+(t))\times\{0\}.$$
Since $\Arg(\gamma^{-1})\in(0,\pi/2)$ by \eqref{slopes2}, we deduce that
$$b_-(t)=a_-(t)-\frac{\Arg(\gamma^{-1})}{\alpha_i}<a_-(t)<b_-(t)+
\frac\pi{\alpha_i}=b_+(t)=a_+(t)-\frac{\Arg(\gamma^{-1})}{\alpha_i}<a_+(t).$$
We further see that
\Fi{fase-u}
\frac{u(b_\pm(t)+x,t)}{|U(b_\pm(t)+x,t)|}
=\pm\sin(-\alpha_ix+\Arg(\gamma^{-1})).
\Ff
\Fi{fase-v}
\frac{v(b_\pm(t)+x,0,t)}{|V(b_\pm(t)+x,0,t)|}
=\mp\sin(\alpha_ix).
\Ff
% As a consequence, 
% \Fi{fase}
% \frac{u(b_\pm(t),t)}{|U(b_\pm(t),t)|}
% =\pm\sin(\Arg(\gamma^{-1})),\qquad
% \frac{v(a_\pm(t),0,t)}{|V(a_\pm(t),0,t)|}=\mp\sin(\Arg(\gamma^{-1})).
% \Ff
For $t\geq0$ and $(x,0)\in\mc{V}_t$ we see that
$$x>b_-(t)=\frac1{\alpha_i}(-\frac\pi2+\Arg\gamma+ct(\alpha_i,\beta_i)\.\xi)
\geq\frac1{\alpha_i}(-\frac\pi2+\Arg\gamma),$$
whence
$$(x,0)\.\xi^\perp=-\xi_2 x<\frac{\xi_2}{\alpha_i}(\frac\pi2-\Arg\gamma).$$
It follows that $\mc{V}_t\cap\{y=0\}$ is contained in $\{(x,y)\.\xi^\perp\leq
L/4\}$ for $L$ large enough and $t\geq0$. Thus, the sets
$\mc{V}^L_t\cap\{y=0\}$ approach $(b_-(t),b_+(t))\times\{0\}$ as $L\to+\infty$,
uniformly with respect to $t\geq0$. We consider separately the two equations on
the road. Below, the time $t\geq0$ is fixed and the expressions depending on the
$y$-variable are always understood at $y=0$.

{\em The third equation of \eqref{Cauchy}.}\\
The condition involving the third equation of \eqref{Cauchy} in Definition
\ref{def:gensub} is trivially satisfied if $\ul v=0$. Otherwise, if $\ul v>0$,
then $(x,0)\in\mc{V}^L_t$ and there holds
$$-d\partial_y\ul v+\nu\ul v\leq\mu u-\e(u+v)+h|V^L|,$$
for some $h>0$ only depending on $\alpha,\beta,\xi^\perp$.
For $L$ large enough, $\mc{V}^L_t\cap\{y=0\}$ is contained in
$\{(x,y)\.\xi^\perp\leq L/2\}$ and then \eqref{vLbounds} yields
\Fi{3}
-d\partial_y\ul v+\nu\ul v\leq\mu u-\e(u+v)+h|V|e^{-\sigma L}.
\Ff
By \eqref{fase-u}, there exists $k,\delta_0>0$ only depending on $\alpha_i$ and
$\Arg(\gamma^{-1})$ such that, for $\delta\in(0,\delta_0)$,
\Fi{bounds-u}
\frac{u(x,t)}{|U(x,t)|}<-k\quad\text{if }|x-b_-(t)|<\delta,\qquad
\frac{u(x,t)}{|U(x,t)|}>k\quad\text{if }|x-b_+(t)|<\delta.
\Ff
Our aim is to show that, for $\delta$ small and $L$ large
enough independent of $t$, $(\ul u,\ul v)$ is a generalised subsolution of the
last equation of \eqref{Cauchy} for $x\in[b_-(t)-\delta,b_+(t)+\delta]$. 
Thus, up to increasing $L$ in such a way that
$\mc{V}^L_t\cap\{y=0\}\subset(b_-(t)-\delta,b_+(t)+\delta)\times\{0\}$ for all
$t\geq0$, it is a generalised subsolution of that equation everywhere.

We first focus on a neighbourhood of $b_+$, where $\ul u=u$. From \eqref{3},
using \eqref{fase-v}, \eqref{bounds-u} and recalling that $|V|=|\gamma||U|$, we
obtain, for $|x-b_+(t)|<\delta$,
\[\begin{split}
-d\partial_y\ul v+\nu\ul v-\mu\ul u &\leq-\e(u+v)+h|V|e^{-\sigma L}\\
&<[-\e(k-|\gamma|\alpha_i\delta)+h|\gamma|e^{-\sigma L}]|U|.
\end{split}\]
Choosing then $\delta\leq k/(2|\gamma|\alpha_i)$ yields
$$-d\partial_y\ul v+\nu\ul v-\mu\ul u <\left(-\frac{\e k}2+h|\gamma|e^{-\sigma
L}\right)|U|.$$
We eventually infer that, for $L$ large
enough independent of $t$, $(\ul u,\ul v)$ is a generalised subsolution of the
last equation of \eqref{Cauchy} in the $\delta$ neighbourhood of $b_+(t)$. 
Consider now points such that $|x-b_-(t)|<\delta$, where $\ul u=0$. By
\eqref{3} we get
\[\begin{split}
-d\partial_y\ul v+\nu\ul v-\mu\ul u &\leq(\mu-\e)u-\e v+h|V|e^{-\sigma L}\\
&<[-(\mu-\e)k+\e|\gamma|\alpha_i\delta+h|\gamma|e^{-\sigma L}]|U|,
\end{split}\]
provided that $\e<\mu$. Taking $\e<\mu/2$ we end up with the same inequality as
in the case $|x-b_+(t)|<\delta$ treated above. It remains the case
$x\in[b_-(t)+\delta,b_+(t)-\delta]$.
There we have that $v\geq k'|V|$, for some $k'>0$ only depending on $\alpha_i$,
$\delta$. Consequently, using the fact that $\ul u=\max(u,0)$, we obtain
\[\begin{split}
-d\partial_y\ul v+\nu\ul v &\leq(\mu-\e)u-\e v+h|V|e^{-\sigma L}\\
&\leq\mu\ul u-(\e k'-he^{-\sigma L})|V|.
\end{split}\]
We get again a subsolution for $L$ large enough.

{\em The second equation of \eqref{Cauchy}.}\\
The non-trivial case is $x\in\mc{U}_t=(a_-(t),a_+(t))$, where
$$\partial_t\ul u+\mu\ul u-\nu\ul v=(\nu-\e)v-\e u-\nu\ul v.$$
If $x\in[b_+(t),a_+(t))$ then $\partial_t\ul u+\mu\ul u-\nu\ul v\leq0$,
provided that $\e\leq\nu$.
As before, let $k,\delta_0>0$ be such that \eqref{bounds-u} holds for
$\delta\in(0,\delta_0)$. 
Using \eqref{fase-v} and the equality
$|V|=|\gamma||U|$ we get, if $|x-b_+(t)|<\delta$, 
$$\partial_t\ul u+\mu\ul u-\nu\ul v\leq
[|\nu-\e||\gamma|\alpha_i\delta-\e k]|U|,$$
which is negative for $\delta$ small, independent of $t$.
Consider the remaining case $x\in(a_-(t),b_+(t)-\delta]$.
There, from one hand $v\geq k'|V|$  with $k'$ only
depending on $\alpha_i,\gamma,\delta$, from the other, by \eqref{vLbounds},
$v^L\leq|V|e^{-\sigma L}$ provided that $L$ is large enough in such a way that
$-a_-(t)\cos\vt\leq L/2$. Hence,
$$\partial_t\ul u+\mu\ul u-\nu\ul v=\nu v^L-\e v-\e u
\leq(\nu e^{-\sigma L}-\e k')|V|.$$
We eventually infer that, for $L$ large
enough independent of $t$, $(\ul u,\ul v)$ is a generalised subsolution of the
second equation of \eqref{Cauchy}. This concludes the proof of Lemma
\ref{lem:gensub}.

%%%%%%%%%%%%%%%%%%%%%%%%%%%%%%%%%%%%%%%%%%%%%%%%%%%%%%%%%%%%%%%%%%

\section{Proof of the spreading property}
\label{sec:spreading}

In this section we show that the set $\mc{W}$ defined in Section
\ref{sec:exp} is indeed the asymptotic expansion shape of the system
\eqref{Cauchy}.
This proves \thm{main} part (i).
Moreover, by the definition of the critical angle $\vt_0$, part (iii) also
follows.

We show separately that solutions spread at most and at least with the velocity
set $\mc{W}$, c.f.~\eqref{W+} and \eqref{W-} respectively.
The upper bound \eqref{W+} follows by comparison with the planar waves of 
Section \ref{sec:exp}. The proof of \eqref{W-} is more
involved. It combines the convergence result close to the road given by
\cite{BRR2} with the existence of compactly supported subsolutions provided by
Lemma \ref{lem:gensub}. Then one concludes using a
standard Liouville-type result for strictly positive solutions.

Throughout this section, $(u,v)$ denotes a solution of \eqref{Cauchy} with an
initial datum $(u_0,v_0)\not\equiv(0,0)$ compactly supported. As already 
mentioned in the introduction, the well-posedness of the Cauchy problem is 
proved in \cite{BRR2}.

%%%%%%%%%%%%%%%%%%%%%%%%%%%%%%%%%%%%%%%%%%%%%%%%%%%%%%%%%%%%%%%%%%

\subsection{The upper bound}

\begin{proof}[Proof of \eqref{W+}]
We prove \eqref{W+} showing that, for any $\e>0$, there exists $T>0$
such that the following holds:
$$\forall\vt\in[-\pi/2,\pi/2],\ c\geq w_*(\vt)+\e,\ t\geq T,\quad
v(ct\sin\vt,ct\cos\vt,t)<\e.$$
By symmetry, we can restrict ourselves to $\vt\in[0,\pi/2]$.
Let $R>0$ be such that
$$\supp\, u_0\subset[-R,R],\qquad\supp\, v_0\subset\ol B_R.$$
For $\vt\in[-\pi/2,\pi/2]$, let $(U_\vt,V_\vt)$ be the planar wave for the
linearised system \eqref{linear} defined by \eqref{exp} 
with
$\xi=(\sin\vt,\cos\vt)$, $c=w_*(\vt)$,
$\alpha=\alpha_*(\vt)$, $\beta=\beta_*(\vt)$ and
$\gamma=\mu/(\nu+d\beta_*(\vt))$. 
It is straightforward to check that the functions $\alpha_*$ and
$\beta_*$ are continuous, hence bounded.
Since for $\vt\in[0,\pi/2]$ it holds that
$$\forall(x,y)\in\ol B_R,\quad
U_\vt(x,0)\geq e^{-|R|\alpha_*(\vt)},\quad
V_\vt(x,y,0)\geq\frac{\mu}{\nu+d\beta_*(\vt)}
e^{-|R|(\alpha_*(\vt),\beta_*(\vt))},$$
there exists $\kappa>0$, independent of $\vt$, such that all the
$\kappa(U_\vt,V_\vt)$
are above $(u,v)$ at time $0$. The pairs $\kappa(U_\vt,V_\vt)$ are still
supersolutions
of \eqref{linear}, and then of \eqref{Cauchy} because, by the KPP hypothesis,
$f'(0)\kappa V_\vt\geq f(\kappa V_\vt)$.
The comparison principle then yields that, for $\vt\in[0,\pi/2]$ and $t\geq0$,
$\kappa V_\vt\geq v$, whence, in particular, 
$$\forall c\geq0,\quad
v(ct\sin\vt,ct\cos\vt,t)\leq\frac{\kappa\mu}{\nu+d\beta_*(\vt)}
e^{-(c-w_*(\vt))t(\alpha_*(\vt),\beta_*(\vt))\.(\sin\vt,\cos\vt)}.$$
Notice now that the functions $\alpha_*$ and
$\beta_*$ are strictly positive, excepted at $0$ where $\alpha_*=0$,
$\beta_*\neq0$, and
at $\pi/2$ where $\alpha_*\neq0$, $\beta_*=0$ if $D\leq2d$. 
It follows that $(\alpha_*(\vt),\beta_*(\vt))\.(\sin\vt,\cos\vt)$ is positive
on $[0,\pi/2]$, thus it has a positive minimum by continuity.
The result then follows.
\end{proof}

%%%%%%%%%%%%%%%%%%%%%%%%%%%%%%%%%%%%%%%%%%%%%%%%%%%%%%%%%%%%%%%%%%

\subsection{The lower bound}
\label{sec:lower}

% \begin{lemma}\label{lem:triangle}
% Let $\vt\in(-\pi/2,\pi/2)$, $c_f\in(0,w_*(\vt))$, $c_r\in(0,w_*(\pi/2))$ and
% call
% $$\mc{T}:=\{a c_f(\sin\vt,\cos\vt)+b c_r(1,0)\ :\ 
% a\geq0,\ a+|b|\leq1\}.$$
% Then, 
% $$\inf_{\su{t\geq1}{\frac1t(x,y)\in\mc{T}}}v(x,y,t)>0.$$
% \end{lemma}

\begin{proof}[Proof of \eqref{W-}]
% It is not restrictive to assume that $\e<c_K$.
We first show that $v$ is
bounded from below away from $0$ in some  
suitable expanding sets. This allows us to conclude by means of a standard
Liouville-type result for entire solutions with positive infimum.

Step 1.{ \em  For $\e\in(0,c_K)$ and $\vt\in[-\pi/2,\pi/2]$, there exist
$(\hat x,\hat y)\in\ol\O$ and an open set $A$ in the relative topology of
$\ol\O$ such that
$$A\supset\{r(\sin\vt,\cos\vt)\ :\ 0\leq r\leq w_*(\vt)-\e\},\qquad
\inf_{\su{t\geq1}{(x,y)\in t A}}v(\hat x+x,\hat y+y,t)>0.$$}\\
Consider the case $\vt\neq\pm\pi/2$.
Let $(\ul u,\ul v)$ be a generalised subsolution given by Lemma
\ref{lem:gensub}, with $c>w_*(\vt)-\e>0$, and set 
$$\delta:=\frac{c-w_*(\vt)+\e}{2c}\,\in(0,1/2).$$
% Let $T>0$ be such that $\rest{v}{t=T}>$
Even if it means multiplying $\ul u$, $\ul v$ by a small factor $\kappa>0$, we
can assume that
$\sup\,\rest{\ul u}{t=0}<\nu/\mu$, $\sup\,\rest{\ul v}{t=0}<1$.
We now make use of the spreading result from \cite{BRR2}, summarized 
here by \eqref{e1.5}. Recalling that the $c_*$ there coincides 
with $w_*(\pi/2)$, the second limit implies the
existence of $\tau>0$
such that, for $\lambda\in(\delta,1]$ and $|c'|<w_*(\pi/2)-\e/2$,
the following holds true:
$$\forall(x,y)\in\ol\O,\ t\geq\tau,\quad
v(x+c'\lambda t,y,\lambda t)>\ul v(x,y,0),\quad
u(x+c'\lambda t,\lambda t)>\ul u(x,0).$$
Then, by comparison, $v(x+c'\lambda t,y,\lambda t+s)\geq\ul v(x,y,s)$ for
$t\geq\tau$ and $s\geq0$, from
which, taking $s=(1-\lambda)t$ and $(x,y)=(\hat
x,\hat y)+c(1-\lambda)t(\sin\vt,\cos\vt)$, where $(\hat
x,\hat y)$ is such that \eqref{sub>0} holds, we get
$$v(\hat x+[c(1-\lambda)\sin\vt+c'\lambda]t,\hat y+[c(1-\lambda)\cos\vt]t,t)
>\ul v(\hat x,\hat y,0)>0.
$$
Namely,
$$\inf_{\su{t\geq\tau}{(x,y)\in tA}}v(\hat x+x,\hat y+y,t)>0,$$
where $A$ is the following set: 
$$A=\{(c(1-\lambda)\sin\vt+c'\lambda,c(1-\lambda)\cos\vt)\ :\
\delta<\lambda\leq1,\ |c'|<w_*(\pi/2)-\e/2\},$$
which is open in the relative topology of $\ol\O$.
By the choice of $\delta$, restricting to the values $c'=0$ and
$2\delta\leq\lambda\leq1$ in the expression of $A$ we recover the segment
$\{r(\sin\vt,\cos\vt)\ :\ 0\leq r\leq w_*(\vt)-\e\}$. 
While, restricting to $\lambda=1$ and $|c'|\leq w_*(\pi/2)-\e$, we
obtain $[-w_*(\pi/2)+\e,w_*(\pi/2)-\e]\times\{0\}$,
which is the sought segment in the case $\vt=\pm\pi/2$.
% It is cleat that, up to increasing $\tau$, there exists another open set
% $A_\vt$ still containing the above two segments, and such that $t A_\vt\subset
% t A+\{(\hat x,\hat y)\}$. 
The proof of the step 1 is thereby complete, because the minimum of $v$ 
on compact subsets of $\ol\O\times[1,\tau]$ is positive by the strong comparison
principle with $(0,0)$.

% Step 2.{ \em $\forall\e\in(0,c_K)$, there is a rectangle
% $Q=[-w_*(\pi/2)-\e/2,w_*(\pi/2)+\e/2]\times[0,\delta]$ such that
% $$A\supset\{r(\sin\vt,\cos\vt)\ :\ 0<r\leq w_*(\vt)-\e\},\qquad
% \inf_{\su{t\geq1}{\frac1t(x,y)\in Q}}v(x,y,t)>0.$$}\\
% Let $(\ul u,\ul v)$ be the function given by Lemma \ref{lem:gensub}, associated
% with $\vt=0$ and, say, $c>c_K/2$.
% Let $R>0$ be such that $\supp\rest{\ul v}{t=0}\subset B_R$.
% Up to multiplying $(\ul u,\ul v)$ by $\kappa>0$ small enough, we can assume that
% $\rest{\ul v}{t=0}\leq1/2$.
% Let $\delta\in(0,1)$ to be chosen later.
% We know from \cite{BRR2} \nota{quote} that, for $\tau$ large enough, $v$
% satisfies
% $$\forall t\geq\tau,\quad
% \inf_{\su{|x|\leq w_*(\pi/2)(1-\delta)t}{0\leq y\leq
% R}}v(x,y,t)>\frac12.$$
% For $t\geq\tau$, $|z|\leq w_*(\pi/2)(1-\delta)t-R$ and $(x,y)\in\ol\O$, we find
% that $v(x,y,t)>1/2\geq \ul v(x-z,y,0)$. Whence, by comparison,
% $$\forall t\geq\tau,\ |z|\leq w_*(\pi/2)(1-\delta)t-R,\quad
% v(\hat x,\hat y,t)>\frac12.$$

% Step 2.{ \em For $\e\in(0,c_K)$, there exists $T>0$ such that
% $$\inf_{\su{(x,y)\in\ol\O,\ t\geq T}{\dist(\frac1t(x,y),
% \ol\O\backslash\W)\geq\e}}v(x,y,t)>0.$$}\\
% The family $(A_\vt)_{\vt\in[-\pi/2,\pi/2]}$ is an open cover of the compact
% set $\{(x,y)\in\ol\O\ :\ \dist((x,y),\ol\O\backslash\W)\geq\e\}$.
% Let $(A_{\vt_i})_{i=1,\dots,N}$ be a finite subcover.
% 
% given by the step
% 1. By compactness, there is a subfamily 

Step 2.{ \em Conclusion.}\\
Fix $\e\in(0,c_K)$.
Let $((x_n,y_n))_{n\in\N}$ be a sequence in $\ol\O$ and $\seq{t}$
a sequence in $\R_+$ such that 
$$\limn t_n=+\infty,\qquad \forall n\in\N, \quad
\dist\left(\frac1{t_n}(x_n,y_n),\ol\O\backslash\W\right)>\e.$$
By the boundedness of $v$ it follows that $(v(x_n,y_n,t_n))_{n\in\N}$ converges
up to subsequences. In order to prove \eqref{W-} we need to show that the limits
of all converging subsequences are equal to 1. Let us still call
$(v(x_n,y_n,t_n))_{n\in\N}$ one of such subsequences and set
$$m:=\limn v(x_n,y_n,t_n).$$
If $\seq{y}$ admits a bounded subsequence $(y_{n_k})_{k\in\N}$ then,
since 
$$\e<\dist\left(\frac1{t_{n_k}}(x_{n_k},y_{n_k}),\ol\O\backslash\W\right)\leq
\dist\left(\frac{x_{n_k}}{t_{n_k}},\R\backslash[-w_*(\pi/2),w_*(\pi/2)]\right)
+\frac{y_{n_k}}{t_{n_k}},$$
we derive
$|x_{n_k}|\leq(w_*(\pi/2)-\e/2)t_{n_k}$ for $k$ large enough.
It then follows from \eqref{e1.5} that $m=1$ in this case. 
Consider now the case where $\seq{y}$ diverges.
Let us write $1/t_n(x_n,y_n)=r_n(\sin\vt_n,\cos\vt_n)$, with
$|\vt_n|\leq\pi/2$ and $0\leq r_n\leq w_*(\vt_n)-\e$, and call $\vt$, $r$ the
limit of (a subsequence of) $\seq{\vt}$, $\seq{r}$ respectively.
The continuity of $w_*$ yields $0\leq r\leq w_*(\vt)-\e$.
% Notice that it may happen that $r=0$ or that $\vt=\pm\pi/2$.
% but, in any case, $\limn
% y_n=+\infty$,
% because $v$ converges to $1$ as $t\to\infty$, uniformly in $|x|\leq
% (w_*(\vt)-\e/2)t$ and $y$ bounded.
Consider the sequence of functions $\seq{v}$ defined by
$$v_n(x,y,t):=v(x+x_n,y+y_n,t+t_n).$$
For $n$ large enough, the $v_n$ are defined in any given
$K\subset\subset\R^2\times\R$ and, by interior parabolic estimates (see, e.g.,
\cite{Lady}) 
they are uniformly bounded in $C^{2,\delta}(K)$ and $C^{1,\delta}(K)$
with respect to the space and time variables respectively, for some
$\delta\in(0,1)$. Hence, $\seq{v}$
converges (up to subsequences) locally uniformly to 
a solution $v_\infty$ of 
\Fi{vinfty}
\partial_t v_\infty-d\Delta v_\infty=f(v_\infty),\quad (x,y)\in\R^2,\ t\in\R.
\Ff
Moreover, $v_\infty(0,0,0)=m$.
Consider the point $(\hat x,\hat y)$ and the set $A$ given by the step 1,
associated with $\e$ and $\vt$. For $(x,y)\in\R^2$ and $t\in\R$, we see that
$$\limn\frac1{t+t_n}(x+x_n-\hat x,y+y_n-\hat y)=r(\sin\vt,\cos\vt)\in A.$$
Thus, for $n$ large enough, since $y+y_n-\hat y>0$ and 
$A$ is open in $\ol\O$, we have that $(x+x_n-\hat x,y+y_n-\hat y)\in
(t+t_n)A$, whence $v_n(x,y,t)\geq h>0$, with $h$
independent of $(x,y,t)$. It follows that $v_\infty\geq h$ in all
$\R^2\times\R$. 
Since $f>0$ in $(0,1)$ and $f<0$ in $(1,+\infty)$, it is straightforward to
see by comparison with solutions of the ODE $z'=f(z)$ in $\R$, that the unique
bounded solution of \eqref{vinfty} which is bounded from below away from $0$ is
$v_\infty\equiv1$. As a consequence,
% 
% Let $z_\pm:[0,+\infty)\to\R$ be the solutions of
% $z_\pm'=f(z_\pm)$ with
% $z_-(0)=h$ and $z_+(0)=\sup v_\infty$ respectively. Clearly, $z_\pm(+\infty)=1$.
% For $T>0$ we have that $z_-(0)\leq\rest{v}{t=-T}\leq z_+(0)$, and then, by
% comparison,
% $$\forall (x,y)\in\R^2,\ t\geq0,\quad z_-(t)\leq v_\infty(x,y,t-T)\leq z_+(t).$$
% Taking $(x,y)=(0,0)$, $t=T$ and then letting $T$ go to $+\infty$ in the above
% inequalities we eventually derive 
$m=v_\infty(0,0,0)=1$, which concludes the
proof of \eqref{W-}.
\end{proof}

%%%%%%%%%%%%%%%%%%%%%%%%%%%%%%%%%%%%%%%%%%%%%%%%%%%%%%%%%%%%%%%%%%

\section{Further properties of the function $w_*$}
\label{sec:w*}

We now study the function $w_*:[-\pi/2,\pi/2]\to\R_+$ defined in Section
\ref{sec:exp}. This will complete the proof of \thm{main} part (ii).
Since $w_*$ is even, we restrict ourselves to $[0,\pi/2]$. 
If $D\leq2d$ then $w_*\equiv c_K$. Thus, throughout this section, we assume that
$D>2d$. We recall that $(\beta_*(\vt),\alpha_*(\vt))$ is the unique
intersection point between the sets $\mc{S}(w_*(\vt))$ and $\mc{G}(w_*(\vt))$
associated with $\xi=(\sin\vt,\cos\vt)$.

We start with the following observation.
\begin{lemma}\label{lem:tangent}
The function $w_*$ satisfies
$$\forall \vt\in[\vt_0,\pi/2],\ \t\vt\in[0,\pi/2],\quad
w_*(\t\vt)\leq\frac{\cos(\vt-\vp_*(\vt))}{\cos(\t\vt-\vp_*(\vt))}\,w_*(\vt),$$
where $\vp_*(\vt)=\arctan\alpha_*(\vt)/\beta_*(\vt)$.
\end{lemma}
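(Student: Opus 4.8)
The plan is to prove $w_*(\t\vt)\le\t c$ for the explicit value $\t c:=\frac{\cos(\vt-\vp_*(\vt))}{\cos(\t\vt-\vp_*(\vt))}\,w_*(\vt)$ by exhibiting a single point of $\mc S(\t c)\cap\mc G(\t c)$ relative to the direction $\t\xi:=(\sin\t\vt,\cos\t\vt)$; the characterisation of $w_*$ recalled in Section~\ref{sec:exp} (namely $\mc S(c)\cap\mc G(c)\ne\emptyset\iff c\ge w_*$) then yields the inequality. The crucial, elementary observation is that membership of a point $(\beta,\alpha)$ in $\mc S(c)$ and in $\mc G(c)$ is expressed, reading \eqref{algebraic} with ``$=$'' replaced by the appropriate inequality, as
\[
c\,\xi\cdot(\alpha,\beta)\ \ge\ D\alpha^2-\chi(d\beta),\qquad\qquad
c\,\xi\cdot(\alpha,\beta)\ \ge\ d(\alpha^2+\beta^2)+f'(0),
\]
so that the dependence on the pair (direction, speed) occurs only through the scalar $c\,\xi\cdot(\alpha,\beta)$, the right-hand sides involving neither $c$ nor $\xi$.

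With this in hand I would take $(\beta_*,\alpha_*):=(\beta_*(\vt),\alpha_*(\vt))$, the unique point of $\mc S(w_*(\vt))\cap\mc G(w_*(\vt))$ for the direction $\xi=(\sin\vt,\cos\vt)$. For $D>2d$ and $\vt\in[\vt_0,\pi/2]$ one has $\alpha_*,\beta_*>0$: at $\vt=\vt_0$ this point equals $C(c_K)=\frac{c_K}{2d}(\cos\vt_0,\sin\vt_0)$; for $\vt\in(\vt_0,\pi/2)$ it is the limit as $c\to w_*(\vt)^-$ of the real parts $(\beta_r,\alpha_r)$ of the complex roots in Lemma~\ref{lem:C-subsolution}, so its strict positivity is inherited from \eqref{slopes}; at $\vt=\pi/2$ it is the decay vector of \cite{BRR2}, where $\beta_*>0$ precisely because $D>2d$. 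Consequently $\vp_*(\vt)=\arctan(\alpha_*/\beta_*)\in(0,\pi/2)$, and setting $\rho:=\sqrt{\alpha_*^2+\beta_*^2}>0$ we may write $(\alpha_*,\beta_*)=\rho(\sin\vp_*,\cos\vp_*)$, whence for every $\psi\in[0,\pi/2]$
\[
(\sin\psi,\cos\psi)\cdot(\alpha_*,\beta_*)=\rho\cos(\psi-\vp_*)>0,
\]
since $\psi-\vp_*\in(-\pi/2,\pi/2)$. Applying this with $\psi=\vt$ and with $\psi=\t\vt$ shows that $\t c$ is a well-defined positive real and that
\[
\t c\,\bigl(\t\xi\cdot(\alpha_*,\beta_*)\bigr)=w_*(\vt)\,\bigl(\xi\cdot(\alpha_*,\beta_*)\bigr).
\]

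To conclude: the two displayed inequalities hold at $(\beta_*,\alpha_*)$ with $(c,\xi)=(w_*(\vt),\xi)$ because $(\beta_*,\alpha_*)\in\mc S(w_*(\vt))\cap\mc G(w_*(\vt))$. Since the only spot in which $(c,\xi)$ enters is the scalar $c\,\xi\cdot(\alpha_*,\beta_*)$, and this scalar has the same value for $(w_*(\vt),\xi)$ and for $(\t c,\t\xi)$, the same two inequalities hold with $(c,\xi)=(\t c,\t\xi)$. Thus $(\beta_*,\alpha_*)\in\mc S(\t c)\cap\mc G(\t c)$ for the direction $\t\xi$ (the second inequality in particular forces $\t c\ge c_K$, so $\mc G(\t c)$ is genuinely a disc), hence $\mc S(\t c)\cap\mc G(\t c)\ne\emptyset$, and therefore $w_*(\t\vt)\le\t c=\frac{\cos(\vt-\vp_*(\vt))}{\cos(\t\vt-\vp_*(\vt))}w_*(\vt)$, which is the claim.

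The only step needing a little care is the first one — verifying that $\alpha_*,\beta_*>0$ on all of $[\vt_0,\pi/2]$, so that $\vp_*\in(0,\pi/2)$ and the cosine quotient is a meaningful positive number; this requires patching together the value at $\vt_0$, the estimate \eqref{slopes} inside $(\vt_0,\pi/2)$, and the result of \cite{BRR2} at $\vt=\pi/2$. Everything after that is a one-line substitution based on the observation that $c\,\xi\cdot(\alpha,\beta)$ is the sole dependence of both defining inequalities on $(\xi,c)$.
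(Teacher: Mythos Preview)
Your argument is correct and is essentially the paper's own proof, recast in the language of the sets $\mc S(c)$, $\mc G(c)$ rather than of the exponential solution $(U,V)$: both rest on the single observation that the algebraic system \eqref{algebraic} depends on $(c,\xi)$ only through the scalar $c\,\xi\.(\alpha,\beta)$, so that the critical pair $(\alpha_*(\vt),\beta_*(\vt))$ for the direction $\xi$ at speed $w_*(\vt)$ is automatically admissible for the direction $\t\xi$ at the speed $\t c$ keeping this scalar fixed. The extra care you take to justify $\alpha_*,\beta_*>0$ on $[\vt_0,\pi/2]$ is not needed as a separate patching argument: this positivity is already noted in the proof of \eqref{W+} (Section~\ref{sec:spreading}) and follows directly from the geometric picture of Section~\ref{sec:exp}.
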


\begin{proof}
Take $\vt,\t\vt$ as in the statement of the lemma.
The pair $(U,V)$ defined by \eqref{exp}, with 
$\xi=(\sin\vt,\cos\vt)$,
$c=w_*(\vt)$, $\alpha=\alpha_*(\vt)$, $\beta=\beta_*(\vt)$ and
$\gamma=\mu/(\nu+d\beta_*(\vt))$, is a solution of \eqref{linear}. We call
$$\t\xi:=(\sin\t\vt,\cos\t\vt),\qquad
\t
c:=\frac{(\alpha_*(\vt),\beta_*(\vt))\.\xi}{(\alpha_*(\vt),\beta_*(\vt))\.\t\xi}
\,w_*(\vt),$$
and we rewrite $(U,V)$ in the following way:
$$(U(t,x),V(t,x,y))=(e^{-(\alpha_*(\vt),\beta_*(\vt))\.((x,0)-\t
ct\t\xi)},\gamma
e^{-(\alpha_*(\vt),\beta_*(\vt))\.((x,y)-\t ct\t\xi)}).$$
Thus, by the definition of $w_*(\t\vt)$, we derive
\begin{equation}\label{eq:line}
w_*(\t\vt)\leq\t c=
\frac{(\alpha_*(\vt),\beta_*(\vt))\.\xi}{(\alpha_*(\vt),\beta_*(\vt))\.\t\xi}
\,w_*(\vt).
\end{equation}
The result then follows.
\end{proof}

\begin{proposition}\label{pro:w*}
The function $w_*$ satisfies
$$w_*\in C^1([0,\pi/2]),\qquad
w_*=c_K\ \text{ in }[0,\vt_0],\qquad 
w_*'>0\ \text{ in }(\vt_0,\pi/2].$$
\end{proposition}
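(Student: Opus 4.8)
The identities $w_*=c_K$ on $[0,\vt_0]$ and $w_*(\vt)>c_K$ for $\vt\in(\vt_0,\pi/2]$ are already recorded in Section~\ref{sec:exp}, so the content of the proposition is the $C^1$ regularity of $w_*$ and the strict sign of $w_*'$ beyond $\vt_0$. The plan is: \emph{(1)} show $w_*$ is analytic on the open interval $(\vt_0,\pi/2)$ by reducing the condition ``$\mc S(c)\cap\mc G(c)\neq\emptyset$'' to a scalar equation $\Psi(c,\vt)=0$ with $\partial_c\Psi\neq0$, so that the implicit function theorem applies; \emph{(2)} compute $w_*'$ on $(\vt_0,\pi/2)$ from Lemma~\ref{lem:tangent} and fix its sign by the slope inequality of Section~\ref{sec:subsolutions}; \emph{(3)} treat the two endpoints $\vt_0$ and $\pi/2$ by a mean value theorem argument. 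Step~(1) — in particular the smoothness of $\Psi$ and the sign of $\partial_c\Psi$ — is where the real work is; the rest is soft.

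\emph{Step 1.} With $\xi=(\sin\vt,\cos\vt)$, set $F(\beta,\alpha,c):=c\xi_1\alpha+c\xi_2\beta-D\alpha^2+\chi(d\beta)$. Since $\alpha_D^\pm(c,\beta)$ are exactly the roots in $\alpha$ of $F(\beta,\alpha,c)=0$, one has $\mc S(c)=\{F(\cdot,\cdot,c)\geq0\}$, hence $\mc S(c)\cap\mc G(c)\neq\emptyset\Leftrightarrow\max_{\mc G(c)}F\geq0$. For $\vt\in(\vt_0,\pi/2)$ the function $F(\cdot,\cdot,c)$ is strictly concave ($\partial_{\alpha\alpha}F=-2D<0$, $\partial_{\beta\beta}F=d^2\chi''(d\beta)<0$, $\partial_{\alpha\beta}F=0$) and has no critical point (as $\partial_\beta F=c\xi_2+d\chi'(d\beta)>0$ when $c>0$, $\xi_2>0$, $\chi'>0$), so for $c>c_K$, when $\mc G(c)$ is a genuine disc of radius $r(c)>0$, the maximum of $F$ over $\mc G(c)$ is attained at a unique point $p(c,\vt)$ lying on the circle $\Gamma(c)$. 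Parametrising $\Gamma(c)$ by an angle $\theta$, the maximiser $\theta(c,\vt)$ is a nondegenerate critical point of $\theta\mapsto F(C(c)+r(c)(\cos\theta,\sin\theta),c)$ — its second $\theta$-derivative there is strictly negative, by strict concavity of $F$ and $r(c)>0$ — so $\theta(c,\vt)$, and therefore $\Psi(c,\vt):=\max_{\mc G(c)}F$, is analytic on $\{c>c_K\}\times(\vt_0,\pi/2)$. Moreover $\partial_c\Psi>0$: by the envelope theorem and the monotonicity $\mc G(c)\subset\mc G(c')$ for $c_K\le c\le c'$, $\partial_c\Psi\geq\partial_c F=\xi_1\alpha+\xi_2\beta$ at $p(c,\vt)$, which is positive since $\mc G(c)\subset\{(\beta,\alpha)\cdot(\xi_2,\xi_1)>0\}$. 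Finally $\Psi(c,\vt)\geq0\Leftrightarrow c\geq w_*(\vt)$, and $\Psi(w_*(\vt),\vt)=0$ for $\vt>\vt_0$ (there $w_*(\vt)>c_K$, and the single point of $\mc S(w_*)\cap\mc G(w_*)$ lies on $\{F=0\}$, else $\{F>0\}$ would meet $\mc G(w_*)$ in a nonempty open set). Since $\partial_c\Psi(w_*(\vt),\vt)>0$, the implicit function theorem applied to $\Psi(c,\vt)=0$ gives that $w_*$ — and likewise $\alpha_*,\beta_*$, read off from $p(w_*(\vt),\vt)$ — is analytic on $(\vt_0,\pi/2)$.

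\emph{Step 2.} Fix $\vt\in(\vt_0,\pi/2)$. Recall $\alpha_*(\vt),\beta_*(\vt)>0$ (as $D>2d$), so $\vp_*(\vt)=\arctan(\alpha_*(\vt)/\beta_*(\vt))\in(0,\pi/2)$ and $\vt-\vp_*(\vt)\in(-\pi/2,\pi/2)$; hence $g_\vt(\t\vt):=\frac{\cos(\vt-\vp_*(\vt))}{\cos(\t\vt-\vp_*(\vt))}w_*(\vt)$ is smooth near $\t\vt=\vt$, with $g_\vt(\vt)=w_*(\vt)$ and $g_\vt'(\vt)=w_*(\vt)\tan(\vt-\vp_*(\vt))$. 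By Lemma~\ref{lem:tangent}, $g_\vt-w_*\geq0$ on $[0,\pi/2]$; since this difference vanishes at the interior point $\vt$ and $w_*$ is differentiable there by Step~1, the first derivatives coincide:
\[w_*'(\vt)=g_\vt'(\vt)=w_*(\vt)\tan\bigl(\vt-\vp_*(\vt)\bigr).\]
Now $\alpha_*(\vt)/\beta_*(\vt)<\xi_1/\xi_2$ — the slope inequality established in the proof of Lemma~\ref{lem:C-subsolution} (cf.\ Figure~\ref{fig:slopes}) — so $\vp_*(\vt)<\vt$, whence $\tan(\vt-\vp_*(\vt))>0$ and $w_*'(\vt)>0$ on $(\vt_0,\pi/2)$.

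\emph{Step 3.} The point $(\beta_*(\vt),\alpha_*(\vt))$, hence $\vp_*(\vt)$, is continuous on $[\vt_0,\pi/2]$ (continuity of $w_*$, continuous dependence of $\mc S(c),\mc G(c)$ on $(c,\vt)$, and uniqueness of the intersection point; as $\vt\downarrow\vt_0$ the disc $\mc G(w_*(\vt))$ collapses to the single point $C(c_K)$); and at $\vt_0$, $\mc G(c_K)=\{C(c_K)\}$ forces $(\beta_*(\vt_0),\alpha_*(\vt_0))=C(c_K)=\frac{c_K}{2d}(\cos\vt_0,\sin\vt_0)$, so $\vp_*(\vt_0)=\vt_0$. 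Therefore $\ell(\vt):=w_*(\vt)\tan(\vt-\vp_*(\vt))$ is continuous on $[\vt_0,\pi/2]$ with $\ell(\vt_0)=c_K\tan0=0$. By Step~2, $w_*'=\ell$ on $(\vt_0,\pi/2)$, while $w_*'=0=\ell$ on $(0,\vt_0)$; since $\lim_{\vt\to\vt_0}\ell(\vt)=0$, the mean value theorem (on $[\vt_0,\vt_0+h]$ and on $[\vt_0-h,\vt_0]$) shows that $w_*'(\vt_0)$ exists and equals $0$, so $w_*\in C^1([0,\pi/2))$ with $w_*'$ continuous. The same argument at $\pi/2$, with left increments only, gives that the left derivative of $w_*$ at $\pi/2$ equals $\lim_{\vt\to\pi/2^-}\ell(\vt)=w_*(\pi/2)\tan(\pi/2-\vp_*(\pi/2))>0$. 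Hence $w_*\in C^1([0,\pi/2])$ and $w_*'>0$ on $(\vt_0,\pi/2]$; combined with $w_*\equiv c_K$ on $[0,\vt_0]$ this proves the proposition, the statement on $[-\pi/2,\pi/2]$ following by evenness.
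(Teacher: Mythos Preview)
Your proof is correct and follows essentially the same route as the paper's: the implicit function theorem for smoothness on $(\vt_0,\pi/2)$, Lemma~\ref{lem:tangent} to extract the formula $w_*'(\vt)=w_*(\vt)\tan(\vt-\vp_*(\vt))$, the slope inequality from the proof of Lemma~\ref{lem:C-subsolution} for the sign, and the collapse of $\mc G(w_*(\vt))$ to $C(c_K)$ for continuity of $w_*'$ at $\vt_0$. The only difference is that you spell out the implicit function theorem argument in Step~1 via the scalar function $\Psi(c,\vt)=\max_{\mc G(c)}F$, whereas the paper leaves this as a one-line remark; your version is more detailed but not a different idea.
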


\begin{proof}
The fact that $w_*=c_K$ in $[0,\vt_0]$ is just what defines $\vt_0$,
see Section \ref{sec:exp}.
The smoothness of $w_*$ outside the point $\vt_0$ is an easy consequence
of the implicit function theorem. 
% We use the same notation as in the previous section, but we
% emphasize the dependence on $\vt=\arctan(\xi_1/\xi_2)$ of the various terms.
% The angle $\vt_0$ is characterised by the following
% property:
% $$C(c_K,\vt_0)\in\Sigma(c_K,\vt_0).$$ 
% Since $C$ and $\alpha_D^+$ are of class $C^1$ with respect to $c$ and $\vt$, 
% and $r'(c_K)=+\infty$, it follows that $w_*$ is differentiable at $\vt_0$,
% with $w_*(\vt_0)=0$.
Lemma \ref{lem:tangent} implies that, for fixed $\vt\in(\vt_0,\pi/2)$, 
the smooth function
$\t\vt\mapsto\frac{\cos(\vt-\vp_*(\vt))}{\cos(\t\vt-\vp_*(\vt))}\,w_*(\vt)$
touches $w_*$ from above at the point $\vt$, whence we derive
$$\forall\vt\in(\vt_0,\pi/2),\quad w_*'(\vt)=\tan(\vt-\vp_*(\vt)) w_*(\vt).$$
In particular, $w_*'(\pi/2)=w_*(\pi/2)\beta_*(\pi/2)/\alpha_*(\pi/2)>0$.
For $\vt\in(\vt_0,\pi/2)$, we deduce that $w_*'(\vt)>0$ if and only if 
$\vt>\vp_*(\vt)$,
which is equivalent to $\tan\vt>\alpha_*(\vt)/\beta_*(\vt)$. 
Calling as usual $\xi:=(\sin\vt,\cos\vt)$, this inequality reads
$\xi_1/\xi_2>\alpha_*(\vt)/\beta_*(\vt)$, which holds true by geometrical
considerations, as already seen in the proof of Lemma \ref{lem:C-subsolution},
see Figure \ref{fig:slopes} (a).
%
% We show with geometrical arguments that $\vt>\vp_*(\vt)$ for
% $\vt\in(\vt_0,\pi/2)$, which implies $w_*'(\vt)>0$.
% The centre of $\Gamma((w_*(\vt))$ is 
% $$C(w_*(\vt))=\frac{w_*(\vt)}{2d}(\cos(\vt),\sin(\vt)).$$
% Hence, $\vt$ coincides with the angle formed by the vector $C(w_*(\vt))$ with
% the $\beta$ axis. On the other hand, $\vp_*(\vt)$ is the angle formed by the
% vector $(\beta_*(\vt),\alpha_*(\vt))$ with the $\beta$ axis (see figure
% \ref{fig:monotone}). The inequality $\vt>\vp_*(\vt)$ then follows. 
%
As $\vt\to\vt_0^+$, the disc $\mc{G}(w_*(\vt))$ collapses to the point
$c_K/2d(\cos\vt_0,\sin\vt_0)$, whence $w_*(\vt)\to c_K$,
$\vp_*(\vt)\to\vt_0$ and
eventually
$w_*'(\vt)\to0$.
% 
% We deduce that 
% $$\lim_{\vt\to\vt_0^+}\vp_*(\vt)=\vp(\vt_0),\qquad
% \lim_{\vt\to\vt_0^+}w_*'(\vt)=0.$$
This shows that $w_*'$ is continuous at $\vt_0$ too.
\end{proof}
% 
% \begin{figure}[ht]
% \psfrag{C}{\footnotesize$C(w_*(\vt))$}
% \psfrag{bS}{\footnotesize$\Sigma(w_*(\vt))$}
% \psfrag{t}{\scriptsize$\vt$}\psfrag{f}{\scriptsize$\vp_*(\vt)$}
% \psfrag{a}{\footnotesize$\alpha$}\psfrag{b}{\footnotesize$\beta$}
% \psfrag{(b,a)}{\scriptsize$(\beta_*(\vt),\alpha_*(\vt))$}
% \begin{center}
% \includegraphics[height=7cm]{monotone.eps}
% \caption{The angles $\vt$ and $\vp_*(\vt)$.}
% \label{fig:monotone}
% \end{center}
% \end{figure}
% 

To conclude the proof of \thm{main} part (ii) it remains to show that 
$\mc{W}$ is convex and that 
$$\W\supsetneq\ul{\W}:=\conv\big((\ol B_{c_K}\cap\ol\O)\cup
[-c_*,c_*]\times\{0\}\big),$$
where, we recall, $c_*=w_*(\pi/2)$.
Proposition \ref{pro:w*} implies that $\partial\mc{W}$ is of class $C^1$, except
at the extremal points $(\pm c_*,0)$. The exterior unit normal to $\mc{W}$ at 
those points is understood as the limit of the normals to points of 
$\O\cap\partial\W$ converging to $(\pm c_*,0)$.

\begin{proposition}\label{pro:W}
The set $\mc{W}$ is strictly convex and, 
% for $\vt\in[\vt_0,\pi/2]$, its exterior unit normal 
% $\mathbf{n}(\vt)$ at the point $w_*(\vt)(\sin\vt,\cos\vt)$ is given by
% $$\mathbf{n}(\vt)=(\alpha_*(\vt),\beta_*(\vt)).$$
for $\vt\in(\vt_0,\pi/2]$, its 
exterior unit normal at the point $w_*(\vt)(\sin\vt,\cos\vt)$ is parallel to 
$(\alpha_*(\vt),\beta_*(\vt))$. 

In particular, $\mc{W}\supsetneq\ul{\W}$.
\end{proposition}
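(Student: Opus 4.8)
The plan is to prove Proposition \ref{pro:W} in two stages: first establish strict convexity of $\mc{W}$ using the curvature information encoded in Proposition \ref{pro:w*} (and the explicit formula $w_*'(\vt)=\tan(\vt-\vp_*(\vt))w_*(\vt)$ derived there), then identify the exterior normal and deduce the strict inclusion $\mc{W}\supsetneq\ul{\mc{W}}$.

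\medskip
\noindent\emph{Strict convexity.} On the arc $\vt\in[0,\vt_0]$ we have $w_*\equiv c_K$, so that portion of $\partial\mc{W}$ is (half of) the circle $\partial B_{c_K}$, which is strictly convex. For $\vt\in(\vt_0,\pi/2]$ I would parametrise $\partial\mc{W}$ by $\vt\mapsto w_*(\vt)(\sin\vt,\cos\vt)$ and compute the signed curvature. Using $w_*'(\vt)=\tan(\vt-\vp_*(\vt))w_*(\vt)$, the curvature of a polar curve $r=w_*(\vt)$ is $\kappa=\dfrac{w_*^2+2w_*'^2-w_*w_*''}{(w_*^2+w_*'^2)^{3/2}}$, and after substituting the expression for $w_*'$ this reduces to something manifestly positive provided $\vp_*'(\vt)<1$, i.e. that $\vt-\vp_*(\vt)$ is strictly increasing. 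Alternatively — and this is the cleaner route — I would argue directly from Lemma \ref{lem:tangent}: for each fixed $\vt\in[\vt_0,\pi/2]$ the graph (in polar form) of $\t\vt\mapsto\frac{\cos(\vt-\vp_*(\vt))}{\cos(\t\vt-\vp_*(\vt))}w_*(\vt)$ is exactly the straight line $\{(x,y):(\alpha_*(\vt),\beta_*(\vt))\cdot(x,y)=(\alpha_*(\vt),\beta_*(\vt))\cdot w_*(\vt)\xi\}$ (this is the content of the rewriting of $(U,V)$ in the proof of Lemma \ref{lem:tangent}), and it touches $\mc{W}$ from outside at $w_*(\vt)\xi$ while containing $\mc{W}$ on one side. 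That is, $\mc{W}$ lies in the intersection of all these half-planes, one supporting line through each boundary point, and the supporting lines through distinct boundary points are distinct (their normals $(\alpha_*(\vt),\beta_*(\vt))$ are distinct, which follows from the monotonicity $\vt\mapsto\vp_*(\vt)$ being non-constant together with $w_*$ strictly increasing on $(\vt_0,\pi/2]$); strict convexity follows. Near the corner points $(\pm c_*,0)$ and the junction at $\vt_0$ one also checks that no flat segment appears, using that $w_*'(\pi/2)>0$ and $w_*'(\vt_0^+)=0$ with $w_*>c_K$ for $\vt>\vt_0$.

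\medskip
\noindent\emph{The exterior normal and the strict inclusion.} From the supporting-line description above, the exterior unit normal to $\mc{W}$ at $w_*(\vt)\xi$, for $\vt\in(\vt_0,\pi/2]$, is precisely the direction $(\alpha_*(\vt),\beta_*(\vt))/|(\alpha_*(\vt),\beta_*(\vt))|$; for $\vt=\pi/2$ this is interpreted as the limit, consistent with $w_*'(\pi/2)=w_*(\pi/2)\beta_*(\pi/2)/\alpha_*(\pi/2)$. For the strict inclusion, recall from Section \ref{sec:exp} that for $\vt>\vt_0$ one has $w_*(\vt)>c_K$ and the contact point $(\beta_*(\vt),\alpha_*(\vt))$ is \emph{not} parallel to $\xi=(\sin\vt,\cos\vt)$ — equivalently $\vp_*(\vt)<\vt$, as shown in the proof of Proposition \ref{pro:w*}. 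Since the boundary of $\ul{\mc{W}}$ is, by its Huygens/reachable-set description, made up of circular arcs of radius $c_K$ (normal velocity $c_K$) and the flat road segment, at any boundary point of $\ul{\mc{W}}$ lying in the open field the exterior normal points radially (parallel to the position vector from the appropriate circle centre) — in particular along directions realising only speed $c_K$. But for $\vt>\vt_0$ the point $w_*(\vt)\xi\in\partial\mc{W}$ has $|w_*(\vt)\xi|=w_*(\vt)>c_K$, so it lies strictly outside $\ul{\mc{W}}$; hence $\mc{W}\supsetneq\ul{\mc{W}}$. (That $\mc{W}\supseteq\ul{\mc{W}}$ is already part (ii) of \thm{main}, proved via the subsolution construction and Step 1 of the lower bound, so only strictness is at stake here, and strictness is witnessed by a single such point.)

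\medskip
\noindent I expect the main obstacle to be the first stage: making the strict convexity argument fully rigorous at the two non-smooth features of $\partial\mc{W}$ — the corners $(\pm c_*,0)$ and the $C^1$-but-not-obviously-$C^2$ junction at $\vt_0$ — and checking that the family of supporting lines from Lemma \ref{lem:tangent} genuinely has strictly rotating normal (no two boundary points share a tangent line). This amounts to verifying $\frac{d}{d\vt}\arctan\frac{\alpha_*(\vt)}{\beta_*(\vt)}\neq 1$, or more robustly that the support function of $\mc{W}$ is strictly convex in the dual angle; the curvature computation sketched above is the concrete way to pin this down, and it should come out positive because the disc $\mc{G}(w_*(\vt))$ varies monotonically (shrinking point at $\vt_0$, expanding as $\vt$ increases) while the contact with $\mc{S}(w_*(\vt))$ slides in a controlled direction.
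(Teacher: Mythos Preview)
Your ``cleaner route'' via Lemma \ref{lem:tangent} is exactly the paper's approach, and it is simpler than you anticipate. The paper writes, for $(x,y)=r\t\xi\in\mc{W}$,
\[
(\alpha_*(\vt),\beta_*(\vt))\.(x,y)=r(\alpha_*(\vt),\beta_*(\vt))\.\t\xi
\leq w_*(\t\vt)(\alpha_*(\vt),\beta_*(\vt))\.\t\xi
\leq w_*(\vt)(\alpha_*(\vt),\beta_*(\vt))\.\xi,
\]
the last step being \eqref{eq:line}, and observes that equality forces $(x,y)=w_*(\vt)\xi$. Thus each supporting half-plane meets $\mc{W}$ in a single point, which gives strict convexity and the normal direction at once. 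No curvature computation, no injectivity of $\vt\mapsto\vp_*(\vt)$, and no separate treatment of the junction at $\vt_0$ are needed --- the obstacle you flag in your final paragraph dissolves.

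Your argument for the strict inclusion, however, has a genuine gap. You write that ``$|w_*(\vt)\xi|=w_*(\vt)>c_K$, so it lies strictly outside $\ul{\mc{W}}$'', but $\ul{\mc{W}}$ is \emph{not} contained in $\ol B_{c_K}$: its boundary in the field consists of an arc of $\partial B_{c_K}$ for $|\vt|\leq\vt_1=\arcsin(c_K/c_*)$ together with the two straight tangent segments joining that arc to $(\pm c_*,0)$. Along those segments the radial distance exceeds $c_K$, so the implication fails. (Your description of $\partial\ul{\mc{W}}$ as ``circular arcs of radius $c_K$ and the flat road segment'' omits precisely these straight pieces.) The paper instead compares the angles that $\partial\mc{W}$ and $\partial\ul{\mc{W}}$ make with the $x$-axis at the common corner $(c_*,0)$: the former is $\vp_*(\pi/2)=\arctan(\alpha_*(\pi/2)/\beta_*(\pi/2))$, the latter is $\vt_1$, and a look at the tangency picture for $\xi=(1,0)$ (Figure \ref{fig:phi_star}) shows $\vp_*(\pi/2)>\vt_1$. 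A quicker alternative, once you have strict convexity in hand: $\ul{\mc{W}}$ has flat boundary segments whereas $\mc{W}$ does not, so they cannot coincide.
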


\begin{proof}
Fix $\vt\in[\vt_0,\pi/2]$. For
$(x,y)\in\mc{W}\cap\{x\geq0\}$, we write $(x,y)=r(\sin\t\vt,\cos\t\vt)$
for some $\t\vt\in[0,\pi/2]$ and $0\leq r\leq w_*(\t\vt)$.
Using the inequality given by Lemma \ref{lem:tangent} in the form
\eqref{eq:line}, with $\xi=(\sin\vt,\cos\vt)$ and
$\t\xi:=(\sin\t\vt,\cos\t\vt)$,
yields
\[\begin{split}
(\alpha_*(\vt),\beta_*(\vt))\.(x,y)&=r(\alpha_*(\vt),\beta_*(\vt))\.\t\xi\\
&\leq w_*(\t\vt)(\alpha_*(\vt),\beta_*(\vt))\.\t\xi\leq
w_*(\vt)(\alpha_*(\vt),\beta_*(\vt))\.\xi,
\end{split}\]
and equality holds if and only if $(x,y)=w_*(\vt)\xi$. 
This shows that $\W\cap\{x\geq0\}$ is contained in the half-plane
$\{(\alpha_*(\vt),\beta_*(\vt))\.(x,y)<
w_*(\vt)(\alpha_*(\vt),\beta_*(\vt))\.(\sin\vt,\cos\vt)\}$, except for the point
$w_*(\vt)(\sin\vt,\cos\vt)$ which belongs to its boundary. Then, clearly, the 
same property holds for the whole $\mc{W}$. This shows the convexity of $\W$ and 
the directions of the normal vectors.
%\end{proof}
%
%\begin{proposition}\label{pro:W>}
%It holds that $\mc{W}\supsetneq\ul{\W}$.
%\end{proposition}
%
%\begin{proof}

Let us prove the last statement of the proposition.
Proposition \ref{pro:w*} implies that $\W$ contains $\ol
B_{c_K}\cap\ol\O$, whence, being convex, it contains $\ul{\W}$. We prove
that $\W\not\equiv\ul{\W}$ by showing that the (acute) angle $\vp_*$ formed by
$\W$ with the $x$-axis is strictly larger than the one formed by $\ul{\W}$,
which is $\vt_1:=\arcsin(c_K/c_*)$. 
We know from the first part of the proposition that 
$\vp_*=\arctan(\alpha_*/\beta_*)$, where, for short,
$\alpha_*:=\alpha_*(\pi/2)$ and $\beta_*:=\beta_*(\pi/2)$. 
Recall that $(\beta_*,\alpha_*)$ is the tangent point between the sets 
$\mc{S}(c_*)$ and $\mc{G}(c_*)$
associated with $\xi=(1,0)$, defined in Section \ref{sec:exp}.
It then follows from geometrical considerations that $\vp_*>\vt_1$, see
Figure \ref{fig:phi_star}.
\end{proof}
\begin{figure}[ht]
\begin{center}
\includegraphics[height=7.5cm]{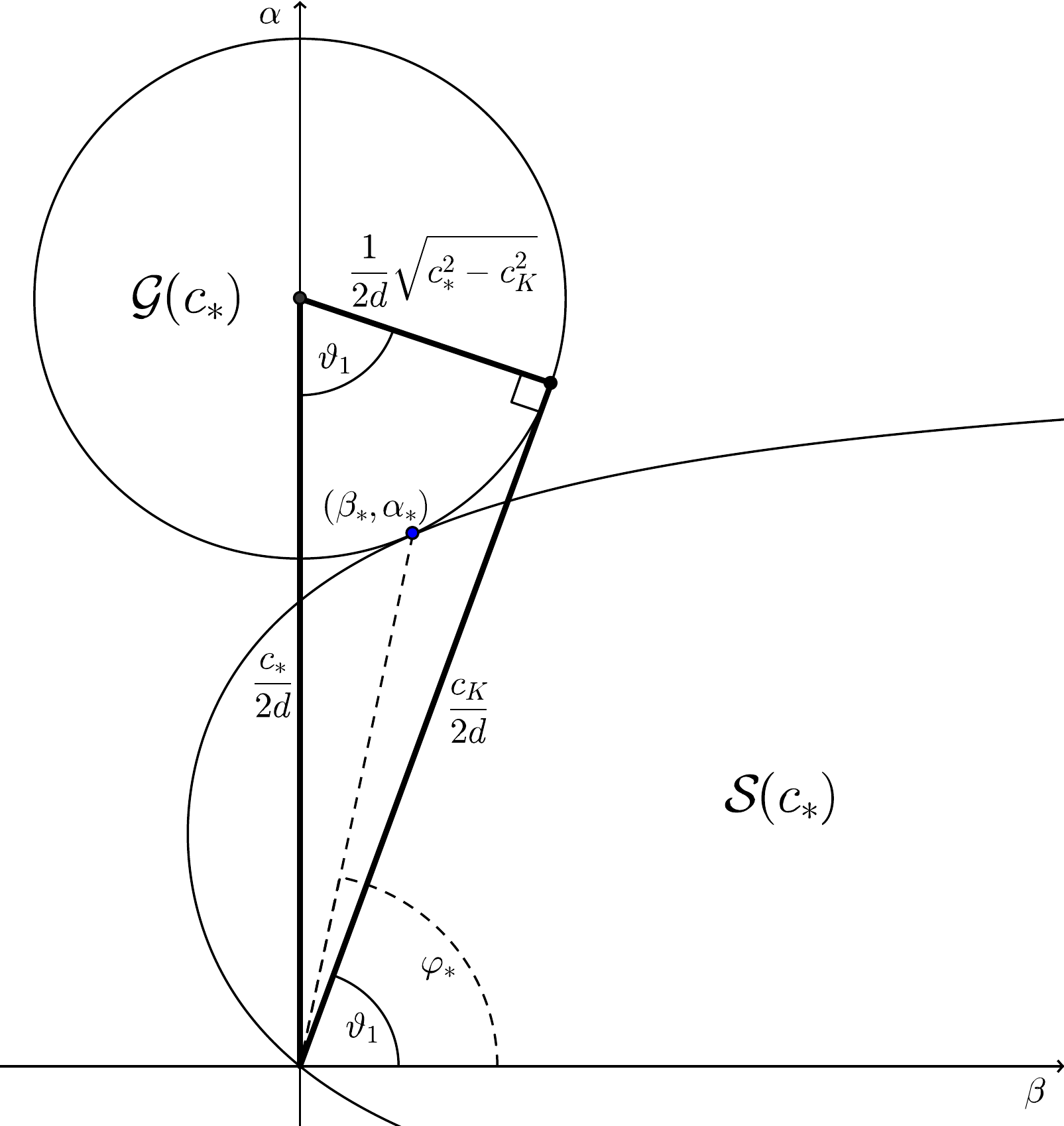}
\caption{The angles $\vp_*$ and $\vt_1$.}
\label{fig:phi_star}
\end{center}
\end{figure}

We deduce from Proposition \ref{pro:W} and Figure \ref{fig:w*>cK} (b) that, for 
$\vt\in(\vt_0,\pi/2]$, the exterior normal at the point 
$w_*(\vt)(\sin\vt,\cos\vt)$ is steeper than $(\sin\vt,\cos\vt)$.

%%%%%%%%%%%%%%%%%%%%%%%%%%%%%%%%%%%%%%%%%%%%%%%%%%%%%%%%%%%%%%%%%%

% 
% 
% \section{Spreading in vertical direction}
% 
% \subsection{Lower bound}
% $$c_*(e_2)\geq c_{KPP}.$$
% 
% For $c<c_{KPP}$ there exists $R>0$ and $\underline v$ subsolution of
% $-c\partial_y v-d\Delta v=f(v)$ in $B_R$ vanishing on $\partial B_R$.
% Moreover,
% up to waiting until $t=1$, $\underline v(x,y+2R)$ is below $v$.
% Let $(\phi,\psi)$ be the sol of \eqref{Cauchy} with initial datum
% $(0,\underline
% v)$. Then $(u,v)>(\phi,\psi)$ and
% $(\tilde\phi,\tilde\psi):=(\phi,\psi)(x,y+ct,t)$
% is increasing in time and then $\tilde\psi$ converges to a nonnegative
% solution
% $V$ of $c\partial_y v-d\Delta v=f(v)$ in $\R^2$. Sliding method $\Rightarrow
% \inf V>\max\underline v$. By usual arguments, $f(\inf V)=0$, that is,
% $V\equiv1$.
% 
% 
% \subsection{Upper bound}
% $$c_*(e_2)\leq c_{KPP}.$$
% 
% Take $c>c_{KPP}$ and call
% $$\lambda:=\frac{c-\sqrt{c^2-c_{KPP}^2}}{2d}.$$
% The couple $(\overline u,\overline v)$ defined by
% $$\overline u(x,t):=\frac1\mu e^{\lambda ct},\qquad
% \overline v(x,y,t):=e^{-\lambda(y-ct)},$$
% is a supersolution to \eqref{Cauchy}. Moreover, up to waiting until some time
% $t$,
% it is above $(u_0,v_0)$. The comparison principle implies that 
% $$\forall \t c>c,\quad (u,v)(x,y+\t ct,t)\to(0,0),$$
% uniformly in $x$ and locally uniformly in $y$.

%%%%%%%%%%%%%%%%%%%%%%%%%%%%%%%%%%%%%%%%%%%%%%%%%%%%%%%%%%%%%%%%%%
Let us finally estimate by how much $\W$ is larger than $\underline\W$.
\begin{proposition}\label{pro:normalspeed}
The family of sets $(t\W)_{t>0}$ evolves with normal speed $c_K$ in the sector 
$\{(\sin\vt,\cos\vt)\ : |\vt|\leq\vt_0\}$ and with normal speed strictly larger than $c_K$ in the sectors $\{(\sin\vt,\cos\vt)\ : \vt_0<|\vt|\leq\pi/2\}$.
\end{proposition}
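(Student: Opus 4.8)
The plan is to compute the normal speed of the family $(t\W)_{t>0}$ at each of its boundary points directly from the parametrization $\vt\mapsto w_*(\vt)(\sin\vt,\cos\vt)$ of $\partial\W$ and the knowledge, from Proposition \ref{pro:W} and Proposition \ref{pro:w*}, of the exterior unit normal there. Recall that if a convex set $A$ has a $C^1$ boundary point $p$ with exterior unit normal $n$, then the normal speed of the homothetic family $(tA)_{t>0}$ at the point $tp$ is simply $p\cdot n$ (this is the support-function value in the direction $n$, and it is exactly the speed at which the supporting hyperplane with normal $n$ moves outward). So the entire statement reduces to evaluating $w_*(\vt)(\sin\vt,\cos\vt)\cdot n(\vt)$, where $n(\vt)$ is the exterior unit normal to $\W$ at $w_*(\vt)(\sin\vt,\cos\vt)$.

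First I would treat the easy sector $|\vt|\le\vt_0$. There $w_*(\vt)=c_K$ by Proposition \ref{pro:w*}, so the boundary arc of $\W$ in this sector is exactly the circular arc $\ol B_{c_K}\cap\{|\vt|\le\vt_0\}$; its exterior normal at $c_K(\sin\vt,\cos\vt)$ is $(\sin\vt,\cos\vt)$ itself, and hence the normal speed is $c_K(\sin\vt,\cos\vt)\cdot(\sin\vt,\cos\vt)=c_K$. This is just the statement that a ball of radius $c_K$ centered at the origin, dilated linearly in $t$, moves with normal speed $c_K$ — the Huygens picture. (One should note that at $\vt=\vt_0$ the two descriptions agree by the $C^1$-matching of $w_*$ established in Proposition \ref{pro:w*}.)

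Next I would treat the sector $\vt_0<|\vt|\le\pi/2$. By evenness it suffices to take $\vt\in(\vt_0,\pi/2]$. By Proposition \ref{pro:W}, the exterior unit normal at $p(\vt):=w_*(\vt)(\sin\vt,\cos\vt)$ is $n(\vt)=(\alpha_*(\vt),\beta_*(\vt))/|(\alpha_*(\vt),\beta_*(\vt))|$, where $(\beta_*(\vt),\alpha_*(\vt))$ is the unique contact point of $\mc{S}(w_*(\vt))$ and $\mc{G}(w_*(\vt))$, and in particular $(\alpha_*(\vt),\beta_*(\vt))\in\Gamma(w_*(\vt))$, i.e.\ it lies on the circle of centre $C(w_*(\vt))=\frac{w_*(\vt)}{2d}(\cos\vt,\sin\vt)$ and radius $r(w_*(\vt))=\frac{\sqrt{w_*(\vt)^2-c_K^2}}{2d}$ (here I use the coordinates as in Section \ref{sec:exp}, remembering the swap between the $(\beta,\alpha)$-plane and the physical $(x,y)$-directions). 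Writing $\vp_*(\vt)=\arctan(\alpha_*(\vt)/\beta_*(\vt))$ as in Lemma \ref{lem:tangent}, the vector $(\alpha_*,\beta_*)$ points in the physical direction $(\sin\vp_*,\cos\vp_*)$, so the normal speed at $p(\vt)$ equals $w_*(\vt)(\sin\vt,\cos\vt)\cdot(\sin\vp_*,\cos\vp_*)=w_*(\vt)\cos(\vt-\vp_*(\vt))$. Now I would compute $w_*(\vt)\cos(\vt-\vp_*(\vt))$ geometrically: it is the length of the projection of $p(\vt)=w_*(\vt)(\sin\vt,\cos\vt)$ onto the ray through $(\sin\vp_*,\cos\vp_*)$. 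Since the second equation of \eqref{algebraic} says that the scaled point $\frac1{2d}(\text{physical }p(\vt))$ is the centre $C(w_*(\vt))$ of $\Gamma(w_*(\vt))$ (indeed $C(c)=\frac{c}{2d}(\text{direction of }\xi)$ in physical coordinates), and since $(\alpha_*,\beta_*)$ is on $\Gamma(w_*(\vt))$, the foot of that projection is obtained by moving from $C(w_*(\vt))$ perpendicularly to $(\sin\vp_*,\cos\vp_*)$ — so $\frac1{2d}w_*(\vt)\cos(\vt-\vp_*(\vt))$ is exactly the projection of the centre $C$, i.e.\ $\sqrt{|C|^2-(\text{component of }C\text{ perpendicular to }n)^2}$. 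Because the line orthogonal to $n$ through $C$ is a chord of $\Gamma(w_*(\vt))$ of half-length $\le r(w_*(\vt))$ (it is a chord since $C$ is the centre, so its half-length is exactly $r$ when the chord is a diameter, but the relevant segment from $C$ perpendicular to $n$ and reaching the physical $y$-axis... ) — here I would instead argue more cleanly: $w_*(\vt)\cos(\vt-\vp_*(\vt)) = 2d\,(\text{dist from origin to the line through }(\alpha_*,\beta_*)\text{ orthogonal to }n)$, and since $(\alpha_*,\beta_*)$ lies on the circle $\Gamma(w_*(\vt))$ strictly inside the half-plane $\{(\beta,\alpha)\cdot(\xi_2,\xi_1)>0\}$ whose distance from the origin is $2d\cdot c_K^2/(2dc)\cdots$; concretely the supporting line of $\Gamma(w_*(\vt))$ at $(\alpha_*,\beta_*)$ is at distance $\frac{|C(w_*(\vt))|-r(w_*(\vt))}{1}$ from the origin along $n$ only when $n$ is radial, i.e.\ when $(\alpha_*,\beta_*)=P(w_*(\vt))$, which happens exactly at $\vt=\vt_0$ (the collapsed-disc case), and strictly otherwise $\vt>\vt_0$ forces the contact point to be off the origin-to-centre ray, which makes the projection strictly larger than $|C|-r = \frac{w_*-\sqrt{w_*^2-c_K^2}}{2d}$...

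To keep this rigorous rather than picture-driven, the clean computation I would actually carry out is the following. The normal speed is $N(\vt):=w_*(\vt)\cos(\vt-\vp_*(\vt))$. Using $w_*'(\vt)=\tan(\vt-\vp_*(\vt))\,w_*(\vt)$ from the proof of Proposition \ref{pro:w*}, one gets that $N(\vt)$ is precisely the signed length of the tangent-line intercept, and a short manipulation with the identity $(\alpha_*,\beta_*)\in\Gamma(w_*)$ — namely $|(\alpha_*,\beta_*)-C(w_*)|=r(w_*)$ — gives $N(\vt)=\sqrt{c_K^2 + \big(2d\,(\alpha_*(\vt),\beta_*(\vt))\cdot\xi^\perp\big)^2\,/\,\text{something}}$; more to the point, $N(\vt) \ge c_K$ always, with equality iff $(\alpha_*(\vt),\beta_*(\vt))$ is the point of $\Gamma(w_*(\vt))$ closest to the origin, i.e.\ iff $r'(w_*)$-type condition holds, which by the analysis of Section \ref{sec:exp} is exactly the boundary case $\vt=\vt_0$. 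For $\vt\in(\vt_0,\pi/2]$ the contact point $(\alpha_*,\beta_*)$ is a genuine tangency of two distinct curves $\Sigma^+(w_*)$ and $\Gamma(w_*)$ and is strictly separated from $P(w_*(\vt))$ (the disc $\mc G(w_*(\vt))$ is non-degenerate and $\mc S(w_*(\vt))$ does not contain its centre), so the strict inequality $N(\vt)>c_K$ follows. The main obstacle I anticipate is precisely making this last step fully rigorous: one must turn the intuitively clear "the contact point is not the innermost point of the circle, hence the tangent line is farther from the origin than $|C|-r$" into a clean inequality, and handle the endpoint $\vt=\pi/2$ where $\partial\W$ is only $C^1$ up to that point and the normal is defined as a limit. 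I would dispatch that either by the explicit formula \eqref{theta0} for $\vt_0$ combined with monotonicity of $\Phi$, or by differentiating $N(\vt)$ and showing $N(\vt_0)=c_K$, $N'(\vt)>0$ on $(\vt_0,\pi/2)$; the latter uses $N'(\vt) = w_*'(\vt)\cos(\vt-\vp_*) - w_*(\vt)\sin(\vt-\vp_*)(1-\vp_*'(\vt))$, and substituting $w_*' = w_*\tan(\vt-\vp_*)$ collapses this to $N'(\vt)=w_*(\vt)\sin(\vt-\vp_*(\vt))\,\vp_*'(\vt)$, so it remains only to check the sign of $\vp_*'$ on $(\vt_0,\pi/2)$, a one-variable monotonicity statement about the tangency angle that follows from the shrinking of $\mc S(c)$ and the expanding of $\mc G(c)$ established in Section \ref{sec:exp}.
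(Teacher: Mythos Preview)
Your setup is correct: the normal speed at $w_*(\vt)(\sin\vt,\cos\vt)$ is $N(\vt)=w_*(\vt)\cos(\vt-\vp_*(\vt))$, and the sector $|\vt|\le\vt_0$ is immediate. For $\vt>\vt_0$, however, the argument does not close. The geometric middle section is muddled and reaches no conclusion (the ``supporting line at distance $|C|-r$'' heuristic is wrong in general: that distance is the minimum over \emph{all} tangent lines, not the one at $P_*$). Your fallback, differentiating to get $N'(\vt)=w_*(\vt)\sin(\vt-\vp_*(\vt))\,\vp_*'(\vt)$, is computed correctly, but the decisive fact $\vp_*'(\vt)>0$ is left with the justification ``follows from the shrinking of $\mc S(c)$ and the expanding of $\mc G(c)$''. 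That does not work: those are monotonicities in $c$, whereas here $\vt$ varies and both $c=w_*(\vt)$ \emph{and} the direction $\xi$ change simultaneously; $\mc S$ and $\mc G$ depend on $\xi$ as well as on $c$. (The inequality $\vp_*'>0$ is in fact true, but for a different reason: it is equivalent to injectivity of the Gauss map of $\partial\W$, i.e.\ to the strict convexity already established in Proposition~\ref{pro:W}. If you invoke that, the argument can be completed.)

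The paper's proof avoids all of this with a one-line algebraic identity you overlooked. Since $(\alpha_*,\beta_*)$ lies on $\Gamma(w_*(\vt))$, the second equation of \eqref{algebraic} reads $w_*(\vt)\,\xi\.(\alpha_*,\beta_*)=f'(0)+d|(\alpha_*,\beta_*)|^2$; dividing by $|(\alpha_*,\beta_*)|$ gives directly
\[
N(\vt)=\frac{f'(0)}{|(\alpha_*,\beta_*)|}+d\,|(\alpha_*,\beta_*)|.
\]
The right-hand side, viewed as a function of $\lambda=|(\alpha_*,\beta_*)|$, attains its minimum $c_K$ only at $\lambda=\sqrt{f'(0)/d}=c_K/(2d)$. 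It thus suffices to show $|(\alpha_*,\beta_*)|\neq c_K/(2d)$, and the paper does this by an elementary triangle argument: the origin lies in the convex set $\mc S(w_*)$, the centre $C$ lies in $\mc G(w_*)$, and the common tangent at $P_*=(\beta_*,\alpha_*)$ separates them, so the angle $\widehat{OP_*C}$ is obtuse; the law of cosines in the triangle $OP_*C$ then gives $|OP_*|^2<|OC|^2-|P_*C|^2=c_K^2/(4d^2)$. This is shorter and requires neither differentiation nor any monotonicity of $\vp_*$.
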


\begin{proof}
The assertion for the sector $\{(\sin\vt,\cos\vt)\ : |\vt|\leq\vt_0\}$ trivially
holds because $\W$ coincides with $B_{c_K}$ there.
Consider $\vt\in(\vt_0,\pi/2]$ and set $\xi:=(\sin\vt,\cos\vt)$. 
By Proposition \ref{pro:W}, the exterior unit normal to $\W$ at the point 
$w_*(\vt)\xi$ is 
$$\mathbf{n}(\vt):=\frac{(\alpha_*(\vt),\beta_*(\vt))}
{|(\alpha_*(\vt),\beta_*(\vt))|}.$$ Hence, the
speed of expansion of the set $t\W$ at the point $tw_*(\vt)\xi$ in the normal direction $\mathbf{n}(\vt)$ is 
$c_{\mathbf{n}}(\vt):=w_*(\vt)\xi\.\mathbf{n}(\vt).$
This is precisely the normal speed of the level lines of the function $V$ defined by \eqref{exp} with 
$c=w_*(\vt)$, $\alpha=\alpha_*(\vt)$, $\beta=\beta_*(\vt)$ and
$\gamma=\mu/(\nu+d\beta_*(\vt))$. Indeed, we can rewrite
$$V(x,y,t)=\gamma e^{-|(\alpha_*(\vt),\beta_*(\vt))|
[(x,y)\.\mathbf{n}(\vt)-c_{\mathbf{n}}(\vt)t]}.$$
Plugging the above expression in the second equation of 
\eqref{linear} satisfied by $V$, we get
$$c_{\mathbf{n}}(\vt)=\frac{f'(0)}{|(\alpha_*(\vt),\beta_*(\vt))|}
+d|(\alpha_*(\vt), \beta_*(\vt))|.$$
The function $\R_+\ni\lambda\mapsto f'(0)/\lambda+d\lambda$ attains its minimum 
$c_K$ at the unique value $\lambda=\sqrt{f'(0)/d}$. Thus, to prove the 
proposition we need to show that $|(\alpha_*(\vt),\beta_*(\vt))|\neq 
\sqrt{f'(0)/d}$. This follows from the geometrical interpretation of the point 
$P_*\equiv(\beta_*(\vt),\alpha_*(\vt))$, see Figure \ref{fig:w*>cK} (b): the convexity of 
$\mc{S}(c)$ implies that
the angle between the segments $\ol{P_*\,C(c)}$ and $\ol{P_*\,O}$, $O$ denoting 
the origin, is 
larger than $\pi/2$, whence, since these segments have length $\sqrt{c^2-c_K^2}/2d$ 
and $c/2d$ respectively,
elementary considerations about the triangle $O\,P_*\,C(c)$ show that 
$|(\alpha_*(\vt),\beta_*(\vt))|<c_K^2/2d=\sqrt{f'(0)/d}$.
\end{proof}
\section{The case with transport and mortality on the road}
\label{sec:T}

We now describe how to modify the arguments used for problem \eqref{Cauchy} in
order to treat the case of \eqref{CauchyT}. This is
done section by section, keeping the same notation.

{\em Section \ref{sec:exp}.}\\
We need to consider the values $\xi_1\leq0$ too.
The transport and mortality terms affect \eqref{algebraic} through the
additional term $-q\alpha+\rho$ in the left-hand side of the first equation.
This results in the new functions
$$\alpha_D^\pm(c,\beta)=\frac1{2D}\left(c\xi_1-q\pm\sqrt{(c\xi_1-q)^2+
4D(c\xi_2\beta+\chi(d\beta)+\rho)}\right).$$
One can readily check that $\alpha_D^+(c,\beta)$ is still increasing in
$c$ and concave in $\beta$. It further satisfies
the following property, that will be crucial in the sequel:
$\alpha_D^+(c,0)\geq0$.
We can therefore define $w_*$ as before. We have that $w_*=c_K$ if and only if
$C(c_K)\in\mc{S}(c_K)$, which now reads
$$\frac{c_K^2}{2d}-\frac{Dc_K^2}{4d^2}\xi_1^2-\frac{q c_K}{2d}\xi_1+\rho\geq
-\frac{\mu c_K\xi_2}{2\nu+c_K\xi_2}.$$
This inequality can be rewritten in terms of $\xi_1$ as $\Phi(\xi_1)\geq0$,
with 
$$\Phi(s):=2-\frac Dds^2-\frac{2q}{c_K}s+\frac{4d\rho}{c_K^2}
+\frac{4d\mu\sqrt{1-s^2}}{2\nu c_K+c_K^2\sqrt{1-s^2}}.$$
Explicit computation shows that all the above terms are concave in $s$.
Hence, since $\Phi(0)>0$ and $\Phi(\pm\infty)=-\infty$, there are two values
$s_-<0<s_+$ such that $w_*=c_K$ if and only if $\xi_1\in[s_-,s_+]$.
We have that $|s_\pm|<1$ if and only if $\Phi(\pm1)<0$, which is precisely 
condition
\eqref{enhancement}.
Therefore, writing $w_*$ as a function of the angle $\vt$, we derive the
condition for the enhancement of the speed stated in \thm{mainT}, with
$\vt_\pm=\arcsin s_\pm$ if \eqref{enhancement} holds, $\vt_\pm=\pm\pi/2$
otherwise. For $\vt=\pm\pi/2$, we recover the asymptotic
speeds of spreading $c_*^\pm$ in the directions $\pm(1,0)$ given by
Theorem 1.1 of \cite{BRR3}.

{\em Section \ref{sec:subsolutions}.}\\
The only point one has to check is the argument to derive \eqref{slopes}
in the proof of Lemma \ref{lem:C-subsolution}. That argument is
based on the fact that the slope of the tangent line to $\mc{G}(w_*)$ at the
point $(\beta_*,\alpha_*)$ is less than $\alpha_*/\beta_*$, which, in turn, 
is less than $\xi_1/\xi_2$. This properties follow exactly as before, from the
fact that $\alpha_D^+$ is concave in $\beta$ and it is nonnegative at $\beta=0$.

{\em Section \ref{sec:spreading}.}\\
The proof of the upper bound \eqref{W+} works exactly as for \thm{main}.
In the lower bound \eqref{W-}, the value $1$ is now replaced by the function
$V_S(y)$. However, since $V_s(+\infty)=1$, we can proceed exactly as in Section
\ref{sec:lower}, by use of the compactly supported subsolutions and the
convergence result close to the road. The latter is now provided by Theorem
1.1 of \cite{BRR3}.

The arguments in Section \ref{sec:w*} are unaffected by the presence
of the additional terms.

%%%%%%%%%%%%%%%%%%%%%%%%%%%%%%%%%%%%%%%%%%%%%%%%%%%%%%%%%%%%%%%%%%

\begin{appendix}

\section*{Appendix: the generalised comparison principle}
\label{sec:gencom}

\begin{proof}[Proof of Proposition \ref{pro:gencom}]
Following the arguments of the proof of Proposition 3.2 in \cite{BRR2}, 
we start with reducing $(\ol u,\ol v)$ to a strict
supersolution $(\hat u,\hat v)$ which is strictly above $(\ul u,\ul v)$ at time
$0$ and satisfies
\Fi{toinfty}
\lim_{|x|\to\infty}\hat u(x,t)=+\infty,\quad 
\lim_{|(x,y)|\to\infty}\hat v(x,y,t)=+\infty,\quad\text{uniformly
w.r.t.~}t\geq0.
\Ff
% 
% it is
% sufficient to prove the result for the problem
% \begin{equation}\label{Emonotone}
% \begin{cases}
% \partial_t u-D \partial_{xx} u+(\mu+l)u=\nu\rest{v}{y=0}, & x\in\R,\
% t>0 \\
% \partial_t v-d\Delta v=h(t,v), & (x,y)\in\O,\ t>0\\
% -d\partial_y\rest{v}{y=0}+\nu\rest{v}{y=0}=\mu u, & x\in\R,\ t>0,\\
% \end{cases}
% \end{equation}
% where $l$ is the Lipschitz constant of $f$ and $h(t,v):=e^{-lt}f(ve^{lt})-lv$,
% under the additional assumptions that $(\ol u,\ol v)$ is strictly above $(\ul
% u,\ul v)$ at time $0$ and satisfies
% \Fi{toinfty}
% \lim_{|x|\to\infty}\ol u(x,t)=+\infty,\quad 
% \lim_{|(x,y)|\to\infty}\ol v(x,y,t)=+\infty,\quad\text{uniformly
% w.r.t.~}t\geq0.
% \Ff
% 
% \bigskip
To do this, we first multiply $(\ul u,\ul v)$ and $(\ol u,\ol v)$ by
$e^{-lt}$, where $l$ is the Lipschitz constant of $f$, and we end up 
with generalised sub and supersolutions \footnote{formally, but it is
straightforward to verify it in the generalised
sense of Definition \ref{def:gensub}} (still denoted $(\ul u,\ul v)$ and
$(\ol u,\ol v)$)
of the new system
\Fi{Emonotone}
\begin{cases}
\partial_t u-D \partial_{xx} u+(\mu+l)u=\nu\rest{v}{y=0}, & x\in\R,\
t>0 \\
\partial_t v-d\Delta v=h(t,v), & (x,y)\in\O,\ t>0\\
-d\partial_y\rest{v}{y=0}+\nu\rest{v}{y=0}=\mu u, & x\in\R,\ t>0,\\
\end{cases}
\Ff
with $h(t,v):=e^{-lt}f(ve^{lt})-lv$.
In such a way we gain the nonincreasing monotonicity in $v$ of the nonlinear
term $h$. Next, we introduce a nonnegative smooth function $\chi:\R\to\R$ 
satisfying
$$\chi=0\ \text{ in }[0,1],\qquad \lim_{r\to+\infty}\chi(r)=+\infty,
\qquad|\chi''|\leq\delta,$$
where $\delta>0$ will be chosen later.
% $$((N-1)(d+D)+|q|+2|r|)|\chi'|+(2d+D)|\chi''|\leq1
% \ \text{ in }\R.$$
% $$\leq[2\max(|p|,2|q|)]^{-1},\quad
% |\chi''|\leq[2\max(ND,(N+1)d)]^{-1}\quad\text{in }\R.$$
Then, for $\e>0$, we set
$$\hat u(x,t):=\ol u(x,t)+\e(\chi(|x|)+t+1),\qquad
\hat v(x,y,t):=\ol v(x,y,t)+\frac\mu\nu\e(\chi(|x|)+\chi(y)+t+1),$$
% Using the fact that 
% $$|\nabla_x(x\mapsto\chi(|x|))|\leq|\chi'(|x|)|,\qquad
% |\Delta_x(x\mapsto\chi(|x|))|\leq |\chi''(|x|)|+
% (N-1)|\chi'(|x|)|,$$
We claim that $\delta$ can be chosen small enough, independently of $\e$, in
such a way that $(\hat u,\hat v)$ is still a generalised supersolution of
\eqref{Emonotone}, in the strict sense for the first two equations.
Take $\ol x\in\R$ and $\ol t>0$. By the definition of generalised supersolution,
there exists a function $u$ satisfying
$u\geq\ol u$ in a neighbourhood of $(\ol x,\ol t)$ and, at $(\ol x,\ol t)$,
$$u=\ol u,\qquad\partial_t u-D \partial_{xx} u+(\mu+l)u\geq\nu 
\rest{\ol v}{y=0}.$$
The function $\t u(x,t):=u(x,t)+\e(\chi(|x|)+t+1)$ satisfies
$\t u\geq\hat u$ in a neighbourhood of $(\ol x,\ol t)$ and, at $(\ol x,\ol t)$,
$$\t u=\hat u,\qquad\partial_t\t u-D \partial_{xx}\t u+(\mu+l)\t u\geq\nu 
\rest{\hat v}{y=0}+\e(1-D\chi''(|x|)).$$
Then the desired strict inequality holds provided $\delta<1/D$.
For the second equation, we start from a ``test function'' $v$ 
at some $(\ol x,\ol y)\in\O$, $\ol t>0$ and we see that
$\t v(x,y,t):=v(x,y,t)+\frac\mu\nu\e(\chi(|x|)+\chi(y)+t+1)$
satisfies, at $(\ol x,\ol y,\ol t)$,
$$\partial_t \t v-d\Delta\t v\geq h(\ol t,v)+\frac\mu\nu\e(1-2d\delta).$$
If $\delta<1/2d$, the right hand side is strictly larger than $h(\ol t,v)$,
which,
in turn, is larger than $h(t,\t v)$ by the monotonicity of $h$.
The case of the third equation is straightforward.
The claim is thereby proved.

The pair $(\hat u,\hat v)$ is strictly above $(\ul u,\ul v)$ at $t=0$.
Assume by contradiction that $(\hat u,\hat v)$ is not strictly above $(\ul
u,\ul v)$ for all time and call
$$T:=\sup\{t\geq0\ :\ \ul u<\hat u\text{ in }\R\times[0,t],\
\ul v<\hat v\text{ in }\ol\O\times[0,t]\}\,\in[0,+\infty).$$
It follows that $\ul u\leq\hat u$ in
$\R\times[0,T]$, $\ul v\leq\hat v$ in $\ol\O\times[0,T]$. Moreover, 
by \eqref{toinfty} and the continuity of the functions we see
that $T>0$ and either $\hat u-\ul u$ or $\hat v-\ul v$ vanish somewhere at time
$T$. Suppose that $(\hat u-\ul u)(x,T)=0$ for some $x\in\R$. We now use the
fact that $(\ul u,\ul v)$ and $(\hat u,\hat v)$ are a subsolution
and a strict supersolution respectively of \eqref{Emonotone}, in the generalised
sense.
There exist $u_1$, $u_2$
such that $u_1\leq\ul u\leq\hat u\leq u_2$ in some cylinder
$\mc{C}:=B_\delta(x)\times(T-\delta,T]$ and,
at $(x,T)$, $u_1=\ul u=\hat u=u_2$ and
$$\partial_t u_1-D \partial_{xx} u_1+(\mu+l)u_1\leq\nu\rest{\ul v}{y=0}\leq
\nu\rest{\hat v}{y=0}<\partial_t u_2-D \partial_{xx} u_2+(\mu+l)u_2.$$
Since $(x,T)$ is a maximum point for $u_1-u_2$ in $\mc{C}$,
we have that, there, $\partial_t
u_1=\partial_t u_2$ and $\partial_{xx} u_1\leq\partial_{xx} u_2$.
We then get a contradiction with the above strict inequality.
Thus, $\min_{\R}(\hat u-\ul u)(\.,T)>0$ and there exists $(x,y)\in\ol\O$ such
that $(\hat v-\ul v)(x,y,T)=0$.
Using the other two equations of \eqref{Emonotone}, we find $v_1$, $v_2$ such
that $v_1\leq\ul v\leq\hat v\leq v_2$ in a
cylinder $\mc{C}:=B_\delta(x,y)\times(T-\delta,T]$ and, at $(x,y,T)$, $v_1=\ul
v=\hat v=v_2$ and
$$\partial_t v_1-d \Delta v_1\leq h(T,v_1)=h(T,v_2)<
\partial_t v_2-d \Delta v_2\quad \text{if }y>0,$$
$$-d\partial_y v_1+\nu v_1\leq\mu\ul u <
\mu\hat u\leq-d\partial_y v_2+\nu v_2\quad \text{if }y=0.$$
As before, we get a contradiction with the fact that $v_1-v_2$ has a maximum
in $\mc{C}$ at $(x,y,T)$.
\end{proof}

\end{appendix}

\section*{Acknowledgement}
The research leading to these results has received funding from the European
Research Council under the European Union's Seventh Framework Programme
(FP/2007-2013) / ERC Grant Agreement n.321186 - ReaDi -Reaction-Diffusion
Equations, Propagation and Modelling. Part of this work was done while Henri
Berestycki was visiting the University of Chicago. He was also supported by an
NSF FRG grant DMS - 1065979. Luca Rossi was partially supported by the
Fondazione CaRiPaRo Project ``Nonlinear Partial Differential Equations:
models, analysis, and control-theoretic problems''.

\ifx \cedla \undefined \let \cedla = \c \fi\ifx \cftil \undefined \def
  \cftil#1{\~#1} \fi\ifx \cprime \undefined \def \cprime {$\mathsurround=0pt
  '$}\fi\ifx \Dbar \undefined \def \Dbar
  {\leavevmode\raise0.2ex\hbox{--}\kern-0.5emD} \fi\ifx \k \undefined \let \k =
  \c \fi\ifx \lfhook \undefined \let \lfhook = \relax \fi\ifx \mathbb
  \undefined \def \mathbb #1{{\bf #1}}\fi\ifx \polhk \undefined
  \def\polhk#1{\setbox0=\hbox{#1}{\ooalign{\hidewidth
  \lower1.5ex\hbox{`}\hidewidth\crcr\unhbox0}}} \ifx \soft \undefined \def
  \soft{'} \fi

%%%%%%%%%%%%%%%%%%%%%%%%%%%%%%%%%%%%%%%%%%%%%%%%%%%%%%%%%%%%%%%%%%


\begin{thebibliography}{100}
\footnotesize{
\bibitem{AW0} Aronson, D.G;  Weinberger, H.F. Nonlinear diffusion in population genetics, combustion, and nerve pulse propagation. \emph{In Partial differential equations and related topics (Program, Tulane Univ., New Orleans, La., 1974)}, \textbf{446} 5--49. Springer, Berlin, 1975.
\bibitem{AW}
Aronson, D.~G.; Weinberger, H.~F. Multidimensional nonlinear diffusion arising
  in population genetics. \emph{Adv. in Math.} \textbf{30} (1978), no.~1,
  33--76.

\bibitem{BHbook} Berestycki, H;  Hamel, F. Reaction-diffusion equations and propagation phenomena. Applied Mathematical Sciences, 2014.

\bibitem{BHNadin} Berestycki, H; Hamel, F.; Nadin, G. Asymptotic spreading in heterogeneous diffusive excitable media. J. Funct. Analysis, \textbf{255} (2008), 2146--2189.
 
\bibitem{BHN1} Berestycki, H.;  Hamel, F.; Nadirashvili, N. The speed of propagation for {KPP} type problems. {I}. {P}eriodic  framework.  {\em J. Eur. Math. Soc. (JEMS)}, \textbf{7}(2005), 173--213.
 
\bibitem{BHN2} Berestycki, H; Hamel, F.; Nadirashvili, N. The speed of propagation for KPP type problems. II. General domains. J. Amer. Math. Soc., \textbf{23} (2010), 1--34.

\bibitem{BL}Berestycki, H.; Lions, P.-L. Some applications of the method of super and
  subsolutions. in \emph{Bifurcation and nonlinear eigenvalue problems (Proc.,
  Session, Univ. Paris XIII, Villetaneuse, 1978)}, \emph{Lecture Notes in
  Math.}, vol. 782, pp. 16--41, Springer, Berlin, 1980.

\bibitem{BN1} Berestycki, H.;  Nadin, G. Spreading speeds for one-dimensional monostable reaction-diffusion equations. \emph{J. Math. Phys.}, \textbf{53}  (2012).
\bibitem{BRR2}
Berestycki, H.; Roquejoffre, J.-M.; Rossi, L. The influence of a line with fast
  diffusion on {F}isher-{KPP} propagation. \emph{J. Math. Biol.} \textbf{66}
  (2013), no. 4-5, 743--766.
%\urlprefix\url{http://dx.doi.org/10.1007/s00285-012-0604-z}

\bibitem{BRR3}
Berestycki, H.; Roquejoffre, J.-M.; Rossi, L. Fisher-KPP propagation in the
  presence of a line: Further effects. \emph{Nonlinearity} \textbf{26} (2013),
  no.~9, 2623--2640.

\bibitem{user}
Crandall, M.~G.; Ishii, H.; Lions, P.-L. User's guide to viscosity solutions of
  second order partial differential equations. \emph{Bull. Amer. Math. Soc.
  (N.S.)} \textbf{27} (1992), no.~1, 1--67.

\bibitem{ES1} Evans, L. C.;  Souganidis, P. E. A PDE approach to certain large deviation problems
for systems of parabolic equations. \emph{Analyse non lin\'eaire (Perpignan, 1987). Ann. Inst.
H. Poincar\'e Anal. Non Lin\'eaire} \textbf{6}
(1989), suppl., 229--258.

\bibitem{ES2} Evans, L.C;  Souganidis, P.E. A PDE approach to geometric optics for certain semilinear
parabolic equations. \emph{Indiana Univ. Math. J.}
\textbf{38} (1989), 141--172.
% \bibitem{Plant}
% T.~Jung and M.~Blaschke.
% \newblock Phytophthora root and collar rot of alders in bavaria: distribution,
%   modes of spread and possible management strategies.
% \newblock {\em Plant Pathology}, 53(3):197--208, 2004.

\bibitem{FG} Gartner, J.;  Freidlin, M.I. The propagation of concentration waves in periodic and random media. 
\emph{Dokl. Akad. Nauk SSSR,} \textbf{249} (1979),  521?525.

\bibitem{KPP}
Kolmogorov, A.~N.; Petrovski\u{\i}, I.~G.; Piskunov, N.~S. {\'E}tude de
  l'{\'e}quation de la diffusion avec croissance de la quantit{\'e} de
  mati{\`e}re et son application {\`a} un probl{\`e}me biologique. \emph{Bull.
  Univ. Etat. Moscow Ser. Internat. Math. Mec. Sect. A} \textbf{1} (1937),
  1--26.

\bibitem{Lady}
Lady\v{z}enskaja, O.~A.; Solonnikov, V.~A.; Ural{\cprime}ceva, N.~N.
  \emph{Linear and quasilinear equations of parabolic type}, Translated from
  the Russian by S. Smith. Translations of Mathematical Monographs, Vol. 23,
  American Mathematical Society, Providence, R.I., 1967.


% \bibitem{McK} 
% H.~W. McKenzie, E.~H. Merrill, R.~J. Spiteri, and M.~A. Lewis.
% \newblock How linear features alter predator movement and the functional
% response.
% \newblock {\em Interface focus}, to appear.
% (Online version: {\em
% http://rsfs.royalsocietypublishing.org/content/2/2/205.full})
% 
% \bibitem{Sig}
% A.~Siegfried.
% \newblock {\em Itin\'eraires des contagions,
% \'epid\'emies et id\'eologies}. 
% \newblock A. Colin, Paris, 1960.


\bibitem{W02}
Weinberger, H.~F. On spreading speeds and traveling waves for growth and
  migration models in a periodic habitat. \emph{J. Math. Biol.} \textbf{45}
  (2002), no.~6, 511--548.
}

\end{thebibliography}
\end{document}

%The main results of \cite{BRR2} can be summarised as follows: 
%\begin{enumerate}
%\item Besides $(0,0)$, there is a unique stationary state of (\ref{Cauchy}) which is the constant state $(\nu/\mu, 1)$.
 %\item The population $(u,v)$ invades the whole environment, that is, for $(u_0, v_0)\not\equiv (0,0)$ bounded, $\lim_{t\to\infty} (u(t,x), v(t,x)) = (\nu/\mu, 1)$.
 %\item The invasion takes place with an {\em asymptotic speed of spreading}
%$c_*$ in the direction of the line. 
 %\item If $D>2d$ then $c_*>c_K$, or else $c_*=c_K$.
%\end{enumerate}